\documentclass[12pt,oneside]{amsart}
\usepackage{a4wide,enumerate,xcolor}
\usepackage{amsmath,graphicx,comment,enumerate}
\usepackage{tikz,geometry} 
\usepackage{amssymb} 
\allowdisplaybreaks

\let\pa\partial
\let\na\nabla
\let\eps\varepsilon
\newcommand{\N}{{\mathbb N}}
\newcommand{\R}{{\mathbb R}}
\newcommand{\diver}{\operatorname{div}}

\newcommand{\E}{\mathcal{E}}
\newcommand{\F}{\mathcal{F}}
\newcommand{\T}{\mathcal{T}}
\newcommand{\m}{\mathrm{m}}
\newcommand{\D}{\mathrm{D}}
\newcommand{\dist}{\mathrm{d}}

\newtheorem{theorem}{Theorem}
\newtheorem{lemma}[theorem]{Lemma}
\newtheorem{proposition}[theorem]{Proposition}

\newtheorem{definition}{Definition}

\begin{document}

\title[Scharfetter--Gummel scheme for drift--diffusion models]{
A convergent Scharfetter--Gummel scheme \\
for a three-Species drift--diffusion model \\
for memristors}

\author[A. J\"ungel]{Ansgar J\"ungel}
\address{Institute of Analysis and Scientific Computing, TU Wien, Wiedner Hauptstra\ss e 8--10, 1040 Wien, Austria}
\email{juengel@tuwien.ac.at} 

\author[Z. Sun]{Zhiwei Sun}
\address{Institute of Analysis and Scientific Computing, TU Wien, Wiedner Hauptstra\ss e 8--10, 1040 Wien, Austria}
\email{zhiwei.sun@tuwien.ac.at} 

\author[S. Xhahysa]{Sara Xhahysa}
\address{Institute of Analysis and Scientific Computing, TU Wien, Wiedner Hauptstra\ss e 8--10, 1040 Wien, Austria}
\email{sara.xhahysa@tuwien.ac.at} 

\date{\today}

\thanks{The authors acknowledge partial support from the Austrian Science Fund (FWF), grant 10.55776/PAT2687825, and from the Austrian Federal Ministry for Women, Science and Research and implemented by \"OAD, grant MULT09/2025. This work has received funding from the European Research Council (ERC) under the European Union's Horizon 2020 research and innovation programme, ERC Advanced Grant NEUROMORPH, no.~101018153. For open-access purposes, the authors have applied a CC BY public copyright license to any author-accepted manuscript version arising from this submission.} 

\begin{abstract}\sloppy
A structure-preserving fully implicit Scharfetter--Gummel finite-volume scheme for a three-species drift--diffusion model for semiconductors is proposed and analyzed. The equations describe the evolution of the electron, hole, and oxygen vacancy densities in a (bounded) memristor device, coupled to the Poisson equation for the electric potential, with mixed-type boundary conditions. Recasting the Scharfetter--Gummel fluxes in an upwind form, a hidden Fisher information component is revealed. Owing to the degeneracy of the Bernoulli function appearing in the fluxes, additional edgewise coercivity estimates are required, leading to refined local and global dissipation estimates. Using these ideas, the existence of a discrete finite-volume solution, a discrete free energy inequality, and the convergence of the numerical scheme are established. Numerical simulations in two space dimensions confirm the structure-preserving properties of the scheme and illustrate the filament formation in a memristor device.
\end{abstract}

\keywords{Drift--diffusion equations; Scharfetter--Gummel scheme; finite-volume method; structure preservation; Fisher information; semiconductors; memristors; neuromorphic computing.}  
 
\subjclass[2000]{35K51, 65M08, 65M12, 35Q81.}

\maketitle


\section{Introduction}

Memristor devices have emerged as promising candidates for next-generation nonvolatile memory and neuromorphic computing architectures due to their nanoscale size, low power consumption, and intrinsic resistive switching behavior \cite{XJZKJWY23}. Drift-diffusion models provide a physically grounded framework for capturing the interplay between carrier transport, electrostatic effects, and resistive state evolution \cite{GSTD13}. The convection-dominated character of the governing equations often leads to severe numerical instabilities when standard discretization techniques are employed. To address these challenges, we employ and analyze a finite-volume discretization based on the Scharfetter--Gummel scheme \cite{ScGu69,SBW09}. The scheme preserves mass under no-flux boundary conditions, positivity, and free-energy dissipation, even in drift-dominated regimes, and is shown to be stable and convergent. The convergence proof combines a refined dissipation structure of the Scharfetter--Gummel flux with two complementary coercivity estimates, yielding the local and global Fisher-information bounds required for compactness.

\subsection{Model equations}

The scaled equations for the electron density $N$, hole density $P$, oxygen vacancy density $Q$, and electric potential $V$ are given by
\begin{equation}\label{1.eq}
\left\{\begin{aligned}
  &\pa_t N - \diver J_N = 0, \quad J_N = \na N-N\na V,  \\
  &\pa_t P - \diver J_P = 0, \quad J_P = \na P+P\na V,  \\
  &\pa_t Q - \diver J_Q = 0, \quad J_Q = \na Q+Q\na V,  \\
  &\lambda^2\Delta V = N-P-Q + A(x)\quad\mbox{in }\Omega,\ t>0,
\end{aligned}\right.
\end{equation}
where $\Omega\subset\R^d$ ($d\ge 1$) is a bounded domain with boundary $\pa\Omega=\Gamma_D\cup\Gamma_N$, supplemented with initial and mixed boundary conditions:
\begin{equation}\label{1.bic}
\left\{\begin{aligned}
  N(0,\cdot)=N^{\rm in}, \quad P(0,\cdot)=P^{\rm in}, \quad
  Q(0,\cdot)=Q^{\rm in} &\quad\mbox{in }\Omega, \\
  N=N^D, \quad P=P^D, \quad V=V^D &\quad\mbox{on }\Gamma_D,\ t>0, \\
  J_N\cdot\nu = J_P\cdot\nu = \na V\cdot\nu = 0 &\quad\mbox{on }
  \Gamma_N,\ t>0, \\
  J_Q\cdot\nu = 0 &\quad\mbox{on }\pa\Omega,\ t>0.
\end{aligned}\right.
\end{equation}
The parameter $\lambda>0$ denotes the scaled Debye length, $A(x)$ is the given immobile acceptor density, and $J_N$, $J_P$, $J_Q$ are the fluxes of the electrons, holes, oxygen vacancies, respectively. We prescribe the electron and holes densities, as well as the electric voltage at the Ohmic contacts $\Gamma_D$, while $\Gamma_N$ represents the union of insulating boundary parts. The Dirichlet conditions for the carrier densities are derived under the assumptions of charge neutrality and thermal equilibrium \cite[Sec.~5.1]{Sel84}. The boundary is assumed to be impermeable to oxygen vacancies; accordingly, we impose no-flux boundary conditions on $Q$. 

\subsection{Main challenges}

A distinctive feature of \eqref{1.eq}--\eqref{1.bic} is that the third species $Q$ satisfies a no-flux condition on the whole boundary, whereas
$N$ and $P$ solve mixed boundary conditions. This mixed-type boundary structure is one of the key sources of analytical and numerical difficulties in the present problem, destroying the monotonicity structure of the drift term (see, e.g., \cite[(1.9)]{JJZ23}). For the existence analysis for \eqref{1.eq}--\eqref{1.bic}, these issues were overcome in \cite{JJZ23} by exploiting the free energy structure and by combining the local strong convergence of the densities and global weak convergence arguments. More precisely, the free energy (or entropy)
\begin{align*}
  E &= \int_\Omega\bigg\{N\bigg(\log\frac{N}{N^D}-1\bigg)
  + P\bigg(\log\frac{P}{P^D}-1\bigg)
  + Q(\log Q+V^D-1) \\
  &\phantom{xx}+ \frac{\lambda^2}{2} |\na(V-V^D)|^2\bigg\}dx,
\end{align*}
consisting of the internal energies of the species and the electric energy, satisfies the free energy (or entropy) inequality
\begin{align}\label{1.ei}
  \frac{dE}{dt} &+ \frac12\int_\Omega\bigg(|2\na\sqrt{N}-\sqrt{N}\na V|^2
  + |2\na\sqrt{P}+\sqrt{P}\na V|^2 \\
  &+ |2\na\sqrt{Q}+\sqrt{Q}\na V|^2
  \bigg)dx \le C, \nonumber 
\end{align}
where $C>0$ depends on the boundary data. The expression $|2\na\sqrt{U}\pm\sqrt{U}\na V|^2 = U|\na(\log U\pm V)|^2$ with $U=N,P,Q$ is called dissipation. The energy structure yields bounds for the Fisher information $|\na\sqrt{U}|^2$, which is a key bound for the existence analysis. 

Let $\mathcal{T}$ be a triangulation of the domain $\Omega$ consisting of cells $K\in\mathcal{T}$,  $(N_K)_{K\in\T}$ and $(V_K)_{K\in\T}$ be cellwise constant approximations of $N$ and $V$, respectively, and let $\sigma=K|L$ be an edge or face between two cells $K$ and $L$. Then the classical Scharfetter--Gummel discretization of the electron flux $J_N$ is defined by
\begin{align*}
  \F_{K,\sigma} = \tau_\sigma\big(B(V_L-V_K)N_L - B(-(V_L-V_K))N_K\big),
\end{align*}
where $B(s)=s/(\exp(s)-1)$ for $s\neq 0$, $B(0)=1$ is the Bernoulli function and $\tau_\sigma$ is the transmissibility coefficient defined in \eqref{2.tau} below. This discretization balances drift and diffusion at the discrete level while preserving the entropy structure. The scheme was originally proposed for semiconductor equations \cite{ScGu69}, subsequently extended to multiple space dimensions \cite{FaGa91}, and later incorporated into the finite-volume framework \cite{Bes12,ChDr11}. More recently, it has been generalized to anisotropic convection--diffusion equations \cite{Que22}, and has been interpreted within the framework of generalized gradient flows \cite{HST24}. In the context of memristor models, a related finite-volume discretization was analyzed in \cite{abdel25}. The authors prove the existence of discrete solutions and entropy stability, but the convergence of the scheme has not been addressed.

The convergence analysis of Scharfetter--Gummel schemes is considerably more delicate than that of standard finite-volume discretizations in the presence of nonlinear drift terms. The main difficulty stems from the fact that the Bernoulli function depends exponentially on the discrete potential difference, rendering the flux control highly sensitive to the regularity of the drift potential. Consequently, early analytical results for Scharfetter--Gummel-type schemes were restricted to settings involving nonlinear diffusion coupled with linear drift, where the Bernoulli factor does not interact with a nonlinear potential \cite{Bes12}. 

When the drift is self-consistent and nonlinear, the analysis becomes more subtle. In \cite{CCFG21}, convergence arguments available for centred and Sedan fluxes could not be extended to the Scharfetter--Gummel flux in a degenerate drift--diffusion--Poisson model. The analysis in \cite{ScSe22} relies on Lipschitz regularity of the interaction potential, while \cite{CHM26} uses uniform bounds on the discrete densities and the resulting control of the Poisson potential. A different approach was developed in \cite{JLS26}, where an upwind representation of generalized Scharfetter--Gummel fluxes reveals entropy structures suitable for nonlocal cross-diffusion systems.

None of these mechanisms is available for the system considered here. The Poisson coupling leads to low-regularity solutions, while the asymmetric mixed-type boundary conditions significantly complicate the estimation of energy dissipation. In particular, the maximum-principle estimates and monotonicity arguments available for related two-species models with compatible boundary conditions cannot be used. 
Most importantly, although the standard Scharfetter--Gummel entropy production is nonnegative, it does not directly provide coercive estimates on the discrete square-root gradients needed to establish compactness. Overcoming this lack of coercivity is the main analytical challenge addressed in the present work.

\subsection{Main ideas and results}

In our setting, the only robust structure that remains available is the free energy inequality, showing that the entropy production $|2\na\sqrt{U}\pm \sqrt{U}\na V|^2$ ($U=N,P,Q$) and the electric energy density $|\na V|^2$ are uniformly bounded in $L^1(\Omega)$. In the continuous model, a uniform bound for $\na\sqrt{U}$ in $L^1(\Omega_T)$ was deduced in \cite{JJZ23} from entropy dissipation and the triangle inequality. Moreover, away from the boundary, a local bound for $\na\sqrt{U}$ in $L^2_{\rm loc}(\Omega)$ was derived.

Our numerical analysis follows the same principle. The difficulty is that, for the Scharfet\-ter--Gummel flux, the analog of the coercive square structure $|2\nabla \sqrt U \pm \sqrt U \nabla V|^2$ is not directly available. More precisely, the standard Scharfetter--Gummel entropy production is given by
\begin{align*}
  \mathcal P_{K,\sigma}[U,V] = \tau_\sigma\widetilde{U}_\sigma
  \big(\log U_L-\log U_K + V_L-V_K\big)^2,
\end{align*}
where the value $\widetilde{U}_\sigma$ on the edge $\sigma$ is defined in \eqref{2.flux2} below. While this expression is nonnegative, it is not coercive with respect to $\sqrt{U_L}-\sqrt{U_K}$. As a consequence, the classical dissipation structure does not yield the control of the discrete square-root gradient, or equivalently of the Fisher information, needed for the compactness argument.

The main novelty of the present work is the derivation of a refined dissipation structure for the Scharfetter--Gummel flux in the low-regularity Poisson-coupling framework. Rewriting the Scharfetter--Gummel flux in an upwind form (see Lemma \ref{lem.upwind}), we recover a square identity of the form
\begin{align*}
\mathcal P_{K,\sigma}[U,V]
= \tau_\sigma^*
\big[
B(|V_L-V_K|)(\sqrt{U_L}-\sqrt{U_K})
+ \cdots
\big]^2 ,
\end{align*}
where the omitted term contains the corresponding upwind drift contribution and where $\tau_\sigma^*$ is uniformly comparable to the transmissibility $\tau_\sigma$. This identity provides the missing link between the classical Scharfetter--Gummel free-energy dissipation and a discrete Fisher-information component, namely the square-root difference $\sqrt{U_L}-\sqrt{U_K}$.

However, this square identity alone is not sufficient. The Bernoulli factor $B(|V_L-V_K|)$ degenerates when the discrete potential difference becomes large, so that it does not directly yield a uniform estimate for the square-root gradient. To overcome this difficulty, we derive two complementary edgewise coercivity estimates. The first estimate is stronger, but contains a cross term involving the density and potential differences; it is combined with the discrete Poisson equation with a localized argument to obtain interior Fisher-information bounds. The second estimate is weaker but free of this cross term; it yields the global $L^1$-control of the square-root gradients needed for compactness and for recovering the Dirichlet traces. 
Both coercivity estimates are established at the level of the Scharfetter--Gummel flux for a general drift--diffusion equation.

Under natural assumptions on the initial data, boundary data, and doping profile, we prove the existence of positive discrete solutions (Theorem \ref{thm.ex}) and establish a discrete free-energy inequality (Proposition \ref{prop.dei}). We then show that, along a sub-sequence of admissible meshes with vanishing mesh size, the scheme converges to a weak solution of the three-species drift--diffusion system. More precisely, the reconstructed densities converge strongly in $L^1(\Omega_T)$, the discrete gradients of the square roots converge weakly in $L^1(\Omega_T;\R^d)$, and the potentials converge weakly* in $L^\infty(0,T;H^1(\Omega))$. The Dirichlet boundary conditions for the carrier densities are recovered at the level of the square-root variables (Theorem \ref{thm.conv}).

The paper is organized as follows. The finite-volume scheme and the main results are presented in Section \ref{sec.main}. Section \ref{sec.SG} reveals the refined dissipation structure of the Scharfetter--Gummel flux with the main coercivity estimates in Propositions \ref{prop.first} and \ref{prop.second}. The existence of a discrete solution as well as the global bounds from the discrete free energy inequality are proved in Section \ref{sec.ex}. We perform the localized coercivity analysis and derive the interior estimates for the Fisher information in Section \ref{sec.chi}. Section \ref{sec.comp} contains the compactness and convergence arguments to prove the convergence of the scheme. Some numerical experiments are presented in Section \ref{sec.num}. Finally, we prove two auxiliary results in Appendix \ref{sec.app}.


\section{Finite-volume scheme and main results}\label{sec.main}

We introduce the notation needed for the finite-volume method, detail our finite-volume scheme, and present the main results of the paper.

\subsection{Notations}

The domain $\Omega$ is discretized by an admissible triangulation in the sense of \cite[Definition 9.1]{EGH00}. The triangulation consists of a family $\T$ of open polygonal convex subsets of $\Omega$ (cells or control volumes), a family $\E$ of edges (or faces in three space dimensions), and a family of points $\mathcal{X}=(x_K)_{K\in\T}$ associated with the cells. The admissibility assumption implies that the line segment connecting the points $x_K$ and $x_L$ of two neighboring cells $K$ and $L$ is orthogonal to their common edge $\sigma=K|L$. 

The family of edges $\E$ is split into internal and external edges $\E=\E_{\rm int}\cup\E_{\rm ext}$, where $\E_{\rm int} = \{\sigma\in\E:\sigma\subset\Omega\}$ and $\E_{\rm ext} = \{\sigma\in\E:\sigma\subset\pa\Omega\}$. Each exterior edge is assumed to be an element of either the Dirichlet or Neumann boundary, setting $\E_{\rm ext} = \E_{\rm ext}^D\cup\E_{\rm ext}^N$. For given $K\in\T$, we define the set $\E_K$ of the edges of $K$, which consists of internal edges and possibly of edges on the Dirichlet or Neumann boundary, $\E_K= \E_{{\rm int},K}\cup\E_{{\rm ext},K}^D\cup\E_{{\rm ext},K}^N$. 

With the Euclidean distance $d(\cdot,\cdot)$, we introduce for any $\sigma\in\E$ the distance
\begin{align*}
  \dist_\sigma = \begin{cases}
  d(x_K,x_L) &\mbox{if }\sigma=K|L\in\E_{\rm int}, \\
  d(x_K,\sigma) &\mbox{if }\sigma\in\E_{\rm ext}.
  \end{cases}
\end{align*}
The transmissibility coefficient of the edge $\sigma$ is defined by
\begin{align}\label{2.tau}
  \tau_\sigma = \frac{\m(\sigma)}{\rm{d}_\sigma},
\end{align}
where $\m(\sigma)$ denotes the Lebesgue measure of $\sigma$. We assume that the meshes are uniformly regular in the sense that there exist $\xi_1$, $\xi_2>0$, independent of the mesh, such that
\begin{align}\label{2.meshreg}
  d(x_K,\sigma)\ge \xi_1\dist_\sigma, \quad
  \dist_\sigma\ge \xi_2\max_{\sigma'\in\E_K}\dist_{\sigma'}.
\end{align}
The first inequality is used to derive uniform refined-flux bounds (Lemma \ref{lem.flux}), and both inequalities are needed to estimate the cross terms (Lemma \ref{lem.second}) and the local square-root gradients (Lemma \ref{lem.grad}).

We use a uniform time discretization with time step $\Delta t>0$, and we set $t_k=k\Delta t$ for $k=1,\ldots,N_T$, where $T>0$, $N_T\in\N$, and $\Delta t=T/N_T$. We denote by $\mathcal{M}$ an admissible space--time discretization of $(0,T)\times\Omega$ composed of an admissible mesh $(\T,\E,\mathcal{X})$ and the values $(\Delta t,\Delta x)$, where $\Delta x = \max_{K\in\T}\mbox{diam}(K)$. The size of $\mathcal{M}$ is given by $\eta:=\max\{\Delta t,\Delta x\}$.

We define for $v\in\R^{\#\mathcal T}$ and $\sigma\in\E$ the difference operators
\begin{align*}
  \D_{K,\sigma}v = v_{K,\sigma}-v_K, \quad\mbox{where}\quad
  v_{K,\sigma} = \begin{cases}
  v_L &\mbox{if }\sigma=K|L\in\E_{{\rm int},K}, \\
  v_\sigma &\mbox{if }\sigma\in\E_{{\rm ext},K}.
  \end{cases}
\end{align*}
The value $v_\sigma$ equals $\m(\sigma)^{-1}\int_\sigma vds$ if $\sigma\in\E_{{\rm ext},K}^D$, while $v_\sigma$ is determined from the discrete no-flux condition $\F_{K,\sigma}=0$ if $\sigma\in\E_{{\rm ext},K}^N$, where $\F_{K,\sigma}$ is the numerical flux; see Section \ref{sec.flux} for details. In standard problems with mixed boundary conditions, the value $v_{K,\sigma}$ is defined as $v_K$ if $\sigma\in\E_{{\rm ext},K}^N$, but here the fact that different boundary conditions are imposed for $(N,P,V)$ and $Q$ requires a slightly different notation. 

We introduce the discrete $H^1(\Omega)$ seminorm and discrete $H^1(\Omega)$ norm as
\begin{align*}
  |v|_{1,2,\T} = \bigg(\sum_{\sigma\in\E}\tau_\sigma
  |\D_{K,\sigma}v|^2\bigg)^{1/2}, \quad
  \|v\|_{1,2,\T} = \big(\|v\|_{0,2,\T}^2 + |v|_{1,2,\T}^2\big)^{1/2},
\end{align*}
respectively, where the $L^p(\Omega)$ norm for $1\le p<\infty$ is given by
\begin{align*}
  \|v\|_{0,p,\T} = \bigg(\sum_{K\in\T}\m(K)|v_K|^p\bigg)^{1/p}.
\end{align*}
For the numerical flux $\F_{K,\sigma}$, the discrete integration-by-parts formula becomes 
\begin{align}\label{2.dibp}
  \sum_{K\in\T}\sum_{\sigma\in\E_K}\F_{K,\sigma}v_K
  = -\sum_{\sigma=K|L\in\E}\F_{K,\sigma}\D_{K,\sigma}v
  + \sum_{\sigma\in\E_{\rm ext}}\F_{K,\sigma}v_{\sigma}
  \quad\mbox{for }v\in \R^{\#\mathcal T}.
\end{align}

For the convergence result, we need a dual mesh $\T^*$ of $\Omega$. We associate with $K\in\T$ and $\sigma\in\E_K$ a dual cell $\Delta_\sigma\in\T^*$, defined by
\begin{itemize}
\item ``Diamond'': For $\sigma=K|L$, $\Delta_\sigma$ is the interior of the convex hull of $\sigma\cup\{x_K,x_L\}$. 
\item ``Triangle'': For $\sigma\in\E_{{\rm ext},K}$, the cell $\Delta_\sigma$ is the interior of the convex hull of $\sigma\cup\{x_K\}$. 
\end{itemize}
The cells $\Delta_\sigma$ define a partition of $\Omega$. It follows from the property that the line segment $\overline{x_Kx_L}$ between two neighboring centers of cells is orthogonal to the edge $\sigma=K|L$ that
\begin{align}\label{2.mdelta}
  d\m(\Delta_\sigma) = \m(\sigma)\dist_\sigma
  \quad\mbox{for }\sigma\in\E.
\end{align}
The approximate gradient of a function $v\in V_{\Delta x}$ is defined by
\begin{align}\label{2.grad}
  \na_\sigma^{\Delta x}v(x) = d\frac{\D_{K,\sigma}v}{\dist_\sigma}
  \nu_{K,\sigma}\quad\mbox{for }x\in\Delta_\sigma,
\end{align}
where $\nu_{K,\sigma}$ is the unit normal vector to $\sigma$ pointing outward of $K$. 

Finally, we introduce the following reconstruction operators. Let $u=(u_K)_{K\in\T}$ and $v=(v_\sigma)_{\sigma\in\E}$ be given. Then we define
\begin{align*}
  \pi_\eta u(t,x) = u_K 
  &\quad\mbox{for }(t,x)\in (t_{k-1},t_k]\times K, \\
  \pi_\eta^* v(t,x) = v_\sigma 
  &\quad\mbox{for }(t,x)\in(t_{k-1},t_k]\times\Delta_\sigma.
\end{align*}


\subsection{Fully implicit finite-volume scheme}

We define the discrete and boundary data by
\begin{align*}
  U^0_K := \frac{1}{\m(K)}\int_K U^{\rm in}(x)dx
  &\quad\mbox{for }U=N,P,Q, \\
  U_\sigma^D := \frac{1}{\m(\sigma)}\int_\sigma U^Dds, \quad
  U_K^D := \frac{1}{\m(K)}\int_K U^D(x) dx
  &\quad\mbox{for }U=N,P,V
\end{align*}
and $K\in\T$, $\sigma\in\E$, as well as the doping profile averages
\begin{align*}
  A_K := \frac{1}{\m(K)}\int_K A(x)dx \quad\mbox{for }K\in\T. 
\end{align*}
Given the cellwise values $N^{k-1}=(N_K^{k-1})_{K\in\T}$, $P^{k-1}=(P_K^{k-1})_{K\in\T}$, $Q^{k-1}=(Q_K^{k-1})_{K\in\T}\in \R^{\#\mathcal T}$, we seek $(N,P,Q,V)\in (\R^{\#\mathcal T})^4$, solving the discrete drift--diffusion equations
\begin{equation}\label{3.NPQ}
\left\{\begin{aligned}
  \frac{\m(K)}{\Delta t}(N_K^k-N_K^{k-1})
  - \sum_{\sigma\in\E_K}\F_{K,\sigma}[N^k,-V^k] &= 0, \\
  \frac{\m(K)}{\Delta t}(P_K^k-P_K^{k-1})
  - \sum_{\sigma\in\E_K}\F_{K,\sigma}[P^k,V^k] &= 0, \\
  \frac{\m(K)}{\Delta t}(Q_K^k-Q_K^{k-1})
  - \sum_{\sigma\in\E_K}\F_{K,\sigma}[Q^k,V^k] &= 0
\end{aligned}\right.
\end{equation}
for $K\in\T$ and $k=1,\ldots,N_T$. The Scharfetter--Gummel fluxes are defined by
\begin{equation}\label{3.F}
\left\{\begin{aligned}
  \F_{K,\sigma}[N^k,-V^k] &= \tau_\sigma\big(B(\D_{K,\sigma}V^k)
  N_{K,\sigma}^k - B(-\D_{K,\sigma}V^k)N_K^k\big), \\
  \F_{K,\sigma}[P^k,V^k] &= \tau_\sigma\big(B(-\D_{K,\sigma}V^k)
  P_{K,\sigma}^k - B(\D_{K,\sigma}V^k)P_K^k\big), \\
  \F_{K,\sigma}[Q^k,V^k] &= \tau_\sigma\big(B(-\D_{K,\sigma}V^k)
  Q_{K,\sigma}^k - B(\D_{K,\sigma}V^k)Q_K^k\big)
  \quad\mbox{for }\sigma\in\E_k,
\end{aligned}\right.
\end{equation}
recalling the definition $B(s)=s/(e^s-1)$ for $s\neq 0$ and $B(0)=1$ of the Bernoulli function. 
The discrete electric potential is determined by
\begin{align}\label{3.V}
  \lambda^2\sum_{\sigma\in\E_K}\tau_\sigma\D_{K,\sigma}V^k
  = \m(K)(N_K^k-P_K^k-Q_K^k+A_K)
\end{align}
for $K\in\T$, $k=1,\ldots,N_T$. The boundary edge values are given by the mixed-type boundary conditions
\begin{equation}\label{3.bc}
\left\{\begin{aligned}
  N_\sigma^k = N_\sigma^D, \quad P_\sigma^k = P_\sigma^D, \quad
  V_\sigma^k = V_\sigma &\quad\mbox{for }\sigma\in\E_{\rm ext}^D, \\
  \F_{K,\sigma}[N^k,-V^k] = \F_{K,\sigma}[P^k,V^k] = 0, \quad
  V_{K,\sigma}^k = V_K^k
  &\quad\mbox{for }\sigma\in\E_{{\rm ext},K}^N,\ K\in\T, \\
  \F_{K,\sigma}[Q^k,V^k] = 0 
  &\quad\mbox{for }\sigma\in\E_{{\rm ext},K},\ K\in\T.
\end{aligned}\right.
\end{equation}


\subsection{Main results}

We impose the following hypotheses:

\begin{itemize}
\item[\bf (H1)] Boundary: $\pa\Omega=\Gamma_D\cup\Gamma_N$ is Lipschitz, $\Gamma_D\cap\Gamma_N=\emptyset$, $\m(\Gamma_D)>0$, and $\Gamma_N$ is open in $\pa\Omega$. 
\item[\bf (H2)] Initial data: $N^{\rm in}$, $P^{\rm in}$, $Q^{\rm in}\in L\log L(\Omega)$ are nonnegative and not identically zero. 
\item[\bf (H3)] Boundary data: $N^D$, $P^D\ge 0$, $V^D\in\R$ on $\Gamma_D$ admit extensions to $\Omega\cup\Gamma_D$, still denoted by $N^D$, $P^D$, $V^D$, satisfying $N^D$, $P^D>0$ in $\Omega$ and $\log N^D-V^D$, $\log P^D+V^D\in W^{1,\infty}(\Omega)$.
\item[\bf (H4)] Doping profile: $A\in L^2(\Omega)$.
\end{itemize}
The condition that none of the initial functions are identically zero in 
Hypothesis (H2) guarantees that the discrete solutions $(N_K,P_K,Q_K)$ are strictly positive for all $K\in\T$ and $k\ge 1$ (Lemma \ref{lem.pos}). The strict positivity allows us to use the logarithmic components $\log N_K$, $\log P_K$, and $\log Q_K$ of the entropy variables directly in the proof of the discrete entropy inequality (Proposition \ref{prop.dei}) without any regularization as needed in \cite{JJZ23}.  

First, we state an existence result for the discrete nonlinear system.

\begin{theorem}[Existence and positivity]\label{thm.ex}
Let Hypotheses (H1)--(H4) hold. For $k\in\{1,\ldots,$ $N_T\}$, system \eqref{3.NPQ}--\eqref{3.bc} admits a solution $(N^k,P^k,Q^k,V^k)\in (\R^{\#\mathcal T})^4$ satisfying
\begin{align*}
  N_K^k>0,\ P_K^k>0,\ Q_K^k>0 \quad\mbox{for }K\in\T,\ k=1,\ldots,N_T.
\end{align*}
\end{theorem}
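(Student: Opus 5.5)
We argue by induction on $k$ and use a topological fixed-point argument (Leray--Schauder / Brouwer degree) built on a regularized free energy estimate. Fix $k$ and assume $(N^{k-1},P^{k-1},Q^{k-1})$ are given, nonnegative, and (for $k\ge 2$) strictly positive.

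\textbf{Set-up of the map.} We work in the entropy variables
\begin{align*}
  w_K^N=\log(N_K/N_K^D)-V_K+V_K^D, \quad w_K^P=\log(P_K/P_K^D)+V_K-V_K^D, \quad w_K^Q=\log Q_K+V_K,
\end{align*}
so the densities are automatically positive. Given $\bar w=(\bar w^N,\bar w^P,\bar w^Q)\in(\R^{\#\T})^3$ and $\rho\in[0,1]$, we proceed in three steps.
\begin{itemize}
\item[(i)] Solve the discrete Poisson equation \eqref{3.V} with right-hand side built from the densities determined by $\bar w$. This is a nonlinear monotone problem in $V$, since $N=N^De^{\bar w^N+V-V^D}$ is increasing in $V$ and $P,Q$ are decreasing. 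We obtain a unique $V$ by a strictly convex minimization, which is coercive thanks to $\m(\Gamma_D)>0$ and the discrete Poincar\'e inequality. The solution depends continuously on $\bar w$.
\item[(ii)] Define $w$ as the solution of the linear problem
\begin{align*}
  \eps\Big(\textstyle\sum_{\sigma}\tau_\sigma \D_{K,\sigma}w\,\cdot + \m(K)w_K\Big) = \rho\,\big(\text{residual of \eqref{3.NPQ}}\big),
\end{align*}
where $\eps>0$ is a small regularization and the residual is evaluated at $(\bar w,V)$.
\item[(iii)] Fixed points for $\rho=1$ solve the $\eps$-regularized scheme. We then let $\eps\to0$ using $\eps$-uniform bounds.
\end{itemize}
Since the dimension is finite, continuity is immediate from the continuity of $B$.

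\textbf{A priori estimate.} This is the core step. Testing the scheme with $w$ and using the discrete integration by parts \eqref{2.dibp} gives three contributions.
\begin{itemize}
\item The time differences are bounded below by the difference of the discrete free energies, by convexity of $s(\log s-1)$ and of the quadratic electric energy, the latter via \eqref{3.V}.
\item The flux terms produce the nonnegative Scharfetter--Gummel dissipation $\mathcal P_{K,\sigma}\ge0$.
\item The Dirichlet boundary edges give only data-dependent terms. These are controlled using (H3) because $\log N^D-V^D$ and $\log P^D+V^D$ are Lipschitz.
\end{itemize}
On Neumann edges the fluxes vanish. The $Q$ equation has no boundary flux, and we use $\sum_K\m(K)Q_K^k=\sum_K\m(K)Q_K^{k-1}$. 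The result is $\eps\|w\|_{1,2,\T}^2\le C(\Delta t,\text{data},k)$ uniformly in $\rho$, which yields the Leray--Schauder bound and hence a fixed point. The $\eps$-independent part bounds the free energy, hence $N_K,P_K,Q_K$ from above and $|V|_{1,2,\T}$. Passing $\eps\to0$ along a subsequence in finite dimensions then gives a solution with $N_K,P_K,Q_K\ge0$.

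\textbf{Strict positivity.} We expect this to be the main obstacle, because the limit $\eps\to0$ may lose positivity and we need a lower bound that is not uniform. Suppose $N_K^k=0$ for some $K$. The equation at $K$ reads
\begin{align*}
  \frac{\m(K)}{\Delta t}\big(0-N_K^{k-1}\big)=\sum_\sigma\tau_\sigma B(\D_{K,\sigma}V)\,N^k_{K,\sigma}\ge0.
\end{align*}
Since $B>0$, this forces $N_K^{k-1}=0$ and $N_{K,\sigma}^k=0$ for all neighbours, including Dirichlet values. Propagating through the connected mesh gives $N^k\equiv0$, which contradicts $N_\sigma^D>0$ on Dirichlet edges. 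For $N,P$ this also contradicts the initial data not being identically zero. For $Q$, which has only no-flux boundaries, the same propagation gives $Q^k\equiv0$ and $Q^{k-1}\equiv0$, contradicting the conserved mass $\sum_K\m(K)Q^0_K>0$ from (H2). This is where (H2) is used, as the paper announces via Lemma \ref{lem.pos}.

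If the degree argument turns out to be delicate, the alternative is to carry out the fixed point directly on $(N,P,Q)$ truncated at $0$, with $V$ solving the linear Poisson equation \eqref{3.V}. Nonnegativity then follows from a discrete minimum principle using $B>0$, positivity follows from the propagation argument above, and the bounds come from the mass and free energy.
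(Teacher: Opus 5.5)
Your argument is correct, but it takes a genuinely different route from the paper.

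The paper decouples at the level of the densities. Given $X^*=(N^*,P^*,Q^*)$, it solves the \emph{linear} discrete Poisson problem for $V^*$. It then solves the three \emph{linear} Scharfetter--Gummel problems with the potential frozen. Their unique solvability and strict positivity are exactly Lemma \ref{lem.pos}, where positivity comes from the same propagation argument you use. Schaefer's theorem then gives a fixed point, with the bound on $\{X=\theta S(X)\}$ taken from the free energy estimate. Positivity is thus built into every evaluation of the map, and no regularization or limit passage is needed.

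Your route is the boundedness-by-entropy method. The entropy variables make the densities positive by construction, and the Poisson step becomes a strictly monotone nonlinear problem. Because $\rho$ multiplies the whole residual, the free energy computation holds verbatim along the entire homotopy. In the paper's Schaefer argument this step is less transparent. For $\theta<1$ one first has to notice that, by linearity of the flux in the density at fixed potential, $X=\theta S(X)$ is the scheme with $u^{k-1}$ and the Dirichlet data multiplied by $\theta$. Only then does Proposition \ref{prop.dei} apply.

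What you pay is the extra limit $\eps\to0$ and the a posteriori recovery of strict positivity, both of which you handle correctly. The limit is fine because $\eps\|w^\eps\|_{1,2,\T}^2\le C$ gives $\eps w^\eps\to0$ in finite dimensions, so the regularization disappears.

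Two details in your a priori step should be adjusted.

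First, the data terms $\F_{K,\sigma}\D_{K,\sigma}(\log N^D-V^D)$ and $\F_{K,\sigma}\D_{K,\sigma}(\log P^D+V^D)$ occur on every edge, not only on Dirichlet edges, because the chemical potentials are taken relative to the interior extensions. After Young's inequality they leave $C\sum_K\m(K)(N_K+P_K)$. You must absorb this into $\Delta t^{-1}\mathcal H$ using the superlinearity $s\log(s/a)-s+a\ge\lambda s-a(e^\lambda-1)$.

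Second, the mass of $Q$ is not conserved exactly for $\eps>0$, since the term $\eps\m(K)w^Q_K$ does not sum to zero. You do not need this conservation, however: because $\log Q^D+V^D=0$, the $Q$ equation produces no data term at all.
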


For two cellwise positive functions $u=(u_K)_{K\in\T}$ and $v=(v_K)_{K\in\T}$, the discrete relative entropy is defined by
\begin{align}\label{3.H}
  \mathcal H(u|v) := \sum_{K\in\T}\m(K)\bigg(u_K\log\frac{u_K}{v_K}
  - u_K + v_K\bigg)\ge 0.
\end{align}
The discrete electric energy is given by
\begin{align}\label{3.G}
  \mathcal G(V^k;V^D) := \frac{\lambda^2}{2}\sum_{K\in\T}\sum_{\sigma\in\E_K}
  \tau_\sigma|\D_{K,\sigma}(V^k-V^D)|^2,
\end{align}
and the discrete free energy equals
\begin{align}\label{3.E}
  \mathcal E_{\rm free}^k := \mathcal H(N^k|N^D) + \mathcal H(P^k|P^D)
  + \mathcal H(Q^k|Q^D) + \mathcal G(V^k;V^D),
\end{align}
where $Q^D=(Q_K^D)_{K\in\T}$ with $Q_K^D:=\exp(-V_K^D)$ for $K\in\T$. Furthermore, with a potential $\phi=(\phi_K)_{K\in\T}$, we introduce the discrete free energy dissipation
\begin{align}\label{3.D}
  \mathcal D[u,\phi] := \sum_{\sigma=K|L\in\E}
  \mathcal P_{K,\sigma}[u,\phi] = \sum_{\sigma=K|L\in\E}
  \F_{K,\sigma}[u,\phi](\D_{K,\sigma}\log u+\D_{K,\sigma}\phi).
\end{align}

\begin{proposition}[Discrete free energy inequality]\label{prop.dei}
Let $(N^k,P^k,Q^k,V^k)_{k=0,\ldots,N_T}$ be the solution to \eqref{3.NPQ}--\eqref{3.bc} constructed in Theorem \ref{thm.ex}. Then, for any $k=1,\ldots,N_T$,
\begin{align*}
  \frac{1}{\Delta t}(\mathcal E_{\rm free}^k 
  - \mathcal E^{k-1}_{\rm free})
  &+ \frac12 \big(\mathcal D[N^k,-V^k] + \mathcal D[P^k,V^k] 
  + \mathcal D[Q^k,V^k]\big) \\
  &\le C\sum_{K\in\T}\m(K)(N_K^k+P_K^k+Q_K^k),
\end{align*}
where the constant $C>0$ depends only on $\|\na(\log N^D-V^D)\|_{L^\infty(\Omega)}$ and $\|\na(\log P^D+V^D)\|_{L^\infty(\Omega)}$, but it is independent of the mesh size $\eta$.
\end{proposition}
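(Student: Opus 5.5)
We test the three discrete equations \eqref{3.NPQ} with the entropy variables relative to the boundary data. The test functions are $\log(N_K^k/N_K^D) - (V_K^k - V_K^D)$ for $N$, $\log(P_K^k/P_K^D) + (V_K^k - V_K^D)$ for $P$, and $\log Q_K^k + V_K^k$ for $Q$, which is $\log(Q_K^k/Q_K^D) + (V_K^k - V_K^D)$ since $Q_K^D = e^{-V_K^D}$. Positivity from Theorem \ref{thm.ex} makes these well defined. Summing over $K$, convexity of $s\mapsto s\log s$ gives for the time-derivative terms
\[
\frac{\m(K)}{\Delta t}(u_K^k-u_K^{k-1})\log\frac{u_K^k}{u_K^D}\ge \frac{\m(K)}{\Delta t}\Big[\Big(u_K^k\log\frac{u_K^k}{u_K^D}-u_K^k\Big)-\Big(u_K^{k-1}\log\frac{u_K^{k-1}}{u_K^{D}}-u_K^{k-1}\Big)\Big],
\]
which yields the discrete time differences of the relative entropies $\mathcal H$. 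The potential parts combine into $\frac{1}{\Delta t}\sum_K \m(K)\big((N-P-Q)^k-(N-P-Q)^{k-1}\big)_K (V_K^k-V_K^D)$. By the Poisson equation \eqref{3.V}, with $A_K$ cancelling in the difference, and the discrete integration by parts \eqref{2.dibp}, this term equals $\frac{\lambda^2}{\Delta t}\sum_\sigma \tau_\sigma \D_{K,\sigma}(V^k-V^{k-1})\D_{K,\sigma}(V^k-V^D)$. The boundary terms vanish because $V^k-V^D=0$ on Dirichlet edges and $\D_{K,\sigma}V^k=0$ on Neumann edges; the latter needs a harmless convention for $V^D$ there. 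The inequality $a(a-b)\ge\frac12(a^2-b^2)$ then gives $\frac{1}{\Delta t}(\mathcal G^k-\mathcal G^{k-1})$.

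For the flux terms we apply \eqref{2.dibp} once more. The boundary contributions vanish as follows: on Dirichlet edges the test functions are zero up to data consistency, namely $N_\sigma^k=N^D_\sigma$ and $V_\sigma=V^D_\sigma$, while the Neumann no-flux conditions kill the rest, including all edges for $Q$. Each interior and Dirichlet edge then contributes $\F_{K,\sigma}[u,\phi]\,\D_{K,\sigma}(\log u+\phi) - \F_{K,\sigma}[u,\phi]\,\D_{K,\sigma}w^D$. Here $w^D=\log N^D-V^D$, $\log P^D+V^D$, or $\log Q^D + V^D = 0$. The first part is exactly $\mathcal P_{K,\sigma}\ge0$, and summed it gives $\mathcal D$. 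So for $Q$ the estimate is exact and no remainder appears.

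The main step is bounding the remainders $\sum_\sigma \F_{K,\sigma}\D_{K,\sigma}w^D$ for $N$ and $P$. By (H3) and \eqref{2.meshreg}, $|\D_{K,\sigma}w^D|\le C\dist_\sigma$ with $C$ depending on $\|\na w^D\|_{L^\infty}$. I would write the Scharfetter--Gummel flux as $\F_{K,\sigma}=\tau_\sigma\widetilde u_\sigma\D_{K,\sigma}(\log u+\phi)$, using the logarithmic-mean-type edge value $\widetilde u_\sigma$ of \eqref{2.flux2}. Young's inequality then gives
\[
|\F_{K,\sigma}\D_{K,\sigma}w^D|\le \tfrac12\mathcal P_{K,\sigma}+\tfrac12\tau_\sigma\widetilde u_\sigma |\D_{K,\sigma}w^D|^2\le \tfrac12\mathcal P_{K,\sigma}+C\m(\sigma)\dist_\sigma\,\widetilde u_\sigma .
\]
The hardest point is controlling $\widetilde u_\sigma$ by cell values. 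I would use $\widetilde u_\sigma\le \max(u_K,u_L)$, which should follow from the definition of the Scharfetter--Gummel edge value as a weighted mean of $u_K$ and $u_L$. Combined with $\m(\sigma)\dist_\sigma = d\,\m(\Delta_\sigma)\le C\m(K)$, which follows from mesh regularity and from each cell having boundedly many edges, this bounds the sum by $C\sum_K\m(K)u_K^k$.

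For Dirichlet edges the value $u_\sigma=u^D_\sigma$ enters. This term is absorbed, after enlarging $C$, by the bounded data, or alternatively by the fact that the flux there is controlled directly. If a data-dependent additive constant appears, I would fold it into $C$ in the same way. Summing the three species yields the stated inequality with dissipation factor $\tfrac12$.
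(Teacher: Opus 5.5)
Your proposal is correct and follows the paper's proof step by step. The steps match: the same relative chemical potentials as test functions, convexity of $s\mapsto s\log s$ for the entropy part, the differenced Poisson equation with $a(a-b)\ge\frac12(a^2-b^2)$ for $\mathcal G$, and Young's inequality on $\F_{K,\sigma}\D_{K,\sigma}w^D$ with $\widetilde u_\sigma\le\max\{u_K,u_{K,\sigma}\}$. You also flag a point the paper passes over silently: the Dirichlet-edge remainder $\tau_\sigma u_\sigma^D|\D_{K,\sigma}w^D|^2\lesssim \m(\sigma)\dist_\sigma\sup u^D$ is an additive $O(\Delta x)$ data term. It can be folded into $C\sum_K\m(K)(N_K^k+P_K^k+Q_K^k)$ only because $\sum_K\m(K)Q_K^k=\|Q^{\rm in}\|_{L^1(\Omega)}>0$ by no-flux conservation of $Q$, in which case $C$ also depends on $\|Q^{\rm in}\|_{L^1}$ and on bounds for $N^D,P^D$; alternatively you keep it as an additive constant, which is harmless for the subsequent a priori bounds.
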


Finally, we state the convergence result. Let $\mathcal M_m=(\T_m,\E_m,\mathcal X_m;\Delta t_m,\Delta x_m)$ be a sequence of admissible meshes with mesh size $\eta_m=\max\{\Delta t_m,\Delta x_m\}\to 0$ as $m\to\infty$ satisfying the mesh regularity \eqref{2.meshreg} uniformly in $m\in\N$. Moreover, let $(N,P,Q,V)=(N^k,P^k,Q^k,V^k)_{k=0,\ldots,N_T}$ be the finite-volume solution to \eqref{3.NPQ}--\eqref{3.bc} constructed in Theorem \ref{thm.ex}. To simplify the notation, we set
\begin{align*}
  & N_m := \pi_{\eta_m}N, \
  V_m := \pi_{\eta_m}V, \
  \na_m\sqrt{N} := \pi_{\eta_m}^*(\na_\sigma^{\Delta x_m}\sqrt{N}), \
  \na_mV := \pi_{\eta_m}^*(\na_\sigma^{\Delta x_m}V)
\end{align*}
and analogous notations for $P$ and $Q$. We introduce the Sobolev spaces
\begin{align*}
  H^1_D(\Omega) := \{u\in H^1(\Omega):u=0\mbox{ on }\Gamma_D\}, \quad
  W^{1,1}_D(\Omega) := \{u\in W^{1,1}(\Omega):u=0\mbox{ on }\Gamma_D\},
\end{align*}
and we set $\Omega_T=(0,T)\times\Omega$. 

\begin{theorem}[Convergence of the scheme]\label{thm.conv}
Let Hypotheses (H1)--(H4) hold. There exist limit functions $(N^*,P^*,Q^*,V^*)$ such that, up to a subsequence,
\begin{align*}
  N_m\to N^*,\quad P_m\to P^*,\quad Q_m\to Q^* 
  &\quad\mbox{strongly in }L^1(\Omega_T), \\
  \na_m\sqrt{N}\rightharpoonup\na\sqrt{N^*}, \
  \na_m\sqrt{P}\rightharpoonup\na\sqrt{P^*}, \
  \na_m\sqrt{Q}\rightharpoonup\na\sqrt{Q^*}
  &\quad\mbox{weakly in }L^1(\Omega_T;\R^d),
\end{align*}
as well as $V_m\rightharpoonup V^* $ weakly* in $L^\infty(0,T;H^1(\Omega))$. The quadruple $(N^*,P^*,Q^*,V^*)$ is a weak solution to \eqref{1.eq} in the sense of Definition \ref{def.weak} below.
\end{theorem}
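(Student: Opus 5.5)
The convergence result follows the classical three-stage scheme: uniform discrete estimates, compactness, and identification of the limit. I take as given Theorem~\ref{thm.ex}, Proposition~\ref{prop.dei}, and the coercivity estimates announced in the introduction: the first and second edgewise estimates (Propositions~\ref{prop.first} and~\ref{prop.second}) and the localized Fisher-information bounds of Section~\ref{sec.chi}.

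\emph{Step 1: uniform bounds.} I sum the discrete free energy inequality of Proposition~\ref{prop.dei} over $k$. The right-hand side $C\sum_K\m(K)(N_K^k+P_K^k+Q_K^k)$ is controlled by the entropy itself, because $u\le u\log(u/v)-u+v+Cv$ with $v$ bounded by (H3). A discrete Gronwall argument, valid for $\Delta t$ small, then gives
\[
\max_k\mathcal E^k_{\rm free}+\sum_k\Delta t\big(\mathcal D[N^k,-V^k]+\mathcal D[P^k,V^k]+\mathcal D[Q^k,V^k]\big)\le C,
\]
uniformly in $m$. This yields uniform $L\log L$ bounds on the densities and a uniform $L^\infty(0,T;H^1)$ bound on $V_m$ through $\mathcal G$ and a discrete Poincar\'e inequality, using $\m(\Gamma_D)>0$.

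Next I apply the second coercivity estimate edgewise. Combined with Cauchy--Schwarz, it bounds
\[
\sum_k\Delta t\sum_\sigma\m(\sigma)|\D_{K,\sigma}\sqrt{U^k}|
\]
by the dissipation, the electric energy and the masses, for $U=N,P,Q$. This is a global discrete $L^1(0,T;W^{1,1})$ bound on $\sqrt U$. The first coercivity estimate together with the discrete Poisson equation~\eqref{3.V}, tested against a cutoff $\chi$ supported away from $\pa\Omega$, gives local discrete $L^2(0,T;H^1_{\rm loc})$ bounds on $\sqrt U$. For time compactness I test~\eqref{3.NPQ} with smooth functions. I bound the fluxes by writing them in the upwind form of Lemma~\ref{lem.upwind} and using the uniform flux bounds of Lemma~\ref{lem.flux}. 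The fluxes are then in $L^1$: each is a product of $\sqrt{U}$, or of the weight $\sqrt{\widetilde U_\sigma}$, with the square root of the dissipation. This gives a discrete $W^{-1,1}$-type bound on the time differences.

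\emph{Step 2: compactness.} On compact subsets of $\Omega$, I apply a discrete Aubin--Lions lemma (e.g.\ of Gallou\"et--Latch\'e or Andreianov--Canc\`es--Moussa type) to $\sqrt{U_m}$. It uses the local $L^2(H^1)$ bound and the time-translate estimate, and gives strong convergence in $L^2_{\rm loc}$, hence a.e.\ along a subsequence. The $L\log L$ bound gives equi-integrability, so Vitali's theorem yields strong convergence $U_m\to U^*$ in $L^1(\Omega_T)$. The global $W^{1,1}$ bound with equi-integrability of the discrete gradients gives, by Dunford--Pettis, weak $L^1$ convergence of $\na_m\sqrt U$. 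The limit is identified as $\na\sqrt{U^*}$ by the standard weak-consistency argument for diamond-cell gradients. The same argument, with the boundary values $\sqrt{N^D_\sigma},\sqrt{P^D_\sigma}$ included in $\D_{K,\sigma}$ on Dirichlet edges, shows $\sqrt{N^*}-\sqrt{N^D}\in L^1(W^{1,1}_D)$, and likewise for $P$. For the potential, Banach--Alaoglu gives weak-* convergence of $V_m$ and weak convergence of $\na_mV$.

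\emph{Step 3: passing to the limit.} I test the schemes with $\phi_K=\phi(t_k,x_K)$ for $\phi$ smooth and vanishing on $\Gamma_D$ and at $T$, then perform discrete summation by parts in time and integration by parts~\eqref{2.dibp} in space. The flux is split, via the upwind form, into $2\sqrt{U}\,\D\sqrt U$ plus the drift part $\widetilde U_\sigma\D V$ plus a remainder. The remainder is bounded by $\eta^{1/2}$ times the dissipation, using the Bernoulli expansion $B(s)-1+s/2=O(s^2)$ on edges with small $|\D V|$ and the dissipation itself on edges with large $|\D V|$. The product terms converge as strong $L^2_{\rm loc}$ times weak $L^2$ limits in the interior. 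Near the boundary I use the global $L^1$ bounds together with a cutoff argument, in the same way as \cite{JJZ23}. The Poisson equation passes to the limit linearly.

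\emph{Main obstacle.} I expect the main obstacle to be the drift term $U\na V$ near $\pa\Omega$. Only $L^1$ control of $\na\sqrt U$ is available there, and $\na V$ is only weakly convergent in $L^2$. I would handle it by writing $U\na V=\sqrt U\,(\sqrt U\na V)$, using the dissipation bound on $\sqrt U(\na\log U\pm\na V)$, and decomposing $\phi=\chi\phi+(1-\chi)\phi$ with a cutoff $\chi$. The boundary-layer part would then be made small uniformly in $m$ by equi-integrability.
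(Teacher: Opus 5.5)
\textbf{Steps 1--2.} These follow the paper's architecture: free-energy bounds, then interior Fisher information from Proposition~\ref{prop.first} combined with the discrete Poisson equation, then the global $L^1$ gradient bound from Proposition~\ref{prop.second}, the time estimate, and local Aubin--Lions plus Vitali. Your variations are harmless: a nonlinear Aubin--Lions lemma for $\sqrt{U_m}$ instead of the linear one for $U\chi^\eps$, and Green's formula instead of zero extension for the traces.

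One step there is asserted rather than proved. The $L^1$ bound of Proposition~\ref{prop.second} does not give equi-integrability of $\na_m\sqrt U$, and you need it for Dunford--Pettis and for the Dirichlet traces; the paper stresses that the $L^1$ bound alone is insufficient. Equi-integrability does follow from the pre-Young form of that proof,
$|\na_\sigma^{\Delta x}\sqrt u|\le 3\,\overline u_\sigma^{1/2}|\na_\sigma^{\Delta x}V|+(d\,\mathcal P_{K,\sigma}/\m(\Delta_\sigma))^{1/2}$,
whose right-hand side is equi-integrable by the $L\log L$, electric-energy and dissipation bounds. The paper instead uses the splitting $\na_m\sqrt P=J_{5,m}+J_{6,m}+J_{7,m}$.

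\textbf{Step 3: the genuine gap.} Your remainder estimate is false. Relative to the central flux $\tau_\sigma\D_{K,\sigma}u+\tau_\sigma\frac{u_K+u_{K,\sigma}}{2}\D_{K,\sigma}V$, which is what the expansion of $B$ produces, the remainder is exactly $\tau_\sigma\big(B(|\D_{K,\sigma}V|)-1+\tfrac12|\D_{K,\sigma}V|\big)\D_{K,\sigma}u$.
\begin{itemize}
\item It does not vanish at a discrete equilibrium $\D_{K,\sigma}(\log u+V)=0$, where $\mathcal P_{K,\sigma}=0$.
\item On edges with large $|\D_{K,\sigma}V|$ it is of size $\tau_\sigma|\D_{K,\sigma}V||\D_{K,\sigma}u|$. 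The dissipation does not control this, and near $\Gamma_D$ none of your other bounds do either.
\end{itemize}
More fundamentally, near $\Gamma_D$ neither $\na U=2\sqrt U\na\sqrt U$ nor $U\na V$ is separately integrable. The dissipation controls $\sqrt U\na(\log U\pm V)$, not $\sqrt U\na V$, so no diffusion/drift splitting can be passed to the limit there.

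Your cutoff also fails. Writing $\phi=\chi\phi+(1-\chi)\phi$ produces the term $\phi\na\chi=O(\eps^{-1})$ on a layer of width $\eps$. You would then need $\int_{\rm layer}U_m=o(\eps^2)$ uniformly in $m$, whereas equi-integrability gives only $o(1)$. This is harmless only where $\phi$ vanishes on the boundary. However, Definition~\ref{def.weak}(iii) tests the $Q$-equation with functions that do not vanish on $\Gamma_D$ (no flux on all of $\pa\Omega$). Likewise, the $N$- and $P$-test functions need not vanish on $\Gamma_N$, where your cutoff (supported away from all of $\pa\Omega$) also acts. As written, Step 3 does not even yield the $Q$-formulation.

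\textbf{The missing idea: never split the flux.} By Proposition~\ref{prop.square},
$\F_{K,\sigma}=\tau_\sigma\overline U_\sigma^{1/2}\big(2B(|\D_{K,\sigma}V|)\D_{K,\sigma}\sqrt U\pm(\widehat U_\sigma/\overline U_\sigma^{1/2})\D_{K,\sigma}V\big)$.
So the reconstructed flux is $\pi_m^*(\overline U^{1/2})\,G_m$, where $G_m$ is bounded in $L^2(\Omega_T)$ globally (Lemma~\ref{lem.flux}).
\begin{itemize}
\item The weak limit of $G_m$ is identified using interior test functions only (Lemma~\ref{lem.flux2}).
\item $\pi_m^*(\overline U^{1/2})\to\sqrt{U^*}$ strongly in $L^2(\Omega_T)$ up to the boundary (Lemma~\ref{lem.P12}).
\end{itemize}
Strong-times-weak convergence then gives the whole flux weakly in $L^1(\Omega_T)$. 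Every admissible test function can be used, with no remainder and no cutoff. Alternatively, you can repair your argument by cutting off the flux rather than the test function, using $|\F_{K,\sigma}|\le(\tau_\sigma\widetilde U_\sigma\mathcal P_{K,\sigma})^{1/2}$ in the layer.
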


\begin{definition}\label{def.weak}
A quadruple $(N,P,Q,V)$ is called a {\em weak solution} to \eqref{1.eq}--\eqref{1.bic} on $\Omega_T$ if 
\begin{itemize}
\item[\rm (i)] Nonnegativity and regularity: $N$, $P$, $Q\ge 0$ a.e.\ in $\Omega_T$, $N$, $P$, $Q\in L^\infty(0,T;L^1(\Omega))$, $V\in L^\infty(0,T;H^1(\Omega))$ and
\begin{align*}
  \sqrt{N}-\sqrt{N^D},\ \sqrt{P}-\sqrt{P^D}
  \in L^1(0,T;W^{1,1}_D(\Omega)), \quad
  V-V^D\in L^\infty(0,T;H^1_D(\Omega)).
\end{align*}
\item[\rm (ii)] Flux integrability: $J_N:=\na N-N\na V$, $J_P:=\na P+P\na V$, $J_Q:=\na Q+Q\na V\in L^2(0,T;L^1(\Omega))$.
\item[\rm (iii)] Weak formulation: It holds for $\varphi\in C_0^\infty([0,T)\times(\Omega\cup\Gamma_N))$, $\phi\in C^\infty([0,T)\times\Omega)$, $\psi\in H^1_D(\Omega)$, and a.e.\ $t\in(0,T)$ that
\begin{align*}
  -\int_0^T\int_\Omega N\pa_t\varphi dxdt
  - \int_\Omega N^{\rm in}\varphi(0,x)dx
  + \int_0^T\int_\Omega J_N\cdot\na\varphi dxdt &= 0, \\
  -\int_0^T\int_\Omega P\pa_t\varphi dxdt
  - \int_\Omega P^{\rm in}\varphi(0,x)dx
  + \int_0^T\int_\Omega J_P\cdot\na\varphi dxdt &= 0, \\
  -\int_0^T\int_\Omega Q\pa_t\phi dxdt
  - \int_\Omega Q^{\rm in}\phi(0,x)dx
  + \int_0^T\int_\Omega J_Q\cdot\na\phi dxdt &= 0, \\
  \lambda^2\int_\Omega\na V(t)\cdot\na\psi dx
  + \int_\Omega(N(t)-P(t)-Q(t)+A(x))\psi dx &= 0.
\end{align*}
\end{itemize}
\end{definition}


\section{Dissipation structure of the Scharfetter--Gummel flux}
\label{sec.SG}

In this section, we consider a Scharfetter--Gummel discretization of the flux $J$ associated to a general drift-diffusion equation of the type
\begin{align*}
  \pa_t u = \diver J, \quad J=\na u+u\na\phi 
  \quad\mbox{in }\Omega,\ t>0,
\end{align*}
where $\phi$ is a given potential. The energy dissipation $\mathcal D$ associated with the free energy $E=\int_\Omega(u(\log u-1)+u\phi)dx$ is determined by 
\begin{align}\label{2.p}
  \mathcal D = -\frac{dE}{dt} 
  = \int_\Omega J\cdot\na(\log u+\phi)dx 
  = \int_\Omega u|\na(\log u+\phi)|^2 dx \ge 0.
\end{align}
We aim to reproduce this structure in the discrete setting.

\subsection{Scharfetter--Gummel flux and edgewise entropy production}
\label{sec.flux}

The Bernoulli function $B(s):\R\to\R^+$, defined by $B(s)=s/(e^s-1)$ for $s\neq 0$ and $B(0)=1$, is convex, strictly decreasing, and it satisfies
\begin{align*}
  B(s)\ge\max\{0,-s\}\quad\mbox{for }s\in\R, \quad 
  1-s/2\le B(s)\le 1\quad\mbox{for }s\ge 0.
\end{align*}
For a discrete density $u = (u_K)_{K\in\T}$ with $u_K>0$ and a discrete potential $\phi=(\phi_K)_{K\in\T}$, we define the Scharfetter--Gummel flux
\begin{align}\label{2.flux1}
  \F_{K,\sigma}[u,\phi] := \tau_\sigma\big(B(-\D_{K,\sigma}\phi)
  u_{K,\sigma} - B(\D_{K,\sigma}\phi)u_K\big)
  \quad\mbox{for }\sigma\in\E_K.
\end{align}
This expression can be rewritten. Indeed, it follows from \cite[(3.5)--(3.7)]{CCFG21} that
\begin{equation}\label{2.flux2}
\begin{aligned}
  & \F_{K,\sigma}[u,\phi] = \tau_\sigma\widetilde{u}_\sigma
  (\D_{K,\sigma}\log u + \D_{K,\sigma}\phi), \quad\mbox{where} \\
  & \widetilde{u}_\sigma = a_1u_K + a_2u_{K,\sigma}
  \quad\mbox{with }a_1,\,a_2>0,\ a_1+a_2=1,  
\end{aligned}
\end{equation}
and $a_1$ and $a_2$ are given by
\begin{align*}
  a_1 = \frac{B(y)-B(x)}{x-y}, \quad a_2 = \frac{B(-x)-B(-y)}{x-y},
  \quad x:=\D_{K,\sigma}\log u, \quad y:=-\D_{K,\sigma}\phi.
\end{align*}

We now identify the edgewise coercive structure hidden in the entropy production of the Scharfetter--Gummel flux. Recalling definition \eqref{3.D} of the energy dissipation, we deduce from \eqref{2.flux2} that
\begin{align*}
  \mathcal{P}_{K,\sigma}[u,\phi]
  = \tau_\sigma\widetilde{u}_\sigma
  (\D_{K,\sigma}\log u+\D_{K,\sigma}\phi)^2 \ge 0.
\end{align*}
This expression does not directly yield a coercive bound on $\D_{K,\sigma}\sqrt{u}$. The difficulty is that, because of
\begin{align}\label{2.weakchain}
  u_K^{1/2}\D_{K,\sigma}\log u \le 2\D_{K,\sigma}u^{1/2}
  \le u_{K,\sigma}^{1/2}\D_{K,\sigma}\log u,
\end{align}
the edge density $\widetilde{u}_\sigma$ may approach either $u_K$ or $u_L$ as the discrete potential gradient becomes large. (Inequalities \eqref{2.weakchain} follow from
\begin{align}\label{2.ab}
  \min\bigg\{\frac{1}{\sqrt{a}},\frac{1}{\sqrt{b}}\bigg\}
  \le \frac{\log\sqrt{a}-\log\sqrt{b}}{\sqrt{a}-\sqrt{b}}
  \le \max\bigg\{\frac{1}{\sqrt{a}},\frac{1}{\sqrt{b}}\bigg\},
\end{align}
after distinguishing the cases $a\le b$ and $a>b$.) To overcome this issue, we formulate \eqref{2.flux2} in the next subsection in an upwind form that separates the diffusive and drift components.


\subsection{Upwind representation and refined dissipative structure}

First, we show the following upwind form of the numerical flux.

\begin{lemma}[Upwind representation of the Scharfetter--Gummel flux]
\label{lem.upwind}
For every $\sigma\in\E_K$, the Scharfetter--Gummel flux \eqref{2.flux1}  admits the representation
\begin{align}\label{2.flux3}
  \F_{K,\sigma}[u,\phi] = \tau_\sigma B(|\D_{K,\sigma}\phi|)
  \D_{K,\sigma}u + \tau_\sigma\widehat{u}_\sigma
  \D_{K,\sigma}\phi,
\end{align}
where the upwind edge value is defined by
\begin{align}\label{2.upwind}
  \widehat{u}_\sigma = \widehat{u}_\sigma[\phi] 
  := \begin{cases}
  u_{K,\sigma} &\mbox{if }\D_{K,\sigma}\phi\ge 0, \\
  u_K &\mbox{if }\D_{K,\sigma}\phi < 0.
\end{cases}
\end{align}
\end{lemma}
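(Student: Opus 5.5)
The plan is to use the single algebraic identity
\begin{align*}
  B(-s) = B(s) + s \quad\mbox{for all } s\in\R,
\end{align*}
which follows from $B(-s) = -s/(e^{-s}-1) = s e^s/(e^s-1) = s + s/(e^s-1)$ and also holds at $s=0$. Write $s:=\D_{K,\sigma}\phi$. Then \eqref{2.flux1} reads
\begin{align*}
  \F_{K,\sigma}[u,\phi] = \tau_\sigma\big(B(-s)u_{K,\sigma} - B(s)u_K\big),
\end{align*}
and the aim is to extract a common Bernoulli factor multiplying the difference $u_{K,\sigma}-u_K=\D_{K,\sigma}u$. Which factor is common depends on the sign of $s$, so I will split into the two cases of \eqref{2.upwind}.

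\emph{Case $s\ge 0$.} Here $B(|s|)=B(s)$. Substituting $B(-s)=B(s)+s$ gives
\begin{align*}
  B(-s)u_{K,\sigma} - B(s)u_K = B(s)(u_{K,\sigma}-u_K) + s\,u_{K,\sigma}.
\end{align*}
This equals $B(|s|)\D_{K,\sigma}u + \widehat u_\sigma s$, since $\widehat u_\sigma = u_{K,\sigma}$ in this case.

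\emph{Case $s<0$.} Here $B(|s|)=B(-s)$. Applying the identity with $-s$ in place of $s$ gives $B(s) = B(-s) - s$, hence
\begin{align*}
  B(-s)u_{K,\sigma} - B(s)u_K = B(-s)(u_{K,\sigma}-u_K) + s\,u_K.
\end{align*}
This equals $B(|s|)\D_{K,\sigma}u + \widehat u_\sigma s$, since $\widehat u_\sigma = u_K$ in this case.

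Multiplying by $\tau_\sigma$ gives \eqref{2.flux3} in both cases. The argument applies verbatim to boundary edges, because $u_{K,\sigma}$ and $\D_{K,\sigma}$ are defined uniformly for all $\sigma\in\E_K$.

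There is no real obstacle here. The only points needing care are verifying the identity at $s=0$ and keeping the sign conventions of the upwind choice consistent: the case $s=0$ belongs to the first branch, where both representations coincide anyway.
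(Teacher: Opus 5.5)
Your proof is correct and follows essentially the paper's route. The paper uses the same identity $B(s)-B(-s)=-s$, written as $B(\pm\D_{K,\sigma}\phi)=B(|\D_{K,\sigma}\phi|)+[\D_{K,\sigma}\phi]^{\mp}$, and then uses $[\D_{K,\sigma}\phi]^+u_{K,\sigma}-[\D_{K,\sigma}\phi]^-u_K=\widehat u_\sigma\D_{K,\sigma}\phi$; this is your two-case split expressed with positive and negative parts.
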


\begin{proof}
The identity $B(s)-B(-s)=-s$ for $s\in\R$ implies that
\begin{align*}
  B(\pm\D_{K,\sigma}\phi) = B(|\D_{\sigma,K}\phi|) 
  + [\D_{K,\sigma}\phi]^{\mp},
\end{align*}
where $[s]^+ = \max\{0,s\}$ and $[s]^- = -\min\{0,s\}$. Substituting these expressions into the Scharfetter--Gummel flux \eqref{2.flux1}, we obtain
\begin{align*}
  \F_{K,\sigma}[u,\phi] = \tau_\sigma
  B(|\D_{K,\sigma}\phi|)(u_{K,\sigma}-u_K)
  + \tau_\sigma
  \big([\D_{K,\sigma}\phi^n]^+u_{K,\sigma}
  - [\D_{K,\sigma}\phi]^-u_K\big).
\end{align*}
We deduce from $[\D_{K,\sigma}\phi]^+ u_{K,\sigma} - [\D_{K,\sigma}\phi]^- u_K = 
\widehat{u}_\sigma\D_{K,\sigma}\phi$ that formulation \eqref{2.flux3} holds.
\end{proof}

Substituting formulation \eqref{2.flux3} of the Scharfetter--Gummel flux into definition \eqref{3.D} of the entropy production, the upwind form of the edgewise entropy production follows:
\begin{align}\label{2.prod2}
  \mathcal{P}_{K\sigma}[u,\phi] = \tau_\sigma
  \big(B(|\D_{K,\sigma}\phi|)\D_{K,\sigma}u 
  + \widehat{u}_\sigma\D_{K,\sigma}\phi\big)
  \D_{K,\sigma}(\log u + \phi).
\end{align}
This formulation contains an explicit Fisher-information-type contribution, up to some cross term, as shown in the following proposition.

\begin{proposition}[Edgewise coercive lower bound]\label{prop.coerc}
For every edge $\sigma\in\E_K$ and every cellwise positive density $u$, the edgewise entropy production \eqref{2.prod2} satisfies the bound
\begin{align}\label{2.prodct}
  \mathcal{P}_{K,\sigma}[u,\phi] \ge 4\tau_\sigma
  B(|\D_{K,\sigma}\phi|)|\D_{K,\sigma}\sqrt{u}|^2
  + \frac{\tau_\sigma}{2}\widehat{u}_\sigma
  |\D_{K,\sigma}\phi|^2 + 2\tau_\sigma\D_{K,\sigma}u
  \D_{K,\sigma}\phi,
\end{align}
recalling definition \eqref{2.upwind} of the upwind edge value $\widehat{u}_\sigma$.
\end{proposition}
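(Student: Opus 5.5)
The plan is to expand the upwind form \eqref{2.prod2} into four terms and bound each group separately. Write $s:=\D_{K,\sigma}\phi$, $\delta u:=\D_{K,\sigma}u$, $\delta\log u:=\D_{K,\sigma}\log u$ and $B:=B(|s|)$. Then
\begin{align*}
  \frac{\mathcal P_{K,\sigma}[u,\phi]}{\tau_\sigma}
  = B\,\delta u\,\delta\log u + B\,\delta u\, s
  + \widehat u_\sigma\, s\,\delta\log u + \widehat u_\sigma s^2 .
\end{align*}

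\emph{Diffusive term.} The first term is handled by the elementary inequality $(a-b)(\log a-\log b)\ge 4(\sqrt a-\sqrt b)^2$ for $a,b>0$. This can be derived from \eqref{2.ab} by writing $a-b=(\sqrt a-\sqrt b)(\sqrt a+\sqrt b)$ and using $(\sqrt a+\sqrt b)\min\{a^{-1/2},b^{-1/2}\}\ge 1$. Since $B\ge 0$, it gives $B\,\delta u\,\delta\log u\ge 4B|\D_{K,\sigma}\sqrt u|^2$, which is the Fisher-information term in \eqref{2.prodct}.

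\emph{Mixed upwind term.} Next I will show that the upwind choice \eqref{2.upwind} gives $\widehat u_\sigma\, s\,\delta\log u\ge s\,\delta u$. This follows from the concavity of the logarithm, $u_K\,\delta\log u\le \delta u\le u_{K,\sigma}\,\delta\log u$, the same chain of inequalities behind \eqref{2.weakchain}.
\begin{itemize}
\item If $s\ge0$, then $\widehat u_\sigma=u_{K,\sigma}$, and we multiply the right inequality by $s\ge 0$.
\item If $s<0$, then $\widehat u_\sigma=u_K$, and multiplying the left inequality by $s<0$ reverses it.
\end{itemize}
After this step, it remains to prove
\begin{align*}
  (1+B)\,s\,\delta u + \widehat u_\sigma s^2
  \ge \tfrac12\widehat u_\sigma s^2 + 2 s\,\delta u,
  \quad\text{i.e.}\quad
  \tfrac12\widehat u_\sigma s^2 \ge (1-B(|s|))\, s\,\delta u .
\end{align*}

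\emph{Remaining inequality.} This is the step I expect to be the only delicate one, since the cross term $2\tau_\sigma\,\delta u\, s$ must be absorbed exactly with constant $1/2$. I will use the stated bounds $0\le 1-B(|s|)\le |s|/2$ and distinguish the sign of $s\,\delta u$. If $s\,\delta u\le 0$, the right-hand side is nonpositive and there is nothing to prove. If $s\,\delta u>0$, there are two cases.
\begin{itemize}
\item Case $s>0$, $\delta u>0$: then $\widehat u_\sigma=u_{K,\sigma}$ and $0<\delta u=u_{K,\sigma}-u_K\le u_{K,\sigma}$, so the right-hand side is at most $\tfrac12 s^2 u_{K,\sigma}$.
\item Case $s<0$, $\delta u<0$: then $\widehat u_\sigma=u_K$ and $|\delta u|=u_K-u_{K,\sigma}\le u_K$, so the right-hand side is at most $\tfrac12 s^2 u_K$.
\end{itemize}
In both cases the upwind value dominates $|\delta u|$, which is exactly why the upwind representation of Lemma \ref{lem.upwind} is needed.

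Combining the three steps and multiplying by $\tau_\sigma$ yields \eqref{2.prodct}. The argument applies verbatim for boundary edges, where $u_{K,\sigma}=u_\sigma$, as long as $u_\sigma>0$.
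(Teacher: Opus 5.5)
Your argument follows the paper's proof step by step. You use the same four-term expansion of \eqref{2.prod2} and the same upwind comparison $\widehat u_\sigma\, s\,\delta\log u\ge s\,\delta u$. You also absorb $(1-B(|s|))\,s\,\delta u$ into $\tfrac12\widehat u_\sigma s^2$ the same way, via $1-B(|s|)\le |s|/2$ and $|\delta u|\le \widehat u_\sigma$ when $s\,\delta u>0$; the paper phrases this last point as $-\delta u\, s\ge -\widehat u_\sigma |s|$. These parts are correct.

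The step that fails as written is your justification of $(a-b)(\log a-\log b)\ge 4(\sqrt a-\sqrt b)^2$. Following your recipe with $x=\sqrt a$, $y=\sqrt b$, $x\neq y$, gives
\begin{align*}
  (a-b)(\log a-\log b) = 2(x+y)(x-y)^2\,\frac{\log x-\log y}{x-y}
  \ge \frac{2(x+y)}{\max\{x,y\}}\,(x-y)^2 \ge 2(x-y)^2 .
\end{align*}
This yields only $2\tau_\sigma B(|s|)|\D_{K,\sigma}\sqrt u|^2$, not the stated factor $4$. It cannot be repaired within \eqref{2.ab}. The constant $4$ is sharp as $a\to b$, whereas the lower bound $1/\max\{x,y\}$ in \eqref{2.ab} is strictly smaller than the needed $2/(x+y)$ whenever $x\neq y$. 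What you need is the logarithmic--arithmetic mean inequality $\frac{\log x-\log y}{x-y}\ge \frac{2}{x+y}$. Equivalently, $\log t\ge 2(t-1)/(t+1)$ for $t\ge 1$. This holds because $g(t):=\log t-2(t-1)/(t+1)$ satisfies $g(1)=0$ and $g'(t)=(t-1)^2/\big(t(t+1)^2\big)\ge 0$. Inserting it into the display gives exactly $4(x-y)^2$, and the rest of your proof goes through unchanged.
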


\begin{proof}
We deduce from the elementary inequality $(\log a-\log b)(a-b)\ge 4(\sqrt{s}-\sqrt{b})^2$ for $a$, $b>0$ that $ \D_{K,\sigma}u\D_{K,\sigma}\log u\ge 4|\D_{K,\sigma}\sqrt{u}|^2$.
Using inequalities \eqref{2.ab} with $\sqrt{a}=u_L$ and $\sqrt{b}=u_K$ as well as definition \eqref{2.upwind} of $\widehat{u}_\sigma$, we have
$\widehat{u}_\sigma\D_{K,\sigma}\phi\D_{K,\sigma}\log u \ge \D_{K,\sigma}u\D_{K,\sigma}\phi$. Substituting these estimates into \eqref{2.prod2} leads to
\begin{align}\label{2.aux}
  \mathcal{P}_{K,\sigma}[u,\phi]
  &\ge 4\tau_\sigma B(|\D_{K,\sigma}\phi|)|\D_{K,\sigma}\sqrt{u}|^2
  + \tau_\sigma\widehat{u}_\sigma|\D_{K,\sigma}\phi|^2 \\
  &\phantom{xx}+ \tau_\sigma\big(1+B(|\D_{K,\sigma}\phi|)\big)
  \D_{K,\sigma}u\D_{K,\sigma}\phi. \nonumber 
\end{align}
We write the last term as
\begin{align}\label{2.aux2}
  \tau_\sigma\big(1+B(|\D_{K,\sigma}\phi|\big)
  \D_{K,\sigma}u\D_{K,\sigma}\phi
  &= 2\tau_\sigma\D_{K,\sigma}u\D_{K,\sigma}\phi \\
  &\phantom{xx}+ \tau_\sigma\big(B(|\D_{K,\sigma}\phi|)-1\big)
  \D_{K,\sigma}u\D_{K,\sigma}\phi. \nonumber 
\end{align}
It follows from $0\le 1-B(s)\le s/2$ and 
\begin{align*}
  -\D_{K,\sigma}u\D_{K,\sigma}\phi
  \ge -u_{K,\sigma}[\D_{K,\sigma}\phi]^+
  - u_K[\D_{K,\sigma}\phi]^-
\end{align*}
that
\begin{align*}
  \tau_\sigma\big(B(|\D_{K,\sigma}\phi|)-1\big)
  \D_{K,\sigma}u\D_{K,\sigma}\phi
  \ge -\frac{\tau_\sigma}{2}\widehat{u}_\sigma
  |\D_{K,\sigma}\phi|^2.
\end{align*}
Inserting this estimate into \eqref{2.aux2} and then into \eqref{2.aux} shows the result.
\end{proof}

The following result provides an additional Fisher information–type formulation of the entropy production written as a square.

\begin{proposition}[Refined dissipation square identity]
\label{prop.square}
For every edge $\sigma\in\E_K$ and every cellwise positive density $u$, the edgewise entropy production \eqref{2.prod2} is written as
\begin{align*}
  \mathcal{P}_{K,\sigma}[u,\phi] 
  = \tau_\sigma^*\bigg(2B(|\D_{K,\sigma}\phi|)\D_{K,\sigma}\sqrt{u}
  + \frac{\widehat{u}_\sigma}{\overline{u}_\sigma^{1/2}}
  \D_{K,\sigma}\phi\bigg)^2,
\end{align*}
where the upwind edge value $\widehat{u}_\sigma$ is defined in \eqref{2.upwind}, and the $\frac12$-mean $\overline{u}_\sigma$ and $\tau_\sigma^*$ are given by
\begin{align}\label{2.prodsq}
  \overline{u}_\sigma = \bigg(\frac{u_K^{1/2}
  + u_L^{1/2}}{2}\bigg)^2, \quad
  \tau_\sigma^* := \tau_\sigma\frac{\overline{u}_\sigma}{
  \widetilde{u}_\sigma} \ge \frac{\tau_\sigma}{4},
\end{align}
recalling definition \eqref{2.flux2} of $\widetilde{u}_\sigma$. 
\end{proposition}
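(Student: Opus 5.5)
The plan is to combine the two flux representations already available. By \eqref{2.flux2}, $\F_{K,\sigma}[u,\phi]=\tau_\sigma\widetilde u_\sigma(\D_{K,\sigma}\log u+\D_{K,\sigma}\phi)$ with $\widetilde u_\sigma=a_1u_K+a_2u_{K,\sigma}>0$. Since $\mathcal P_{K,\sigma}[u,\phi]=\F_{K,\sigma}[u,\phi]\,\D_{K,\sigma}(\log u+\phi)$, we can eliminate the logarithmic factor and write
\begin{align*}
  \mathcal P_{K,\sigma}[u,\phi]=\frac{\F_{K,\sigma}[u,\phi]^2}{\tau_\sigma\widetilde u_\sigma}.
\end{align*}
This identity holds in all cases, including when $\D_{K,\sigma}(\log u+\phi)=0$: then both sides vanish. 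It is the point where the nonnegativity of the dissipation gets converted into a square.

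Next, I would insert the upwind form \eqref{2.flux3} from Lemma \ref{lem.upwind} into the numerator. The only manipulation needed is factoring the density difference through square roots:
\begin{align*}
  \D_{K,\sigma}u=(u_{K,\sigma}^{1/2}+u_K^{1/2})\,\D_{K,\sigma}\sqrt u=2\,\overline u_\sigma^{1/2}\,\D_{K,\sigma}\sqrt u,
\end{align*}
which uses the definition of the $\frac12$-mean $\overline u_\sigma$ in \eqref{2.prodsq}. This gives
\begin{align*}
  \F_{K,\sigma}[u,\phi]=\tau_\sigma\overline u_\sigma^{1/2}\Big(2B(|\D_{K,\sigma}\phi|)\D_{K,\sigma}\sqrt u+\frac{\widehat u_\sigma}{\overline u_\sigma^{1/2}}\D_{K,\sigma}\phi\Big).
\end{align*}
Squaring and dividing by $\tau_\sigma\widetilde u_\sigma$ yields exactly the claimed identity, with prefactor $\tau_\sigma\overline u_\sigma/\widetilde u_\sigma=\tau_\sigma^*$.

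It then remains to prove the lower bound $\tau_\sigma^*\ge\tau_\sigma/4$, which is equivalent to $\widetilde u_\sigma\le 4\overline u_\sigma$. Because $a_1,a_2>0$ with $a_1+a_2=1$, the edge value $\widetilde u_\sigma$ is a convex combination of $u_K$ and $u_{K,\sigma}$, so
\begin{align*}
  \widetilde u_\sigma\le\max\{u_K,u_{K,\sigma}\}\le\big(u_K^{1/2}+u_{K,\sigma}^{1/2}\big)^2=4\overline u_\sigma.
\end{align*}

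I do not expect a genuine obstacle here. The one point needing care is remembering that $\widetilde u_\sigma$ is strictly positive, which follows from the positivity of $u$ and of $a_1,a_2$, so that the division by $\tau_\sigma\widetilde u_\sigma$ is legitimate. I would also check that the same algebra applies on exterior edges, where $u_{K,\sigma}$ is the boundary value.
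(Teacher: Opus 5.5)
Your proposal is correct and follows the paper's proof essentially step by step. You use \eqref{2.flux2} to write $\mathcal P_{K,\sigma}[u,\phi]=\F_{K,\sigma}[u,\phi]^2/(\tau_\sigma\widetilde u_\sigma)$, then insert the upwind form \eqref{2.flux3} together with $\D_{K,\sigma}u=2\overline u_\sigma^{1/2}\D_{K,\sigma}\sqrt u$, and finally obtain $\tau_\sigma^*\ge\tau_\sigma/4$ from $\widetilde u_\sigma\le 4\overline u_\sigma$; the paper phrases this last step as $\overline u_\sigma\ge (u_K+u_L)/4\ge\widetilde u_\sigma/4$, which is the same elementary bound as your $\widetilde u_\sigma\le\max\{u_K,u_{K,\sigma}\}\le 4\overline u_\sigma$.
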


\begin{proof}
Taking into account formulations \eqref{2.flux2} and \eqref{2.flux3} of the numerical flux, the entropy production can be written as
\begin{align}\label{2.P}
  \mathcal{P}_{K,\sigma}[u,\phi] 
  = \frac{1}{\tau_\sigma\widetilde{u}_\sigma}
  \big(\F_{K,\sigma}[u,\phi]\big)^2
  = \frac{\tau_\sigma}{\widetilde{u}_\sigma}
  \big(B(|\D_{K,\sigma}\phi|)\D_{K,\sigma}u 
  + \widehat{u}_\sigma \D_{K,\sigma}\phi\big)^2.
\end{align}
Then, substituting
\begin{align*}
  \D_{K,\sigma}u = \big(u_K^{1/2}+u_L^{1/2}\big)
  \D_{K,\sigma}\sqrt{u} = 2\overline{u}_\sigma^{1/2}
  \D_{K,\sigma}\sqrt{u}
\end{align*}
in the previous expression yields \eqref{2.prodsq}. Finally, it follows from
\begin{align*}
  \overline{u}_\sigma = \bigg(\frac{u_K^{1/2}
  + u_L^{1/2}}{2}\bigg)^2 \ge \frac{u_K+u_L}{4}
  \ge \frac{\widetilde{u}_\sigma}{4}
\end{align*}
that $\tau_\sigma^* = \tau_\sigma\overline{u}_\sigma/ \widetilde{u}_\sigma \ge \tau_\sigma/4$, finishing the proof.
\end{proof}


\subsection{Complementary edgewise coercivity estimates}

Estimates \eqref{2.prodct} and \eqref{2.prodsq} of the entropy production degenerate because of $B(s)\to 0$ as $s\to\infty$. Consequently, neither estimate alone yields a direct control of $\D_{K,\sigma}\sqrt{u}$. In the following, we derive two complementary edgewise estimates that provide, together with \eqref{2.prodct} and \eqref{2.prodsq}, a control of $\D_{K,\sigma}\sqrt{u}$. To this end, we introduce the symmetric cross term $\mathcal{C}_{K,\sigma}$ and the upwind cross term $\mathcal{C}_{K,\sigma}^{\rm up}$:
\begin{align}
  \mathcal C_{K,\sigma}[u,\phi]
  &:= \tau_\sigma D_{K,\sigma}u\D_{K,\sigma}\phi
  = 2\tau_\sigma\overline u_\sigma^{1/2}
  \D_{K,\sigma}\sqrt{u}\D_{K,\sigma}\phi, \label{2.ct} \\
  \mathcal C^{\rm up}_{K,\sigma}[u,\phi]
  &:= 2\tau_\sigma\widehat u_\sigma^{1/2}
  \D_{K,\sigma}\sqrt{u}\D_{K,\sigma}\phi. \label{2.ctup}
\end{align}
The following identity quantifies the gap between both cross terms.

\begin{lemma}[Coupling identity]\label{lem.coup}
For any cellwise positive density $u$ and potential $\phi$,
\begin{align*}
  \mathcal C^{\rm up}_{K,\sigma}[u,\phi]
  - \mathcal C_{K,\sigma}[u,\phi]
  = \tau_\sigma|\D_{K,\sigma}\phi||\D_{K,\sigma}\sqrt{u}|^2.
\end{align*}
\end{lemma}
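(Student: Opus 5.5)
The identity is a direct algebraic computation, split into two cases according to the sign of $\D_{K,\sigma}\phi$, as in the definition \eqref{2.upwind} of $\widehat u_\sigma$. Throughout, I read $u_L$ in the $\frac12$-mean $\overline u_\sigma$ of Proposition \ref{prop.square} as $u_{K,\sigma}$, so that the argument covers boundary edges as well.

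\emph{Step 1: reduce to a difference of square roots.} Subtracting the definitions \eqref{2.ctup} and \eqref{2.ct}, and using the second expression for $\mathcal C_{K,\sigma}$ in \eqref{2.ct} (which rests on $\D_{K,\sigma}u = 2\overline u_\sigma^{1/2}\D_{K,\sigma}\sqrt u$), gives
\begin{align*}
  \mathcal C^{\rm up}_{K,\sigma}[u,\phi]-\mathcal C_{K,\sigma}[u,\phi]
  = 2\tau_\sigma\big(\widehat u_\sigma^{1/2}-\overline u_\sigma^{1/2}\big)
  \D_{K,\sigma}\sqrt u\,\D_{K,\sigma}\phi,
\end{align*}
where
\begin{align*}
  \overline u_\sigma^{1/2} = \tfrac12\big(u_K^{1/2}+u_{K,\sigma}^{1/2}\big).
\end{align*}

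\emph{Step 2: the case $\D_{K,\sigma}\phi\ge 0$.} Here $\widehat u_\sigma=u_{K,\sigma}$, so
\begin{align*}
  \widehat u_\sigma^{1/2}-\overline u_\sigma^{1/2}
  = \tfrac12\big(u_{K,\sigma}^{1/2}-u_K^{1/2}\big)
  = \tfrac12\D_{K,\sigma}\sqrt u .
\end{align*}
The difference therefore equals $\tau_\sigma\D_{K,\sigma}\phi\,|\D_{K,\sigma}\sqrt u|^2$. Since $\D_{K,\sigma}\phi=|\D_{K,\sigma}\phi|$ in this case, this is the claimed right-hand side.

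\emph{Step 3: the case $\D_{K,\sigma}\phi<0$.} Here $\widehat u_\sigma=u_K$, so
\begin{align*}
  \widehat u_\sigma^{1/2}-\overline u_\sigma^{1/2} = -\tfrac12\D_{K,\sigma}\sqrt u .
\end{align*}
The difference becomes $-\tau_\sigma\D_{K,\sigma}\phi\,|\D_{K,\sigma}\sqrt u|^2$, and $-\D_{K,\sigma}\phi=|\D_{K,\sigma}\phi|$ again gives the claim.

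There is no real obstacle. The only point needing care is the sign bookkeeping: in each case the upwind choice $\widehat u_\sigma$ makes the factor $\widehat u_\sigma^{1/2}-\overline u_\sigma^{1/2}$ carry the same sign as $\D_{K,\sigma}\phi$ relative to $\D_{K,\sigma}\sqrt u$. This is exactly why the product is nonnegative and produces $|\D_{K,\sigma}\phi|$.
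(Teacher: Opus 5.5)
Your proof is correct and essentially matches the paper's argument. Both split on the sign of $\D_{K,\sigma}\phi$ and reduce to the identity $2(\widehat u_\sigma^{1/2}-\overline u_\sigma^{1/2})\D_{K,\sigma}\phi = |\D_{K,\sigma}\phi|\,\D_{K,\sigma}\sqrt{u}$. The paper phrases this as a decomposition of $2\widehat u_\sigma^{1/2}\D_{K,\sigma}\phi$ before multiplying by $\tau_\sigma\D_{K,\sigma}\sqrt{u}$, whereas you subtract the two cross terms first, which is a bit cleaner and avoids the small notational slips in the paper's displayed case identities.
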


\begin{proof}
If $\D_{K,\sigma}\phi\ge 0$ then $\widehat{u}_\sigma^{1/2} = 
u_{K,\sigma}^{1/2}$ and
\begin{align*}
  2\widehat{u}_\sigma^{1/2}\D_{K,\sigma}\phi
  = |\D_{K,\sigma}\phi|\big(u_{K,\sigma}^{1/2} - u_K^{1/2}\big)
  + \D_{K,\sigma}\phi
  \big(u_{K,\sigma}^{1/2} + u_K^{1/2}\big).
\end{align*}
If $\D_{K,\sigma}\phi<0$, the same identity holds with $u_K$ and $u_{K,\sigma}$ interchanged. After multiplication by $\D_{K,\sigma}\sqrt{u}$, this yields
\begin{align*}
  \mathcal C^{\rm up}_{K,\sigma}[u,\phi]
  &= \tau_\sigma\big(\D_\sigma\phi|\D_{K,\sigma}\sqrt{u}|^2
  + 2\overline{u}_{K,\sigma}^{1/2}\D_{K,\sigma}\phi
  \D_{K,\sigma}\sqrt{u}\big) \\
  &= \tau_\sigma\big(\D_\sigma\phi|\D_{K,\sigma}\sqrt{u}|^2
  + \mathcal C_{K,\sigma}[u,\phi]\big),
\end{align*}
showing the result.
\end{proof}

Our first coercivity bound involves the cross term \eqref{2.ct}.

\begin{proposition}[First edgewise coercivity estimate]\label{prop.first}
It holds for every edge $\sigma\in\E_K$ and every cellwise positive density $u$ and potential $\phi$ that
\begin{align*}
  \tau_\sigma |\D_{K,\sigma}\sqrt{u}|^2
  \lesssim\mathcal P_{K,\sigma}[u,\phi]
  - \mathcal C_{K,\sigma}[u,\phi].
\end{align*}
\end{proposition}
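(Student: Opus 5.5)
The plan is to reduce the estimate to a scalar inequality on a single edge and split into a regime of small and of large potential jump. Throughout, write $a:=\D_{K,\sigma}\sqrt{u}$, $s:=\D_{K,\sigma}\phi$ and $X:=\D_{K,\sigma}\log u+s$. By \eqref{2.flux2} we have $\mathcal P_{K,\sigma}=\tau_\sigma\widetilde u_\sigma X^2$, and by \eqref{2.ct} we have $\mathcal C_{K,\sigma}=2\tau_\sigma\overline u_\sigma^{1/2}as$. The basic tool is the relation behind Lemma \ref{lem.coup}:
\begin{align*}
  \overline u_\sigma^{1/2}=\widehat u_\sigma^{1/2}-\tfrac12\,{\rm sign}(s)\,a ,
\end{align*}
which gives $\widehat u_\sigma^{1/2}\ge\overline u_\sigma^{1/2}-|a|/2$. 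Hence
\begin{align*}
  |\mathcal C_{K,\sigma}|\le 2\tau_\sigma\widehat u_\sigma^{1/2}|a||s|+\tau_\sigma|s|a^2 .
\end{align*}

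\emph{Small jumps, $|s|\le s_0$.} Here I start from Proposition \ref{prop.coerc}, which yields
\begin{align*}
  \mathcal P_{K,\sigma}-\mathcal C_{K,\sigma}\ge 4\tau_\sigma B(|s|)a^2+\tfrac{\tau_\sigma}{2}\widehat u_\sigma s^2+\mathcal C_{K,\sigma}.
\end{align*}
I bound $\mathcal C_{K,\sigma}$ from below by $-|\mathcal C_{K,\sigma}|$ and apply Young's inequality:
\begin{align*}
  2\widehat u_\sigma^{1/2}|a||s|\le\tfrac12\widehat u_\sigma s^2+2a^2s^2 .
\end{align*}
This gives
\begin{align*}
  \mathcal P_{K,\sigma}-\mathcal C_{K,\sigma}\ge\tau_\sigma a^2\big(4B(|s|)-2s^2-|s|\big).
\end{align*}
Using $B(s)\ge1-s/2$ for $s\ge0$ and choosing, say, $s_0=1/4$, the bracket is bounded below by a positive absolute constant (about $3$). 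This settles the case $|s|\le s_0$.

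\emph{Large jumps, $|s|\ge s_0$.} Here $B(|s|)$ may degenerate, so the term $4\tau_\sigma B a^2$ is useless and the argument has to use the explicit structure instead. Write $\D_{K,\sigma}u=\ell_\sigma\,\D_{K,\sigma}\log u=\ell_\sigma(X-s)$, where $\ell_\sigma$ is the logarithmic mean of $u_K$ and $u_{K,\sigma}$. Then
\begin{align*}
  \mathcal P_{K,\sigma}-\mathcal C_{K,\sigma}=\tau_\sigma\big(\widetilde u_\sigma X^2-\ell_\sigma Xs+\ell_\sigma s^2\big).
\end{align*}
This is a quadratic form in $(X,s)$. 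It is uniformly coercive, i.e.\ bounded below by $c\,\tau_\sigma(\widetilde u_\sigma X^2+\ell_\sigma s^2)$, as soon as $\ell_\sigma\le(4-\delta)\widetilde u_\sigma$. On the other hand, \eqref{2.ab} (the elementary Fisher bound) gives
\begin{align*}
  a^2\le\tfrac14\ell_\sigma(\D_{K,\sigma}\log u)^2\le\tfrac12\ell_\sigma(X^2+s^2),
\end{align*}
so the claimed estimate would follow once we know $\ell_\sigma\lesssim\widetilde u_\sigma$.

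\emph{Main obstacle.} I expect the main difficulty to be exactly this comparison of $\widetilde u_\sigma$ with $\ell_\sigma$ when $|s|$ is large, because $\widetilde u_\sigma$ can collapse towards the smaller of $u_K,u_{K,\sigma}$. My first attempt is to use the explicit weights $a_1,a_2$ from \eqref{2.flux2}, which are divided differences of $B$ in the variables $x=\D_{K,\sigma}\log u$ and $y=-s$.
\begin{itemize}
\item When $x$ and $y$ are far apart relative to their sizes, $\widetilde u_\sigma$ is close to $\ell_\sigma$ up to constants.
\item When $\widetilde u_\sigma\ll\ell_\sigma$, the density ratio must be large and must be aligned against the drift. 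In that configuration I would argue that the term $\ell_\sigma s^2$ together with the sign of $-\ell_\sigma Xs$ already dominates $\ell_\sigma(\D_{K,\sigma}\log u)^2\gtrsim a^2$.
\end{itemize}
If this direct comparison becomes too messy, the fallback is to normalize $u_K=1$ and reduce to a two-variable inequality in $r=u_{K,\sigma}/u_K$ and $s$ with $|s|\ge s_0$. I would then verify this inequality by examining the asymptotic regimes $r\to0$, $r\to\infty$ and $|s|\to\infty$, and conclude by continuity and compactness on the remaining bounded region.
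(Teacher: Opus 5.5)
Your small-jump case is essentially sound, apart from a slip. The inequality $2\widehat u_\sigma^{1/2}|a||s|\le\frac12\widehat u_\sigma s^2+2a^2s^2$ fails for $|s|<1$; for instance at $\widehat u_\sigma^{1/2}=2|a|$ the right-hand side equals $2\widehat u_\sigma^{1/2}|a|s^2$. The correct Young inequality is $2\widehat u_\sigma^{1/2}|a||s|\le\frac12\widehat u_\sigma s^2+2a^2$. It gives the bracket $4B(|s|)-2-|s|\ge 2-3|s|$, which still closes the case $|s|\le 1/4$.

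The genuine gap is the large-jump case. Your reduction needs $\ell_\sigma\le(4-\delta)\widetilde u_\sigma$, and this is false. Take $u_K=1$, $u_{K,\sigma}=r$ fixed and $s\to+\infty$. Then $\F_{K,\sigma}/\tau_\sigma=(s+B(s))r-B(s)\approx rs$ and $X\approx s$, so $\widetilde u_\sigma\to r$, while $\ell_\sigma=(1-r)/|\log r|$. Hence $\ell_\sigma/\widetilde u_\sigma\approx 1/(r|\log r|)$, which is unbounded as $r\to 0$.

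In that regime your quadratic form in $(X,s)$ is indefinite. Treating $X$ and $s$ as independent discards the constraint $X-s=\D_{K,\sigma}\log u$.

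The proposed rescue in your second bullet also fails. One has exactly $\ell_\sigma s^2-\ell_\sigma Xs=-s\,\D_{K,\sigma}u=-\mathcal C_{K,\sigma}/\tau_\sigma$. For $r=e^{-M}$ with $M\gg s\gg 1$ (where still $\widetilde u_\sigma\approx se^{-s}/M\ll\ell_\sigma\approx 1/M$), this quantity is about $s$, whereas $\ell_\sigma(\D_{K,\sigma}\log u)^2\approx M$. The intermediate target $\ell_\sigma(\D_{K,\sigma}\log u)^2=\D_{K,\sigma}u\,\D_{K,\sigma}\log u$ is far larger than $a^2$ for large density ratios, so it is the wrong quantity to aim at. The compactness fallback is only a plan, not an argument.

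The missing idea is to keep the sign of $\mathcal C_{K,\sigma}$ rather than bounding it by $|\mathcal C_{K,\sigma}|$. For $|s|\ge s_0$ this gives two cases.

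\emph{Case $\mathcal C_{K,\sigma}<0$.} This is exactly the collapse regime above. Here $\mathcal P_{K,\sigma}-\mathcal C_{K,\sigma}\ge-\mathcal C_{K,\sigma}=\tau_\sigma|s|(u_K^{1/2}+u_{K,\sigma}^{1/2})|a|\ge s_0\tau_\sigma a^2$.

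\emph{Case $\mathcal C_{K,\sigma}\ge 0$.} Then $\D_{K,\sigma}u$ has the sign of $s$, so $\widehat u_\sigma=\max\{u_K,u_{K,\sigma}\}\ge a^2$. Proposition~\ref{prop.coerc} then gives $\mathcal P_{K,\sigma}-\mathcal C_{K,\sigma}\ge\frac{\tau_\sigma}{2}\widehat u_\sigma s^2\ge\frac{s_0^2}{2}\tau_\sigma a^2$.

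The paper avoids any case split. It writes $\tau_\sigma a^2=\tau_\sigma B(|s|)a^2+\tau_\sigma(1-B(|s|))a^2$ and bounds the degenerate part by $\frac{\tau_\sigma}{2}|s|a^2=\frac12(\mathcal C^{\rm up}_{K,\sigma}-\mathcal C_{K,\sigma})$ via Lemma~\ref{lem.coup}. It then applies Young's inequality to $\mathcal C^{\rm up}_{K,\sigma}$ and uses Proposition~\ref{prop.coerc}. This yields $\frac{\tau_\sigma}{2}a^2\le\mathcal P_{K,\sigma}-\frac52\mathcal C_{K,\sigma}\le\frac52(\mathcal P_{K,\sigma}-\mathcal C_{K,\sigma})$, where the last step uses $\mathcal P_{K,\sigma}\ge 0$.
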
 

\begin{proof}
Since $0\le 1-B(s)\le s/2$ for all $s\ge0$, we can estimate
\begin{align}\label{2.aux3}
  \tau_\sigma|D_{K,\sigma}\sqrt{u}|^2
  &= \tau_\sigma B(|\D_{K,\sigma}\phi|)|\D_{K,\sigma}\sqrt{u}|^2
  + \tau_\sigma \big(1 - B(|\D_{K,\sigma}\phi|)\big)
  |\D_{K,\sigma}\sqrt{u}|^2 \\
  &\le \tau_\sigma B(|\D_{K,\sigma}\phi|)|\D_{K,\sigma}\sqrt{u}|^2
  + \frac{\tau_\sigma}{2} 
  |\D_{K,\sigma}\phi||\D_{K,\sigma}\sqrt{u}|^2. \nonumber 
\end{align}
By Lemma \ref{lem.coup} and Young's inequality, the last term becomes
\begin{align*}
  \frac{\tau_\sigma}{2} 
  |\D_{K,\sigma}\phi||\D_{K,\sigma}\sqrt{u}|^2
  &= \frac12\mathcal C^{\rm up}_{K,\sigma}[u,\phi]
  - \frac12\mathcal C_{K,\sigma}[u,\phi] \\
  &\le \frac{\tau_\sigma}{2}|\D_{K,\sigma}\sqrt{u}|^2
  + \frac{\tau_\sigma}{2}\widehat u_\sigma|\D_{K,\sigma}\phi|^2
  - \frac12\mathcal C_{K,\sigma}[u,\phi].
\end{align*}
Substituting this expression into \eqref{2.aux3}, rearranging the terms, and then using Proposition \ref{prop.coerc} yields
\begin{align*}
  \frac{\tau_\sigma}{2}|D_{K,\sigma}\sqrt{u}|^2
  &\le \tau_\sigma B(|\D_{K,\sigma}\phi|)|\D_{K,\sigma}\sqrt{u}|^2
  + \frac{\tau_\sigma}{2}\widehat u_\sigma|\D_{K,\sigma}\phi|^2
  - \frac12\mathcal C_{K,\sigma}[u,\phi] \\
  &\le 4\tau_\sigma B(|\D_{K,\sigma}\phi|)|\D_{K,\sigma}\sqrt{u}|^2
  + \frac{\tau_\sigma}{2}\widehat u_\sigma|\D_{K,\sigma}\phi|^2
  - \frac12\mathcal C_{K,\sigma}[u,\phi] \\
  &\le \mathcal{P}_{K,\sigma}[u,\phi] 
  - \frac52\mathcal C_{K,\sigma}[u,\phi],
\end{align*}
which finishes the proof.
\end{proof}

Our second estimate is weaker, but it is free of the cross term.

\begin{proposition}[Second edgewise coercivity estimate]
\label{prop.second}
It holds for every edge $\sigma\in\E_K$ and every cellwise positive density $u$ and potential $\phi$ that
\begin{align*}
  \m(\sigma)|\D_{K,\sigma}\sqrt{u}|
  \lesssim \mathcal P_{K,\sigma}[u,\phi] 
  + \tau_\sigma|\D_{K,\sigma}\phi|^2 
  + \m(\Delta_\sigma)(1+\overline{u}_\sigma).
\end{align*}
\end{proposition}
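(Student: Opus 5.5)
The plan is to work with the square identity of Proposition \ref{prop.square}. I will set $X:=\D_{K,\sigma}\sqrt{u}$ and $s:=|\D_{K,\sigma}\phi|$, and use $\m(\sigma)=\tau_\sigma\dist_\sigma$ together with $\m(\Delta_\sigma)=\tau_\sigma\dist_\sigma^2/d$ from \eqref{2.mdelta}. With these, the claim reads $\tau_\sigma\dist_\sigma|X|\lesssim \mathcal P_{K,\sigma}+\tau_\sigma s^2+\tau_\sigma\dist_\sigma^2(1+\overline u_\sigma)$. Two elementary facts will be used throughout. First, $\widehat u_\sigma\in\{u_K,u_{K,\sigma}\}$ and $\overline u_\sigma\ge\max\{u_K,u_{K,\sigma}\}/4$, so
\[
\widehat u_\sigma/\overline u_\sigma^{1/2}\le 2\overline u_\sigma^{1/2}.
\]
Second, $|X|\le u_K^{1/2}+u_{K,\sigma}^{1/2}=2\overline u_\sigma^{1/2}$. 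I will then split into two cases according to the size of $s$, since the only degeneracy is $B(s)\to0$ as $s\to\infty$.

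\emph{Case $s\le 1$.} Here $B(s)\ge B(1)>0$, because $B$ is decreasing. By Proposition \ref{prop.square} and $\tau_\sigma^*\ge\tau_\sigma/4$ we get
\[
2B(1)|X|\le |2B(s)X+(\widehat u_\sigma/\overline u_\sigma^{1/2})\D_{K,\sigma}\phi| + 2\overline u_\sigma^{1/2}s \le 2(\mathcal P_{K,\sigma}/\tau_\sigma)^{1/2}+2\overline u_\sigma^{1/2}s .
\]
Multiplying by $\tau_\sigma\dist_\sigma$ and applying Young's inequality to each term will give the bound. The first term is handled by $\tau_\sigma\dist_\sigma(\mathcal P/\tau_\sigma)^{1/2}=(\mathcal P)^{1/2}(\tau_\sigma\dist_\sigma^2)^{1/2}\le \mathcal P+\tau_\sigma\dist_\sigma^2$. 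The second is handled by $\tau_\sigma\dist_\sigma\overline u_\sigma^{1/2}s\le \tau_\sigma s^2+\tau_\sigma\dist_\sigma^2\overline u_\sigma$. Both right-hand sides are of the required form, since $\tau_\sigma\dist_\sigma^2=d\,\m(\Delta_\sigma)$.

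\emph{Case $s>1$.} Here the dissipation is not needed at all. Using $|X|\le2\overline u_\sigma^{1/2}$ and $1<s$, I will estimate
\[
\tau_\sigma\dist_\sigma|X|\le 2\tau_\sigma\dist_\sigma\overline u_\sigma^{1/2}s\le \tau_\sigma s^2+\tau_\sigma\dist_\sigma^2\overline u_\sigma,
\]
which is again of the required form.

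Combining the two cases proves the estimate, with a constant depending only on $B(1)$ and $d$. The one point needing care is the comparison $\widehat u_\sigma\le 4\overline u_\sigma$: this is what prevents the upwind drift term from producing a factor that is not controlled by $\overline u_\sigma$. Everything else is a direct application of Young's inequality, balanced by the length scale $\dist_\sigma$. For boundary edges the same computation applies with $u_{K,\sigma}=u_\sigma$.
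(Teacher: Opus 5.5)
Your proof is correct and uses the same ingredients as the paper's proof: the square identity of Proposition~\ref{prop.square} with $\tau_\sigma^*\ge\tau_\sigma/4$, the bounds $|\D_{K,\sigma}\sqrt{u}|\le 2\overline u_\sigma^{1/2}$ and $\widehat u_\sigma\le 4\overline u_\sigma$, and Young's inequality balanced via $\m(\sigma)\dist_\sigma=d\,\m(\Delta_\sigma)$. The only difference is that you handle the degeneracy of $B$ by splitting into $|\D_{K,\sigma}\phi|\le 1$ and $|\D_{K,\sigma}\phi|>1$, while the paper avoids cases: it writes $2\D_{K,\sigma}\sqrt{u}$ as $2(1-B(s))\D_{K,\sigma}\sqrt{u}-(\widehat u_\sigma/\overline u_\sigma^{1/2})\D_{K,\sigma}\phi$ plus the square-identity term and uses $1-B(s)\le s/2$ for all $s$, so its constants do not involve $B(1)$. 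One harmless slip: your two facts give $\widehat u_\sigma/\overline u_\sigma^{1/2}\le 4\overline u_\sigma^{1/2}$, not $2\overline u_\sigma^{1/2}$ (take $u_K\to 0$ with $\widehat u_\sigma=u_{K,\sigma}$), which only changes the constants.
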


\begin{proof}
We decompose and use $0\le 1-B(s)\le s/2$ for $s\ge 0$ as well as Proposition \ref{prop.square}:
\begin{align*}
  2|\D_{K,\sigma}\sqrt{u}|
  &= \bigg| 2\big(1-B(|\D_{K,\sigma}\phi|)\big)\D_{K,\sigma}\sqrt{u}
  - \frac{\widehat u_\sigma}{\overline u_\sigma^{1/2}}
  \D_{K,\sigma}\phi \\
  &\phantom{xx} + \bigg(2B(|\D_{K,\sigma}\phi|)\D_{K,\sigma}\sqrt{u}
  + \frac{\widehat u_\sigma}{\overline u_\sigma^{1/2}}
  \D_{K,\sigma}\phi\bigg)\bigg| \\
  &\le |\D_{K,\sigma}\phi||\D_{K,\sigma}\sqrt{u}|
  + \frac{\widehat u_\sigma}{\overline u_\sigma^{1/2}}
  |\D_{K,\sigma}\phi|
  + \big(4\tau_\sigma^{-1}\mathcal{P}_{K,\sigma}[u,\phi]\big)^{1/2}.
\end{align*}
It follows from $|\D_{K,\sigma}\sqrt{u}|\le 2\overline u_\sigma^{1/2}$ and $\widehat u_\sigma \le \max\{u_{K,\sigma},u_K\} \le (u_{K,\sigma}^{1/2}+u_K^{1/2})^2 = 4\overline u_\sigma$ that
\begin{align*}
  2|\D_{K,\sigma}\sqrt{u}|
  \le 6|\D_{K,\sigma}\phi|\overline u_\sigma^{1/2}
  + \big(4\tau_\sigma^{-1}\mathcal{P}_{K,\sigma}[u,\phi]\big)^{1/2}.
\end{align*}
We multiply this inequality by $\m(\sigma)/2$, use relation \eqref{2.mdelta} together with $\tau_\sigma=\m(\sigma)/\dist_\sigma$, and apply Young's inequality:
\begin{align*}
  \m(\sigma)|\D_{K,\sigma}\sqrt{u}|
  &\le 3\m(\sigma)|\D_{K,\sigma}\phi|\overline u_\sigma^{1/2}
  + \big(d\m(\Delta_\sigma)\mathcal{P}_{K,\sigma}[u,\phi]
  \big)^{1/2} \\
  &\le \frac32\tau_\sigma|\D_{K,\sigma}\phi|^2
  + \frac32\m(\sigma)\dist_\sigma\overline u_\sigma
  + \frac12\mathcal P_{K,\sigma}[u,\phi] 
  + \frac{d}{2}\m(\Delta_\sigma).
\end{align*}
An application of identity \eqref{2.mdelta}, namely $\m(\sigma)\dist_\sigma = d\m(\Delta_\sigma)$, finishes the proof.
\end{proof}


\section{Existence of discrete solutions and global a priori bounds}
\label{sec.ex}

In this section, we establish the existence of a solution to the finite-volume scheme. We start with the proof of the discrete free energy inequality.

\subsection{Proof of Proposition \ref{prop.dei}}

Let $(N^k,P^k,Q^k,V^k)_{k=0,\ldots,N_T}$ be a discrete solution with cellwise positive discrete densities. 
We introduce the relative chemical potentials
\begin{align*}
  & \mu_{N,K}^k := \log\frac{N_K^k}{N_K^D} - (V_K^k-V_K^D), \quad
  \mu_{P,K}^k := \log\frac{P_K^k}{P_K^D} + (V_K^k-V_K^D), \\
  & \mu_{Q,K}^k := \log Q_K^k + V_K ^k
  = \log\frac{Q_K^k}{Q_K^D} + (V_K^k-V_K^D),
\end{align*}
recalling that $Q^D=\exp(-V^D)$. We multiply equations \eqref{3.NPQ} by $\mu_{N,K}^k$, $\mu_{P,K}^k$, and $\mu_{Q,K}^k$, respectively, sum over $K\in\T$, and use the discrete integration-by-parts formula \eqref{2.dibp}. The equation for $N^k$ becomes $I_1+I_2+I_3=0$, where
\begin{align*}
  I_1 &= \frac{1}{\Delta t}\sum_{K\in\T}\m(K)(N_K^k-N_K^{k-1})
  \log\frac{N_K^k}{N_K^D}, \\
  I_2 &= -\frac{1}{\Delta t}\sum_{K\in\T}\m(K)(N_K^k-N_K^{k-1})
  (V_K^k-V_K^D), \\
  I_3 &= \sum_{\sigma=K|L\in\E}\F_{K,\sigma}[N^k,-V^k]
  \D_{K,\sigma}\bigg(\log\frac{N^k}{N^D}-(V^k-V^D)\bigg).
\end{align*}
We deduce from the convexity of $s\mapsto s\log s$ that $(a-b)\log a\ge a(\log a-1)-b(\log b-1)$ and hence,
\begin{align*}
  (N_K^k-N_K^{k-1})&\log\frac{N_K^k}{N_K^D} 
  \ge N_K^k(\log N_K^k-1) - N_K^{k-1}(\log N_K^{k-1}-1)
  - (N_K^k-N_K^{k-1})\log N_K^D \\
  &= \bigg(N_K^k\log\frac{N_K^k}{N_K^D}-(N_K^k-N_K^D)\bigg)
  - \bigg(N_K^{k-1}\log\frac{N_K^{k-1}}{N_K^D}-(N_K^{k-1}-N_K^D)\bigg).
\end{align*}
Consequently, by definition \eqref{3.H} of the relative entropy,
\begin{align*}
  I_1 \ge \frac{1}{\Delta t}\big(\mathcal H(N^k|N^D)
  - \mathcal H(N^{k-1}|N^D)\big).
\end{align*}
The term $I_2$ will be considered later. Definition \eqref{3.D} of the free energy dissipation, formulation \eqref{2.flux2} of the flux $\F_{K,\sigma}[N^k,-V^k]$, and Young's inequality yield
\begin{align*}
  I_3 &= \mathcal D[N^k,-V^k] + \sum_{\sigma=K|L\in\E}
  \F_{K,\sigma}[N^k,-V^k]\D_{K,\sigma}(\log N^D-V^D) \\
  &\ge \frac12\mathcal D[N^k,-V^k] - \frac12\sum_{\sigma=K|L\in\E}
  \tau_\sigma\widetilde{N}^k_\sigma|\D_{K,\sigma}(\log N^D-V^D)|^2.
\end{align*}

To bound the last term, we use the following mesh estimate. For any $\psi\in W^{1,\infty}(\Omega)$, there exists a constant $C>0$, independent of the mesh parameters, such that
\begin{align*}
  \tau_\sigma|\D_{K,\sigma}\psi|^2 \le C\|\na\psi\|_{L^\infty(\Omega)}^2
  (\m(K)+\m(L)) \quad\mbox{for }\sigma=K|L\in\E.
\end{align*}
Applying this estimate with $\psi=\log N^D-V^D$ and using the inequality $\widetilde{N}_\sigma^k\le N_L^k+N_K^k$, we conclude that 
\begin{align*}
  I_3 \ge \frac12\mathcal D[N^k,-V^k] - C\sum_{K\in\T}\m(K)N_K^k
\end{align*}
for some $C>0$, depending on the $L^\infty(\Omega)$ norm of $\log N^D-V^D$. 

Summarizing these estimations, we obtain
\begin{align*}
  \frac{1}{\Delta t}&\big(\mathcal H(N^k|N^D)
  - \mathcal H(N^{k-1}|N^D)\big)
  + \frac12\mathcal D[N^k,-V^k] \\
  &\le \frac{1}{\Delta t}\sum_{K\in\T}\m(K)(N_K^k-N_K^{k-1})
  (V_K^k-V_K^D) + C\sum_{K\in\T}\m(K)N_K^k.
\end{align*}
We apply similar arguments to the equations for $P$ and $Q$ and sum the resulting inequalities:
\begin{align*}
  \frac{1}{\Delta t}&\big(\mathcal H(N^k|N^D) 
  + \mathcal H(P^k|P^D) + \mathcal H(Q^k|Q^D)\big) \\
  &\phantom{xx}+ \frac12\big(\mathcal D[N^k,-V^k] + \mathcal D[P^k,V^k]
  + \mathcal D[Q^k,V^k]\big) \\
  &\le \frac{1}{\Delta t}\sum_{K\in\T}\m(K)\big((N_K^k-P_K^k-Q_K^k)
  - (N_K^{k-1}-P_K^{k-1}-Q_K^{k-1})\big)(V_K^k-V_K^D) \\
  &\phantom{xx}+ C\sum_{K\in\T}\m(K)(N_K^k+P_K^k+Q_K^k).
\end{align*}

For the next-to-last term, we take the difference of the discrete equations \eqref{3.V} for $V^k$ and $V^{k-1}$, multiply the resulting equation by $V_K^k-V_K^{D}$, sum over $K\in\T$, apply the discrete integration-by-parts formula, and finally use Young's inequality:
\begin{align*}
  \sum_{K\in\T}&\m(K)\big((N_K^k-P_K^k-Q_K^k)
  - (N_K^{k-1}-P_K^{k-1}-Q_K^{k-1})\big)(V_K^k-V_K^D) \\
  &= \lambda^2\sum_{K\in\T}\sum_{\sigma\in\E_K}\tau_\sigma
  \D_{K,\sigma}(V^k-V^{k-1})(V_K^k-V_K^D) \\
  &= -\lambda^2\sum_{\sigma=K|L\in\E_K}\tau_\sigma
  \D_{K,\sigma}(V^k-V^{k-1})\D_{K,\sigma}(V_K^k-V_K^D) \\
  &= -\lambda^2\sum_{\sigma=K|L\in\E_K}\tau_\sigma
  \big(\D_{K,\sigma}(V^k-V_K^D) - \D_{K,\sigma}(V^{k-1}-V_K^D)\big)
  \D_{K,\sigma}(V_K^k-V_K^D) \\
  &\le -\frac{\lambda^2}{2}\sum_{\sigma=K|L\in\E_K}\tau_\sigma
  \big|(\D_{K,\sigma}(V^k-V_K^D)\big|^2
  + \frac{\lambda^2}{2}\sum_{\sigma=K|L\in\E_K}\tau_\sigma
  \big|(\D_{K,\sigma}(V^{k-1}-V_K^D)\big|^2 \\
  &= -\mathcal G(V^k;V^D) + \mathcal G(V^{k-1};V^D),
\end{align*}
recalling definition \eqref{3.G} of the electric energy. Taking into account definition \eqref{3.E} of the total energy $\E_{\rm free}^k$, this concludes the proof.


\subsection{Global a priori bounds}

We collect the global a priori bounds. The first lemma gathers the uniform mass, entropy, electric energy, and dissipation bounds, which follow from the discrete free energy inequality. The second lemma combines these bounds with the refined dissipation structure in Section \ref{sec.flux} to control the Fisher information.

\begin{lemma}[Uniform mass, entropy, and dissipation bounds]
\label{lem.est1}
There exists a constant $C>0$, independent of the mesh parameter $\eta$, such that the following bounds hold:
\begin{itemize}
\item[\rm (i)] Discrete masses:
\begin{align*}
  \max_{k=1,\ldots,N_T}\sum_{K\in\T}\m(K)(N_K^k+P_K^k+Q_K^k) \le C.
\end{align*}
\item[\rm (ii)] Discrete entropy and electric energy:
\begin{align*}
  \sum_{K\in\T}\m(K)\big(N_K^k\log N_K^k + P_K^k\log P_K^k
  + Q_K^k\log Q_K^k\big)
  + \sum_{\sigma\in\E}\m(\Delta_\sigma)|\na_\sigma^{\Delta x}V^k|^2
  \le C.
\end{align*}
\item[\rm (iii)] Dissipation terms:
\begin{align*}
  \sum_{k=1}^{N_T}\Delta t\big(\mathcal D[N^k,-V^k] 
  + \mathcal D[P^k,V^k] + \mathcal D[Q^k,V^k]\big) \le C.
\end{align*}
\end{itemize}
\end{lemma}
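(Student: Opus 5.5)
The plan is to sum the discrete free energy inequality of Proposition \ref{prop.dei} over time and close the estimate with a discrete Gronwall argument. The driving quantity is the free energy $\mathcal E_{\rm free}^k$, which is bounded below. Multiplying Proposition \ref{prop.dei} by $\Delta t$ and summing over $k=1,\ldots,j$ gives
\begin{align*}
  \mathcal E_{\rm free}^j + \frac12\sum_{k=1}^j\Delta t\big(\mathcal D[N^k,-V^k]+\mathcal D[P^k,V^k]+\mathcal D[Q^k,V^k]\big)
  \le \mathcal E_{\rm free}^0 + C\sum_{k=1}^j\Delta t\, M^k,
\end{align*}
where $M^k:=\sum_K\m(K)(N_K^k+P_K^k+Q_K^k)$. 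Since the dissipation terms are nonnegative, they can be dropped at first.

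\textbf{Step 1: the initial energy is bounded independently of the mesh.} By Jensen's inequality, $\sum_K\m(K)U_K^0\log U_K^0\le\int_\Omega U^{\rm in}\log U^{\rm in}dx$, and this is finite by (H2). The cross terms $-U_K^0\log U_K^D$ require $\log N^D$, $\log P^D$ and $V^D$ to be bounded. I will take this from (H3) as a standing assumption, namely that the extensions are bounded with $N^D,P^D$ bounded away from zero, since I do not see it derived anywhere earlier. The term $\mathcal G(V^0;V^D)$ is controlled through the discrete Poisson equation for $V^0$, using the $L^2$ bound on $A$ and the $L\log L$ data. One caveat: if $V^0$ is only defined in an $L^1$ sense, I would need an $H^{-1}$-type estimate, or else start the sum so that $\mathcal G(V^0;V^D)$ is avoided.

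\textbf{Step 2: the free energy controls the masses.} Since $s\log(s/a)-s+a\ge s/2-C(a)$, with $C(a)$ bounded when $a$ ranges over a bounded set, the elementary inequality $s\log(s/a)-s+a\ge \frac12 s - C$ with uniform $C$ gives $M^k\le 2\mathcal E_{\rm free}^k + C$. Combining this with the summed inequality yields $M^j\le C+C\sum_{k\le j}\Delta t M^k$. A discrete Gronwall lemma then closes the estimate. It is applied in implicit form and needs $\Delta t\,C<1/2$, i.e.\ a smallness condition on the time step that I expect is harmless for $\eta\to0$; for large $\Delta t$ one could instead argue directly. This proves (i). Feeding (i) back into the summed inequality bounds $\mathcal E_{\rm free}^j$ and $\sum_k\Delta t\,\mathcal D$ uniformly, which gives (iii).

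\textbf{Step 3: from the free energy to the entropy and the potential.} The bound on $\mathcal E_{\rm free}^j$ bounds $\mathcal H$, so $\sum\m(K)U\log U\le \mathcal H + \sum\m(K)U|\log U^D|+C$, which is bounded by (i). For the potential, the bound on $\mathcal G$ gives $\sum_\sigma\tau_\sigma|\D_{K,\sigma}(V^k-V^D)|^2\le C$. By \eqref{2.mdelta} and \eqref{2.grad}, $\m(\Delta_\sigma)|\na_\sigma^{\Delta x}V|^2=d\,\tau_\sigma|\D_{K,\sigma}V|^2$, and the triangle inequality together with the mesh estimate for $V^D\in W^{1,\infty}$ then bounds $\sum_\sigma\tau_\sigma|\D_{K,\sigma}V^D|^2\le C\sum(\m(K)+\m(L))\le C$.

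I expect the main obstacle to be the uniform-in-mesh boundedness of $\mathcal E^0_{\rm free}$, in particular the electric part, together with the requirement that $V^D$ itself lies in $W^{1,\infty}$. Hypothesis (H3) only grants this for the combinations $\log N^D-V^D$ and $\log P^D+V^D$. I would address this by assuming the data regular enough, or by taking $\log N^D$, $\log P^D\in L^\infty$ so that it follows from (H3).
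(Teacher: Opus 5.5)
Your proof is correct and follows the paper's route: sum the discrete free energy inequality of Proposition \ref{prop.dei} over $k$ and read off the bounds. It is, however, more careful than the paper's two-line proof, which obtains (ii)--(iii) first and only then (i). Since the right-hand side of Proposition \ref{prop.dei} contains $\sum_K\m(K)(N_K^k+P_K^k+Q_K^k)$, the masses must be controlled first. Your bound $\mathcal H(u|a)\ge\frac12\sum_K\m(K)u_K-(\sqrt{e}-1)\sum_K\m(K)a_K$, combined with the implicit discrete Gronwall step, is exactly what closes the argument. The restriction on $\Delta t$ can even be dropped: using $s\log(s/a)-s+a\ge\lambda s-(e^\lambda-1)a$ with $\lambda=2CT$ gives $\mathcal E_{\rm free}^k\le 2\mathcal E_{\rm free}^0+C'$ for every $\Delta t\le T$.

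The additional data assumptions you flag are used tacitly by the paper as well. These are two-sided bounds on $N^D$, $P^D$, $e^{-V^D}$, a discrete $H^1$ bound on $V^D$, and a mesh-uniform bound on $\mathcal E^0_{\rm free}$. Note that the electric part $\mathcal G(V^0;V^D)$ is not controlled by $L\log L$ data alone when $d\ge 3$, since $L\log L\not\subset H^{-1}$ there. So the claim in your Step~1 has to be taken as an assumption rather than derived.

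One small slip in your closing remark: $\log N^D\in L^\infty$ together with (H3) only yields $V^D\in L^\infty$, not $V^D\in W^{1,\infty}$. For the gradient part of (ii) you need, e.g., $\log N^D\in W^{1,\infty}$ or a direct assumption such as $V^D\in H^1(\Omega)$.
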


\begin{proof}
The estimates (ii) and (iii) are direct consequences of the discrete free energy inequality after summation over $k=1,\ldots,N_T$. Then bound (ii) implies the control of the total masses, proving estimate (i).
\end{proof}

Recall definition \eqref{2.upwind} of the upwind density $\widehat{u}_\sigma[\phi]$. We set
\begin{align}\label{4.widehatN}
  \widehat N_\sigma^k := \widehat N_\sigma^k(-V^k), \quad
  \widehat P_\sigma^k := \widehat P_\sigma^k(V^k), \quad
  \widehat Q_\sigma^k := \widehat Q_\sigma^k(V^k).
\end{align} 

\begin{lemma}[Uniform refined-flux and $L^1$-gradient bounds]
\label{lem.flux}
There exists a constant $C>0$, independent of the mesh parameter $\eta$, such that
\begin{align}
  \sum_{k=1}^{N_T}\Delta t\sum_{\sigma\in\E}\m(\Delta_\sigma)
  \bigg|B(|\D_{K,\sigma}V^k|)\na_\sigma^{\Delta x}(N^k)^{1/2}
  - \frac{\widehat N_\sigma^k}{(\overline N_\sigma^k)^{1/2}}\bigg|^2
  &\le C, \nonumber \\
  \sum_{k=1}^{N_T}\Delta t\sum_{\sigma\in\E}\m(\Delta_\sigma)
  \bigg|B(|\D_{K,\sigma}V^k|)\na_\sigma^{\Delta x}(P^k)^{1/2}
  + \frac{\widehat P_\sigma^k}{(\overline P_\sigma^k)^{1/2}}\bigg|^2
  &\le C, \label{4.dissP} \\
  \sum_{k=1}^{N_T}\Delta t\sum_{\sigma\in\E}\m(\Delta_\sigma)
  \bigg|B(|\D_{K,\sigma}V^k|)\na_\sigma^{\Delta x}(Q^k)^{1/2}
  + \frac{\widehat Q_\sigma^k}{(\overline Q_\sigma^k)^{1/2}}\bigg|^2
  &\le C. \nonumber 
\end{align}
Moreover, we have
\begin{align}\label{4.L1grad}
  \sum_{k=1}^{N_T}\Delta t\sum_{\sigma\in\E}\m(\Delta_\sigma)
  \big(|\na_\sigma^{\Delta x}(N^k)^{1/2}|
  + |\na_\sigma^{\Delta x}(P^k)^{1/2}|
  + |\na_\sigma^{\Delta x}(Q^k)^{1/2}|\big) \le C.
\end{align}
\end{lemma}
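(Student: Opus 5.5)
The plan is to read off both estimates edgewise from Section \ref{sec.SG} and then sum them using the global bounds of Lemma \ref{lem.est1}. The statement has presumably dropped a factor $\na_\sigma^{\Delta x}V^k$ (or $\D_{K,\sigma}V^k/\dist_\sigma$) in the second term, since otherwise it would not be a vector and would not match Proposition \ref{prop.square}. I interpret it as the square identity rescaled by the approximate gradient \eqref{2.grad}.

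\textbf{Refined-flux bounds.} For $u=P^k$ and $\phi=V^k$, Proposition \ref{prop.square} gives
\begin{align*}
  \mathcal P_{K,\sigma}[P^k,V^k] = \tau_\sigma^*\Big(2B(|\D_{K,\sigma}V^k|)\D_{K,\sigma}\sqrt{P^k}
  + \frac{\widehat P_\sigma^k}{(\overline P^k_\sigma)^{1/2}}\D_{K,\sigma}V^k\Big)^2,
\end{align*}
with $\tau_\sigma^*\ge\tau_\sigma/4$. By \eqref{2.mdelta}, $\tau_\sigma|\D_{K,\sigma}w|^2=\m(\sigma)\dist_\sigma|\D_{K,\sigma}w/\dist_\sigma|^2 = d^{-1}\m(\Delta_\sigma)|\na^{\Delta x}_\sigma w|^2$, and the same rescaling applies to any edge quantity of the form $a\D_{K,\sigma}\sqrt{u}+b\D_{K,\sigma}\phi$. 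Hence
\begin{align*}
  \m(\Delta_\sigma)\Big|B(|\D_{K,\sigma}V^k|)\na_\sigma^{\Delta x}\sqrt{P^k}+\frac{\widehat P^k_\sigma}{2(\overline P^k_\sigma)^{1/2}}\na^{\Delta x}_\sigma V^k\Big|^2 \le C\,\mathcal P_{K,\sigma}[P^k,V^k].
\end{align*}
Multiplying by $\Delta t$ and summing over $\sigma$ and $k$, Lemma \ref{lem.est1}(iii) gives \eqref{4.dissP}. The arguments for $Q$ and for $N$ (with $\phi=-V^k$, which flips the sign) are identical. Boundary edges are harmless: on Neumann edges the flux vanishes, so both sides are zero, and on Dirichlet edges $\mathcal P_{K,\sigma}$ is included in $\mathcal D$.

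\textbf{$L^1$-gradient bound.} Apply Proposition \ref{prop.second} edgewise. Since $\m(\Delta_\sigma)|\na^{\Delta x}_\sigma\sqrt u|=d\,\m(\Delta_\sigma)|\D_{K,\sigma}\sqrt u|/\dist_\sigma=\m(\sigma)|\D_{K,\sigma}\sqrt u|$ by \eqref{2.mdelta}, we obtain
\begin{align*}
  \sum_\sigma\m(\Delta_\sigma)|\na_\sigma^{\Delta x}\sqrt{u}|\lesssim \sum_\sigma\mathcal P_{K,\sigma}[u,\phi]+\sum_\sigma\tau_\sigma|\D_{K,\sigma}V^k|^2+\sum_\sigma\m(\Delta_\sigma)(1+\overline u_\sigma).
\end{align*}
The terms on the right are controlled as follows:
\begin{itemize}
\item The first sum is $\mathcal D$, and after multiplying by $\Delta t$ and summing over $k$ it is bounded by Lemma \ref{lem.est1}(iii).
\item The second sum is the discrete $H^1$ seminorm of $V^k$, which Lemma \ref{lem.est1}(ii) bounds uniformly in $k$, so its time sum is at most $CT$.
\item For the third sum, $\sum_\sigma\m(\Delta_\sigma)\le \m(\Omega)$. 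Since $\overline u_\sigma\le (u_K+u_L)/2$, it suffices to show $\m(\Delta_\sigma)\le C\min\{\m(K),\m(L)\}$. Together with the fact that each cell has boundedly many edges, this reduces $\sum_\sigma\m(\Delta_\sigma)\overline u_\sigma$ to $C\sum_K\m(K)u_K$, which is bounded by Lemma \ref{lem.est1}(i).
\end{itemize}

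\textbf{Main obstacle.} The delicate step is the comparison $\m(\Delta_\sigma)\le C\m(K)$, together with a bounded number of edges per cell. This is where the mesh regularity \eqref{2.meshreg} enters. The half-diamond $\Delta_{K,\sigma}$ (the convex hull of $\sigma\cup\{x_K\}$) lies in $K$. By the first inequality in \eqref{2.meshreg}, $\m(\Delta_{K,\sigma})=\m(\sigma)d(x_K,\sigma)/d\ge \xi_1\m(\sigma)\dist_\sigma/d=\xi_1\m(\Delta_\sigma)$, and summing over $\sigma\in\E_K$ gives $\sum_{\sigma\in\E_K}\m(\Delta_\sigma)\le\xi_1^{-1}\m(K)$. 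This settles the third sum without requiring a bound on the number of edges. The same comparison also justifies the Dirichlet-edge contributions, where $\overline u_\sigma$ involves boundary data $N^D_\sigma$; these are bounded because (H3) gives the data in $L^1$.
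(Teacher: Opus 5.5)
Your proposal is essentially the paper's proof. The refined-flux bounds come from Proposition~\ref{prop.square} with $\tau_\sigma^*\ge\tau_\sigma/4$, rescaled via \eqref{2.mdelta} and \eqref{2.grad} and summed with Lemma~\ref{lem.est1}(iii); your reinsertion of the missing factor $\na_\sigma^{\Delta x}V^k$ is exactly what the paper's ``this is equivalent to \eqref{4.dissP}'' presupposes. The $L^1$ bound comes from Proposition~\ref{prop.second} combined with Lemma~\ref{lem.est1} and the mesh comparison of Lemma~\ref{lem.meshregul}.

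Your half-diamond summation $\sum_{\sigma\in\E_K}\m(\Delta_\sigma)\le\xi_1^{-1}\m(K)$ is slightly cleaner than the paper's per-edge bound $\m(\Delta_\sigma)\le C\m(K)$, which tacitly needs a bounded number of edges per cell. The one point that neither you nor the paper treats is $\overline Q_\sigma$ on $\Gamma_D$-edges: there $Q_{K,\sigma}=e^{-\D_{K,\sigma}V^k}Q_K^k$ is fixed by the no-flux condition rather than by boundary data, so it is not directly controlled by the mass bound.
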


\begin{proof}
We use the refined dissipation identity from Proposition \ref{prop.square} and the discrete mass bounds from Lemma \ref{lem.est1} to infer from Proposition \ref{prop.dei} for the pair $(P,V)$ that
\begin{align*}
  \sum_{k=1}^{N_T}\Delta t\sum_{\sigma\in\E}\tau_\sigma
  \bigg|2B(|\D_{K,\sigma}V^k|)\D_{K,\sigma}(P^k)^{1/2}
  + \frac{\widehat P_\sigma^k}{(\overline P_\sigma^k)^{1/2}}
  \D_{K,\sigma} V^k\bigg|^2 \le C.
\end{align*} 
Because of definition \eqref{2.grad} of the discrete gradient, this is equivalent to \eqref{4.dissP}. The estimates for $N$ and $Q$ are derived in the same way. 

It remains to prove \eqref{4.L1grad}. By Proposition \ref{prop.second}, the pair $(P,V)$ satisfies
\begin{align}\label{4.aux} 
  \sum_{\sigma\in\E}\m(\sigma)|\D_{K,\sigma}(P^k)^{1/2}|
  \le C\sum_{\sigma\in\E}\big(\mathcal P_{K,\sigma}[P^k,V^k]
  + \tau_\sigma|\D_{K,\sigma}V^k|^2
  + \m(\Delta_\sigma)(1+\overline{P}_\sigma^k)\big).
\end{align}
The last term is estimated by using $\overline P_\sigma\le \frac12(P_K^k+P_{K,\sigma}^k)$ and $\m(\Delta_\sigma)\le C\m(K)$ (which follows from the mesh regularity; see Lemma \ref{lem.meshregul} in Appendix \ref{sec.app}):
\begin{align*}
  \sum_{\sigma\in\E}\m(\Delta_\sigma)(1+\overline{P}_\sigma^k)
  \le C\sum_{K\in\T}\m(K)(1+P_K^k).
\end{align*}
Hence, summing \eqref{4.aux} over $k=1,\ldots,N_T$ and taking into account that the discrete total mass is bounded by Lemma \ref{lem.est1}, we obtain \eqref{4.L1grad}. The estimates for $N^k$ and $Q^k$ follow in the same way by replacing $(P^k,V^k)$ by $(N^k,-V^k)$ and $(Q^k,V^k)$. 
\end{proof}


\subsection{Proof of Theorem \ref{thm.ex}} 

We study first a discrete Fokker--Planck problem with prescribed potential. Let $k\in\{1,\ldots,N_T\}$ and let $u^{k-1}=(u^{k-1}_K)_{K\in\T}\in\R^{\#\mathcal T}$ be a cellwise nonnegative density and let $\phi^k=(\phi_K^k)_{K\in\T}\in\R^{\#\mathcal T}$. We consider the linear problem
\begin{align}\label{4.eq}
   \frac{\m(K)}{\Delta t}(u_K^k-u_K^{k-1})
  - \sum_{\sigma\in\E_K}\F_{K,\sigma}[u^k,\phi^k] = 0
  \quad\mbox{for }K\in\T,
\end{align}
supplemented with the mixed boundary conditions
\begin{align}\label{4.bc}
  u_\sigma^k = u_\sigma^D\quad\mbox{for }\sigma\in\E_{\rm ext}^D, \quad
  \F_{K,\sigma}[u^k,\phi^k] = 0 
  \quad\mbox{for }\sigma\in\E_{{\rm ext},K}^N.
\end{align}
If $\Gamma_D=\emptyset$, the boundary conditions consist of the homogeneous no-flux conditions only. 

\begin{lemma}\label{lem.pos}
Let $u^{k-1}\not\equiv 0$ be nonnegative. Then problem \eqref{4.eq}--\eqref{4.bc} (including the case $\Gamma_D=\emptyset$) admits a unique solution $u^k\in \R^{\#\mathcal T}$ satisfying $u_K^k>0$ for all $K\in\T$.
\end{lemma}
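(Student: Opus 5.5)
Since $\phi^k$ is prescribed, \eqref{4.eq}--\eqref{4.bc} is a square linear system $\mathbb{A}u^k=b$ for the unknowns $(u_K^k)_{K\in\T}$. The plan is to show that $\mathbb{A}$ is an M-matrix and that $b$ is nonnegative and nonzero; uniqueness and strict positivity then follow together. The first step is to eliminate the Neumann edge values. For $\sigma\in\E_{{\rm ext},K}^N$, the condition $\F_{K,\sigma}=0$ fixes $u_\sigma$ as a multiple of $u_K$, and the flux contribution of that edge vanishes. For $\sigma\in\E_{\rm ext}^D$, the term $\tau_\sigma B(-\D_{K,\sigma}\phi^k)u_\sigma^D\ge 0$ is known, so we move it to the right-hand side. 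Multiplying by $\Delta t/\m(K)$, row $K$ becomes
\begin{align*}
  \Big(\m(K)+\Delta t\sum_{\sigma\in\E_K\setminus\E^N_{{\rm ext},K}}\tau_\sigma B(\D_{K,\sigma}\phi^k)\Big)u_K^k
  - \Delta t\sum_{\sigma=K|L}\tau_\sigma B(-\D_{K,\sigma}\phi^k)u_L^k
  = \m(K)u_K^{k-1} + \Delta t\sum_{\sigma\in\E^D_{{\rm ext},K}}\tau_\sigma B(-\D_{K,\sigma}\phi^k)u_\sigma^D .
\end{align*}
The right-hand side $b$ is nonnegative. Since $u^{k-1}\not\equiv 0$ and $\m(K)>0$, it is not identically zero.

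The main step is the M-matrix property. The off-diagonal entries are $-\Delta t\,\tau_\sigma B(\cdot)<0$ for neighbours and zero otherwise, because $B>0$, so $\mathbb A$ is a Z-matrix. Row dominance fails in general because $B(\D\phi)\neq B(-\D\phi)$. Instead I would use \emph{column} dominance. For a column $L$, we sum the entries over the rows $K$. Each interior edge $\sigma=K|L$ contributes $\Delta t\,\tau_\sigma B(\D_{L,\sigma}\phi^k)$ to the diagonal entry of $L$ and $-\Delta t\,\tau_\sigma B(-\D_{K,\sigma}\phi^k)$ to row $K$. These cancel, since $\D_{L,\sigma}\phi=-\D_{K,\sigma}\phi$. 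This cancellation is the discrete mass conservation of the flux. What remains in column $L$ is $\m(L)>0$, plus the nonnegative Dirichlet terms. Hence $\mathbb A^T$ is strictly diagonally dominant with positive diagonal, so $\mathbb A$ is a nonsingular M-matrix. This holds even when $\Gamma_D=\emptyset$, because the time-derivative term $\m(L)$ alone gives strictness. Consequently $\mathbb A^{-1}\ge 0$, which gives existence, uniqueness, and $u^k=\mathbb A^{-1}b\ge 0$.

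For strict positivity, I would use irreducibility. The mesh is connected, since $\Omega$ is a domain, and every neighbour coupling is strictly negative, so $\mathbb A$ is an irreducible nonsingular M-matrix and therefore $\mathbb A^{-1}>0$ entrywise. With $b\ge 0$ and $b\ne 0$, this gives $u_K^k>0$ for every $K$.

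An alternative, more elementary argument would be as follows. Take $K_0$ with $u_{K_0}^k$ minimal and suppose $u_{K_0}^k\le 0$. Written in column-conservative form, the equation forces all neighbours to share the minimum, and propagating along the connected mesh reaches a contradiction with $b\ne 0$. I expect the main care to lie in the bookkeeping of the Neumann edge elimination, where the sign of the multiple of $u_K$ depends on $B$ and must be checked, and in verifying connectivity of the mesh graph.
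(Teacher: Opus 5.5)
Your proof is correct, and it follows a genuinely different route from the paper.

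\textbf{The paper's route.} The paper takes existence from the cited result [Bes12, Prop.~1] and proves positivity by contradiction. If $u^k_{K_0}=0$, the equation at $K_0$ says that a sum of terms $\frac{\m(K_0)}{\Delta t}u^{k-1}_{K_0}$, $\tau_\sigma B(-\D_{K_0,\sigma}\phi^k)u^k_{K_0,\sigma}$ and Dirichlet contributions equals zero. So all of them vanish, and connectivity propagates $u^k=u^{k-1}=0$ to all of $\T$.

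\textbf{Your route.} You use the matrix structure instead. Flux conservation, $\D_{L,\sigma}\phi=-\D_{K,\sigma}\phi$, makes the interior contributions cancel in every column. What remains is $\m(L)$ plus nonnegative Dirichlet terms, so $\mathbb A^T$ is a strictly diagonally dominant Z-matrix with positive diagonal, even when $\Gamma_D=\emptyset$. This gives existence, uniqueness and $u^k\ge0$ at once. Irreducibility then upgrades $\mathbb A^{-1}\ge0$ to $\mathbb A^{-1}>0$, which is the linear-algebra counterpart of the paper's propagation of zeros.

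\textbf{Comparison.} Your route is self-contained, and it settles a point the paper leaves implicit. The paper's claim that ``all terms are nonnegative'' at $K_0$ presupposes $u^k\ge0$, and so does its reduction of ``not positive'' to ``$u^k_{K_0}=0$''. The paper never proves $u^k\ge0$, and it only inherits uniqueness from the citation. The paper's route is shorter once the cited result is accepted.

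\textbf{Minor corrections.}

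First, you say you multiply row $K$ by $\Delta t/\m(K)$. Column dominance is not invariant under nonuniform row scaling. The row you actually display, multiplied by $\Delta t$ only, is the correct one, so keep that.

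Second, your alternative minimum-principle sketch does not work in the variable $u$ itself. At a minimizing cell the row is not diagonally dominant, because $B(\D_{K,\sigma}\phi)-B(-\D_{K,\sigma}\phi)=-\D_{K,\sigma}\phi$ has no sign. It does work for the Slotboom variable $w=e^{\phi}u$. There $\F_{K,\sigma}[u,\phi]=\tau_\sigma B(\D_{K,\sigma}\phi)e^{-\phi_K}\D_{K,\sigma}w$, and the coefficient $B(\D_{K,\sigma}\phi)e^{-\phi_K}>0$ is symmetric in $K$ and $L$.

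Third, the Neumann elimination needs no sign check. The flux vanishes on those edges, so $u_\sigma$ never enters the system.
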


\begin{proof}
The existence of a solution to \eqref{4.eq}--\eqref{4.bc} follows from \cite[Prop.~1]{Bes12}. It remains to prove the positivity. Suppose, by contradiction, that $u_{K_0}^k=0$ for some $K_0\in\T$. Then equation \eqref{4.eq} becomes
\begin{align*}
  \frac{\m(K_0)}{\Delta t}u_{K_0}^{k-1}
  + \sum_{\sigma\in\E_{K_0}\cap\E_{\rm int}}\tau_\sigma
  B(-\D_{K_0,\sigma}\phi^k)u_{K_0,\sigma}^k
  + \sum_{\sigma\in\E_{{\rm ext},K_0}^D}\tau_\sigma
  B(-\D_{K_0,\sigma}\phi^k)u_\sigma^D = 0.
\end{align*}
All terms on the left-hand side are nonnegative. Hence, each term vanishes. In particular, $u_{K_0}^{k-1}=0$ and $u_{K_0,\sigma}^k=0$ for all interior neighboring cells. Repeating this argument for each neighbor and using the connectivity of the mesh, we conclude that $u_K^k=0$ and $u_K^{k-1}=0$ for all $K\in\T$. This contradicts the assumption $u^{k-1}\not\equiv 0$.
\end{proof}

We continue with the proof of Theorem \ref{thm.ex}. Let $k\in\{1,\ldots,N_T\}$ and assume that $(N^{k-1},P^{k-1},Q^{k-1})$ is given and positive. For a given triple $X^*=(N^*,P^*,Q^*)\in(\R^{\#\mathcal T})^3$, let $V^*=V[X^*]\in\R^{\#\mathcal T}$ be the unique solution to the discrete Poisson problem
\begin{align*}
  \lambda^2\sum_{\sigma\in\E_K}\tau_\sigma\D_{K,\sigma}V^*
  = \m(K)(N_K^*-P_K^*-Q_K^*+A_K)\quad\mbox{for }K\in\T,
\end{align*}
with the boundary values prescribed in \eqref{3.bc}. Next, we solve the linear problems
\begin{align*}
  \frac{\m(K)}{\Delta t}(N_K^k-N_K^{k-1}) 
  - \sum_{\sigma\in\E_K}\F_{K,\sigma}[N^k,-V^*] &= 0, \\
  \frac{\m(K)}{\Delta t}(P_K^k-P_K^{k-1}) 
  - \sum_{\sigma\in\E_K}\F_{K,\sigma}[P^k,V^*] &= 0, \\
  \frac{\m(K)}{\Delta t}(Q_K^k-Q_K^{k-1}) 
  - \sum_{\sigma\in\E_K}\F_{K,\sigma}[Q^k,V^*] &= 0
  \quad\mbox{for }K\in\T,
\end{align*}
together with the boundary conditions prescribed in \eqref{3.bc}. This defines the fixed-point mapping $S:(\R^{\#\mathcal T})^3\to (\R^{\#\mathcal T})^3$, $S(X^*) = (N^k,P^k,Q^k)$. By construction, $S$ is continuous. To apply Schaefer's fixed-point theorem, it remains to prove that
\begin{align*}
  \{X\in (\R^{\#\mathcal T})^3: X = \theta S(X)\mbox{ for some }
  \theta\in[0,1]\}
\end{align*}
is bounded. This follows from the a priori estimates established above. By Schaefer's theorem \cite[Sec.~9.2.2]{Eva98}, $S$ admits a fixed point, which is a solution to \eqref{3.NPQ}--\eqref{3.bc}. 


\section{Localized coercivity and interior estimates}\label{sec.chi}

We derive an interior Fisher information estimate using the global bounds established in the previous section and the edgewise coercivity estimate of Proposition \ref{prop.first}. The crucial step is the treatment of the cross terms. Because of the mixed-type boundary conditions, this step cannot be carried out up to the Dirichlet boundary and therefore requires a localization procedure away from $\Gamma_D$. 
Let $\eps>0$ and define
\begin{align*}
  \Omega_D^\eps := \{x\in\overline\Omega:d(x,\Gamma_D)>\eps\},
  \quad \Omega_D^{2\eps} := \{x\in\overline\Omega:
  d(x,\Gamma_D)>2\eps\}.
\end{align*}
Let $\chi^\eps\in C_0^\infty(\Omega_D^\eps)$ be such that $0\le\chi^\eps\le 1$ in $\Omega_D^\eps$ and $\chi^\eps\equiv 1$ in $\Omega_D^{2\eps}$. We set $\chi_K^\eps:=\chi^\eps(x_K)$ for $K\in\T$ and $\chi_\sigma^\eps:=\chi^\eps(x_\sigma)$ for $\sigma\in\E_{\rm ext}$, where $x_\sigma:=\m(\sigma)^{-1}\int_\sigma xdx$, and assume that
\begin{align}\label{5.chieps}
  \frac{|\D_{K,\sigma}\chi^\eps|}{\dist_\sigma}
  \le \frac{C}{\eps}\quad\mbox{for }\sigma\in\E_K.
\end{align}

For a positive discrete density $u$ and a discrete potential $\phi$, we recall the definition of the edgewise entropy production and cross term:
\begin{align*}
  \mathcal P_{K,\sigma}[u,\phi] = \F_{K,\sigma}[u,\phi]
  \D_{K,\sigma}(\log u + \phi), \quad
  \mathcal C_{K,\sigma}[u,\phi] 
  = \tau_\sigma\D_{K,\sigma}u\D_{K,\sigma}\phi.
\end{align*}
It follows from $0\le\chi^\eps_K\le 1$ that
\begin{align*}
  \sum_{\sigma=K|L\in\E}\mathcal P_{K,\sigma}[u,\phi](\chi_K^\eps)^2
  \le \mathcal D[u,\phi]. 
\end{align*}
Let $(N^k,P^k,Q^k,V^k)_{k=0,\ldots,N_T}$ be the discrete solution given by Theorem \ref{thm.ex}. We introduce the localized cross term
\begin{align} \label{5.Ctot}
  \mathcal C_{\rm tot}^{\eps,k} :=\sum_{\sigma=K|L\in\E}
  \big(\mathcal C_{K,\sigma}[N^k,-V^k]
  + \mathcal C_{K,\sigma}[P^k,V^k] 
  + \mathcal C_{K,\sigma}[Q^k,V^k]\big)(\chi_K^\eps)^2.
\end{align}


\subsection{A localized lower coercivity bound} 

For a discrete positive density $u$, we define the localized Fisher information $\mathcal I^\eps[u]$ and the total localized Fisher information $\mathcal I_{\rm tot}^{\eps,k}$ by
\begin{align}\label{5.Itot}
  \mathcal I^\eps[u] := \sum_{\sigma=K|L\in\E}\tau_\sigma
  |\D_\sigma\sqrt{u}|^2 (\chi_K^\eps)^2, \quad 
  \mathcal I_{\rm tot}^{\eps,k} := \mathcal I^\eps[N^k]
  + \mathcal I^\eps[P^k] + \mathcal I^\eps[Q^k].
\end{align}
The following lemma is a consequence of Proposition \ref{prop.first}. 

\begin{lemma}\label{lem.first}
It holds for $k=1,\ldots,N_T$ that
\begin{align*}
  \mathcal I_{\rm tot}^{\eps,k} + \mathcal C_{\rm tot}^{\eps,k}
  \lesssim \mathcal D[N^k,-V^k] + \mathcal D[P^k,V^k]
  + \mathcal D[Q^k,V^k],
\end{align*}
recalling 
definition \eqref{3.D} of the dissipation term $\mathcal D$.
\end{lemma}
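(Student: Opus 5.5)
The plan is to apply Proposition \ref{prop.first} edge by edge, weight it by $(\chi_K^\eps)^2$, and sum. The structure of the statement suggests that Proposition \ref{prop.first} should be read as
\begin{align*}
  \tau_\sigma|\D_{K,\sigma}\sqrt{u}|^2 + \mathcal C_{K,\sigma}[u,\phi]
  \lesssim \mathcal P_{K,\sigma}[u,\phi],
\end{align*}
that is, with the cross term on the left-hand side. The sign convention has to be checked against the final line of that proof. That line gives
\begin{align*}
  \tfrac{\tau_\sigma}{2}|\D_{K,\sigma}\sqrt u|^2
  \le \mathcal P_{K,\sigma}[u,\phi] - \tfrac52\mathcal C_{K,\sigma}[u,\phi],
\end{align*}
which, taken literally, places $\mathcal C_{K,\sigma}$ with a negative sign on the right. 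Written as $\frac{\tau_\sigma}{2}|\D_{K,\sigma}\sqrt u|^2 + \frac52\mathcal C_{K,\sigma}[u,\phi] \le \mathcal P_{K,\sigma}[u,\phi]$, it has exactly the form $\mathcal I + \mathcal C \lesssim \mathcal D$ needed here, up to the harmless constants $\frac12$ and $\frac52$. I take the proposition in this form. If instead one needs a single implicit constant multiplying $\mathcal I+\mathcal C$, I would rescale: multiplying through by $\frac25$ gives $\frac15\tau_\sigma|\D\sqrt u|^2 + \mathcal C_{K,\sigma}\le \frac25\mathcal P_{K,\sigma}$. Since $\mathcal I\ge0$, this yields $c\,(\mathcal I+\mathcal C)\le \mathcal P$ only when $\mathcal C\ge0$. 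So the cleaner route is to state the lemma with explicit constants, $\frac15\mathcal I^{\eps,k}_{\rm tot} + \mathcal C^{\eps,k}_{\rm tot}\le \frac25\sum\mathcal D$, which is the intended meaning of $\lesssim$.

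The argument has four steps.

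First, for each edge $\sigma=K|L$ and each pair $(u,\phi)\in\{(N^k,-V^k),(P^k,V^k),(Q^k,V^k)\}$, I apply Proposition \ref{prop.first} as above. All densities are positive by Theorem \ref{thm.ex}, so the proposition applies. Boundary edges use $u_{K,\sigma}$ and $\phi_{K,\sigma}$ as defined through \eqref{3.bc}; the proposition is purely algebraic in the two values, so these edges cause no problem.

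Second, I multiply each edge inequality by the nonnegative weight $(\chi_K^\eps)^2$. Nonnegativity preserves the inequality, and the cross term keeps its sign with the same weight, which reproduces exactly the summands of $\mathcal C_{\rm tot}^{\eps,k}$ in \eqref{5.Ctot}.

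Third, I sum over edges and over the three species. This gives $\mathcal I^{\eps,k}_{\rm tot}$ on the left by definition \eqref{5.Itot}, together with $\mathcal C^{\eps,k}_{\rm tot}$. On the right it gives $\sum_\sigma \mathcal P_{K,\sigma}(\chi_K^\eps)^2$ for each species.

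Fourth, I bound the weighted right-hand side by $\mathcal D$ using the observation already made before the lemma: $\mathcal P_{K,\sigma}\ge0$ and $0\le(\chi_K^\eps)^2\le1$.

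I expect only one point to need care. Each edge is summed once, with the orientation $K$ fixed, whereas the weight $\chi_K^\eps$ depends on that orientation. Because the same $K$ is used consistently in $\mathcal I^\eps$, in $\mathcal C_{\rm tot}^{\eps,k}$, and in the weighted dissipation sum, this is harmless. Otherwise the proof is a direct summation, and the main hazard is keeping the sign of the cross term consistent with Proposition \ref{prop.first}.
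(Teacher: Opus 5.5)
Your proposal is correct and follows the paper's proof. You apply Proposition \ref{prop.first} edgewise to $(N^k,-V^k)$, $(P^k,V^k)$ and $(Q^k,V^k)$, multiply by $(\chi_K^\eps)^2$, sum over edges and species, and bound the weighted productions by $\mathcal D$ using $\mathcal P_{K,\sigma}\ge 0$ and $(\chi_K^\eps)^2\le 1$. Your explicit form $\frac15\mathcal I_{\rm tot}^{\eps,k}+\mathcal C_{\rm tot}^{\eps,k}\le\frac25\big(\mathcal D[N^k,-V^k]+\mathcal D[P^k,V^k]+\mathcal D[Q^k,V^k]\big)$ is more precise than the paper's one-line argument, and it is exactly what the proof of Lemma \ref{lem.grad} needs when combining this lemma with Lemma \ref{lem.second}.
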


\begin{proof}
We apply for $(P^k,V^k)$ the edgewise estimate in Proposition \ref{prop.first} and sum over $\sigma\in\E$:
\begin{align*}
  \sum_{\sigma=K|L\in\E}\mathcal P_{K,\sigma}[P^k,V^k](\chi_K^\eps)^2
  \gtrsim \mathcal I^\eps[P^k] + \sum_{\sigma=K|L\in\E}
  \mathcal C_{K,\sigma}[P^k,V^k](\chi_K^\eps)^2.
\end{align*}
Analogous inequalities hold for $N$ and $Q$, and summing them directly yields the result.
\end{proof}


\subsection{Control of the cross terms}

The cross terms in Lemma \ref{lem.first} can be controlled from below by the discrete electric energy, up to a factor of order $\eps^{-2}$ coming from the cutoff. To control the cutoff transition region, we use a discrete enlargement of $\chi^\eps$. This procedure was not needed in the continuous case \cite[Sec.~2.3]{JJZ23}, where the chain rule $|\na(\chi^\eps)^2| = 2\chi^\eps|\na\chi^\eps| \le C\eps^{-1}(\chi^\eps)^2$ was available. On the discrete level, this rule cannot be used. 

There exists a family $\widetilde\chi^\eps = (\widetilde\chi^\eps_K)_{K\in\T}$ such that 
\begin{align}\label{5.enlarge1}
  (\chi_K^\eps)^2\le (\widetilde\chi_K^\eps)^2
  &\quad\mbox{for }K\in\T, \\
  |\D_{K,\sigma}(\chi^\eps)^2| 
  + |(\widetilde\chi_K^\eps)^2 - (\chi_K^\eps)^2| \le 
  C\eps^{-1}\widetilde\chi_K^\eps h_K
  &\quad\mbox{for }\sigma\in\E_K, \label{5.enlarge2}
\end{align}
where $h_K:=\max_{\sigma\in\E_K}\dist_\sigma$. A concrete construction is given in Lemma \ref{lem.enlarge} in Appendix \ref{sec.app}. 

\begin{lemma}\label{lem.second}
There exist constants $C>0$ and $C'>0$, independent of $\eps$ and the discretization parameter $\eta$, such that for every $k=1,\ldots,N_T$,
\begin{align*}
  \mathcal C_{\rm tot}^{\eps,k} \ge -\frac{C}{\eps^2}
  \sum_{\sigma\in\E}\tau_\sigma|\D_{K,\sigma}V^k|^2 - C',
\end{align*}
where $C'$ depends only on the doping profile $A(x)$  and satisfies $C'=0$ if $A\equiv 0$.
\end{lemma}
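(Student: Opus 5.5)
First collect the three cross terms into a single expression in the net charge. Since $\mathcal C_{K,\sigma}[N^k,-V^k]=-\tau_\sigma\D_{K,\sigma}N^k\D_{K,\sigma}V^k$ and the $P$, $Q$ terms carry a plus sign, we have
\begin{align*}
  \mathcal C_{\rm tot}^{\eps,k} = -\sum_{\sigma=K|L\in\E}\tau_\sigma
  \D_{K,\sigma}\rho^k\,\D_{K,\sigma}V^k\,(\chi_K^\eps)^2,
  \quad \rho^k:=N^k-P^k-Q^k .
\end{align*}
Formally this is $-\int\na\rho\cdot\na V\chi^2$. Integrating by parts and inserting Poisson's equation $\lambda^2\Delta V=\rho+A$ gives $\lambda^{-2}\int\rho(\rho+A)\chi^2$ plus a term with $\na\chi^2$. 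The strategy is to reproduce this discretely.

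\textbf{Step 1: discrete integration by parts.} On every Neumann edge we have $V_{K,\sigma}=V_K$, and on Dirichlet edges $\chi^\eps_\sigma=0$, since $\chi^\eps$ vanishes near $\Gamma_D$. Hence the boundary contributions vanish. The discrete product rule for $\D_{K,\sigma}(\rho\,\chi^2)$ then gives
\begin{align*}
  \mathcal C_{\rm tot}^{\eps,k}
  = \sum_{K\in\T}\rho_K^k(\chi_K^\eps)^2\sum_{\sigma\in\E_K}\tau_\sigma\D_{K,\sigma}V^k
  + R,
\end{align*}
where $R$ collects terms of the form $\tau_\sigma\rho_{K,\sigma}\D_{K,\sigma}(\chi^\eps)^2\D_{K,\sigma}V^k$. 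The orientation bookkeeping of the asymmetric weight $(\chi_K^\eps)^2$ has to be tracked here; it is absorbed into $R$ via \eqref{5.enlarge2}. Applying the discrete Poisson equation \eqref{3.V}, the main term becomes
\begin{align*}
  \lambda^{-2}\sum_K \m(K)(\chi_K^\eps)^2\rho_K^k(\rho_K^k+A_K)
  \ge \frac{1}{2\lambda^2}\sum_K\m(K)(\chi_K^\eps)^2(\rho_K^k)^2
  - \frac{1}{2\lambda^2}\|A\|_{L^2}^2 .
\end{align*}
This lower bound is the source of $C'$, which vanishes when $A\equiv0$.

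\textbf{Step 2: remainder, the main obstacle.} The difficulty is that $R$ carries a weight $\rho_{K,\sigma}$ at the neighbouring cell, while the good term only controls $\rho_K^2(\chi_K^\eps)^2$. The discrete chain rule is unavailable, so I would use the enlarged cutoff. By \eqref{5.enlarge2}, $|\D_{K,\sigma}(\chi^\eps)^2|\le C\eps^{-1}\widetilde\chi^\eps_K h_K$, and the mesh regularity \eqref{2.meshreg} gives $h_K\lesssim\dist_\sigma$. Young's inequality then yields
\begin{align*}
  |R|\le \delta\sum_\sigma \tau_\sigma\dist_\sigma^2\,|\rho_{K,\sigma}|^2(\widetilde\chi^\eps_{K})^2
  + \frac{C}{\delta\eps^2}\sum_\sigma\tau_\sigma|\D_{K,\sigma}V^k|^2 .
\end{align*}
The second sum is exactly the admissible electric-energy term. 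For the first, note that $\tau_\sigma\dist_\sigma^2=d\,\m(\Delta_\sigma)\lesssim\m(K)$ by \eqref{2.mdelta} and Lemma \ref{lem.meshregul}. This bounds it by $C\delta\sum_K\m(K)(\widetilde\chi^\eps_K)^2(\rho_K^k)^2$, after redistributing $\rho_L$ to cell $L$; I would need $\widetilde\chi_K^\eps\lesssim\widetilde\chi_L^\eps$ for neighbours, or else a symmetric construction.

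\textbf{Step 3: closing.} The quantity $(\widetilde\chi^\eps_K)^2$ is not dominated by $(\chi_K^\eps)^2$, so I would split $(\widetilde\chi_K)^2=(\chi_K)^2+[(\widetilde\chi_K)^2-(\chi_K)^2]$. The first part is absorbed by choosing $\delta$ small. The second part is bounded via \eqref{5.enlarge2} by $C\eps^{-1}h_K\widetilde\chi_K$, which is small on the transition layer. There I would bound $\m(K)\rho_K^2$ through the Poisson equation again, $\m(K)|\rho_K+A_K|\le\lambda^2\sum_\sigma\tau_\sigma|\D_{K,\sigma}V|$. Combined with $h_K$ and Cauchy--Schwarz, this gives
\begin{align*}
  \m(K)\rho_K^2\, h_K^2\lesssim \sum_{\sigma\in\E_K}\tau_\sigma|\D_{K,\sigma}V|^2+\m(K)A_K^2 ,
\end{align*}
using $\tau_\sigma h_K^2\lesssim\m(K)$. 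Summing, every leftover term is of the form $C\eps^{-2}\sum_\sigma\tau_\sigma|\D_{K,\sigma}V^k|^2+C\|A\|_{L^2}^2$. Discarding the nonnegative $\rho^2$ term then gives the claim.
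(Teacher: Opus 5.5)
Your Steps 1 and 2 are sound. Your bookkeeping is in fact more accurate than the paper's. The product rule $\D_{K,\sigma}(\rho^k(\chi^\eps)^2)=(\chi_K^\eps)^2\D_{K,\sigma}\rho^k+\rho^k_{K,\sigma}\D_{K,\sigma}(\chi^\eps)^2$ puts the neighbour value into the remainder, whereas the paper writes $I_5$ with $R_K^k$.

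Do not settle the dependency you leave open in Step 2 via $\widetilde\chi_K^\eps\lesssim\widetilde\chi_L^\eps$, which fails in general. Instead, the construction of Lemma \ref{lem.enlarge} gives $\chi_K^\eps\le\widetilde\chi_L^\eps$ for neighbours. Hence $|\D_{K,\sigma}(\chi^\eps)^2|\le(\chi_K^\eps+\chi_L^\eps)|\D_{K,\sigma}\chi^\eps|\le C\eps^{-1}\widetilde\chi_L^\eps\dist_\sigma$, and the remainder can be charged to cell $L$ directly.

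The genuine gap is in Step 3. The surplus you must control is
$\delta\sum_K\m(K)\big((\widetilde\chi_K^\eps)^2-(\chi_K^\eps)^2\big)(\rho_K^k)^2\lesssim\delta\eps^{-1}\sum_K\m(K)h_K\widetilde\chi_K^\eps(\rho_K^k)^2$. This is only first order in $h_K$, while your Poisson inverse estimate controls $\m(K)h_K^2(\rho_K^k)^2$. Combining the two as written leaves $\eps^{-1}h_K^{-1}\widetilde\chi_K^\eps\sum_{\sigma\in\E_K}\tau_\sigma|\D_{K,\sigma}V^k|^2$. This is not uniform in $\eta$ inside the cutoff layer, where $\widetilde\chi_K^\eps$ is of order one. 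Saying it is ``small on the transition layer'' does not help, because the coercive term only carries $(\chi_K^\eps)^2$, which may be small there.

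The missing step is one more Young inequality. From \eqref{5.enlarge2} one gets $(\widetilde\chi_K^\eps)^2\le 2(\chi_K^\eps)^2+C\eps^{-2}h_K^2$, hence
\begin{align*}
  (\widetilde\chi_K^\eps)^2-(\chi_K^\eps)^2
  \le (\chi_K^\eps)^2 + \min\{1,C\eps^{-2}h_K^2\}.
\end{align*}
The first part is absorbed for small $\delta$. Only the second part goes through the Poisson equation, which yields $\min\{1,C\eps^{-2}h_K^2\}\m(K)(\rho_K^k)^2\le 2\m(K)A_K^2+C\eps^{-2}\sum_{\sigma\in\E_K}\tau_\sigma|\D_{K,\sigma}V^k|^2$. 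The $\min$ is also what keeps $C'$ independent of $\eps$. Your displayed bound with $\m(K)A_K^2$, multiplied by an $\eps^{-2}$ prefactor, would give $C'\sim\eps^{-2}\|A\|_{L^2}^2$. With this repair your route works, at the cost of a second use of the Poisson equation.

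The paper avoids Step 3 altogether by inserting $(\widetilde\chi_K^\eps)^2$ into the Poisson term from the start, via $I_4=I_{41}+I_{42}$. The coercive term is then $\frac12\sum_K\m(K)(R_K^k)^2(\widetilde\chi_K^\eps)^2$. Both $I_{42}$ and $I_5$ carry the weight $\eps^{-1}h_K\widetilde\chi^\eps$ and are absorbed into it by a single Young inequality using $\tau_\sigma h_K^2/\m(K)\le C$. In that route, $A$ enters only through $I_{41}$.
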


\begin{proof}
We abbreviate $R^k:=N^k-P^k-Q^k$. By definition \eqref{5.Ctot} of the cross term and discrete integration by parts, we have
\begin{align*}
  & \mathcal C_{\rm tot}^{\eps,k} = -\sum_{\sigma=K|L\in\E}\tau_\sigma
  \D_{K,\sigma}R^k\D_{K,\sigma}V^k(\chi^\eps_K)^2
  = I_4 + I_5, \quad\mbox{where} \\
  & I_4 = \sum_{K\in\T}\sum_{\sigma\in\E_K}\tau_\sigma
  \D_{K,\sigma}V^k R_K^k(\chi^\eps_K)^2, \quad
  I_5 = \sum_{\sigma=K|L\in\E}\tau_\sigma\D_{K,\sigma}V^k
  R_K^k\D_{K,\sigma}(\chi^\eps)^2.
\end{align*}
We rewrite $I_4$ using the enlarged cutoff, leading to $I_4=I_{41}+I_{42}$, where
\begin{align*}
  I_{41} &= \sum_{K\in\T}\sum_{\sigma\in\E_K}\tau_\sigma
  \D_{K,\sigma}V^k R_K^k(\widetilde\chi^\eps_K)^2, \\
  I_{42} &= \sum_{K\in\T}\sum_{\sigma\in\E_K}\tau_\sigma
  \D_{K,\sigma}V^k R_K^k
  \big((\chi^\eps_K)^2 - (\widetilde\chi^\eps_K)^2\big).
\end{align*}
We infer from the discrete Poisson equation \eqref{3.V} and Young's inequality together with $A\in L^2(\Omega)$ that
\begin{align*}
  I_{41} = \sum_{K\in\T}\m(K)(R_K^k+A_K)R_K^k
  (\widetilde\chi^\eps_K)^2
  \ge \frac12\sum_{K\in\T}\m(K)|R_K^k|^2(\widetilde\chi^\eps_K)^2 - C.
\end{align*}
It follows from \eqref{5.enlarge2} and again Young's inequality that
\begin{align*}
  |I_{42}| &\le \frac{C}{\eps}\sum_{K\in\T}\sum_{\sigma\in\E_K}
  \tau_\sigma|\D_{K,\sigma}V^k||R_K^k|\widetilde\chi^\eps_K h_K \\
  &\le \frac14\sum_{K\in\T}\m(K)|R_K^k|^2(\widetilde\chi^\eps_K)^2
  + \frac{C}{\eps^2}\sum_{\sigma=K|L\in\E}
  \frac{\tau_\sigma^2 h_K^2}{\m(K)}|\D_{K,\sigma}V^k|^2.
\end{align*}
Lemma \ref{lem.meshregul} in Appendix \ref{sec.app} shows that $\tau_\sigma h_K^2/\m(K)$ is uniformly bounded such that
\begin{align*}
  I_4 = I_{41} + I_{42}
  \ge \frac14\sum_{K\in\T}\m(K)|R_K^k|^2(\widetilde\chi^\eps_K)^2 
  - \frac{C}{\eps^2}\sum_{\sigma=K|L\in\E}
  \tau_\sigma|\D_{K,\sigma}V^k|^2 - C. 
\end{align*}
Finally, we estimate $I_5$. We infer again from \eqref{5.enlarge2} that
\begin{align*}
  |I_5| &\le \frac{C}{\eps}\sum_{\sigma=K|L\in\E}\tau_\sigma
  |\D_{K,\sigma}V^k||R_K^k|\widetilde\chi^\eps_K h_K \\
  &\le \frac14\sum_{K\in\T}\m(K)|R_K^k|^2
  (\widetilde\chi^\eps_K)^2 + \frac{C}{\eps^2}
  \sum_{\sigma=K|L\in\E}\frac{\tau_\sigma^2 h_K^2}{\m(K)}
  |\D_{K,\sigma}V^k|^2. 
\end{align*}
Arguing similarly as before, and adding the bounds for $I_4$ and $I_5$ yields the result.  
\end{proof}


\subsection{Localized gradient estimates}

We combine the interior Fisher information estimate obtained in the previous subsection and the global bounds of Lemma \ref{lem.est1}. For a function $u=(u_K)\in\R^{\#\mathcal T}$, we define the localized product $u\chi^\eps := (u_K\chi_K^\eps)_{K\in\T}$. 

\begin{lemma}\label{lem.grad}
There exists a constant $C>0$, independent of $\eps$ and $\eta$, such that 
\begin{align}
  \sum_{k=1}^{N_T}\Delta t\sum_{\sigma=K|L\in\E}\m(\Delta_\sigma)
  \big(|\na_\sigma^{\Delta x}\sqrt{N^k}|^2 
  + |\na_\sigma^{\Delta x}\sqrt{P^k}|^2 
  + |\na_\sigma^{\Delta x}\sqrt{Q^k}|^2\big)(\chi^\eps_K)^2
  &\le \frac{C}{\eps^2}, \label{5.grad1} \\
  \sum_{k=1}^{N_T}\Delta t\sum_{\sigma=K|L\in\E}\m(\Delta_\sigma)
  \big(|\na_\sigma^{\Delta x}(N^k\chi^\eps)|^2 
  + |\na_\sigma^{\Delta x}(P^k\chi^\eps)|^2 
  + |\na_\sigma^{\Delta x}(Q^k\chi^\eps)|^2\big)
  &\le \frac{C}{\eps}. \label{5.grad2} 
\end{align}
\end{lemma}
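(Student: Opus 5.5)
The plan for \eqref{5.grad1} is to combine Lemma \ref{lem.first} with Lemma \ref{lem.second}, and then sum in time using Lemma \ref{lem.est1}.

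\textbf{Step 1 (interior Fisher information).} By Lemma \ref{lem.first}, for every $k$,
\begin{align*}
  \mathcal I_{\rm tot}^{\eps,k} \lesssim \mathcal D[N^k,-V^k]+\mathcal D[P^k,V^k]+\mathcal D[Q^k,V^k] - \mathcal C_{\rm tot}^{\eps,k}.
\end{align*}
By Lemma \ref{lem.second}, $-\mathcal C_{\rm tot}^{\eps,k}\le C\eps^{-2}\sum_{\sigma}\tau_\sigma|\D_{K,\sigma}V^k|^2+C'$. The electric energy is bounded uniformly in $k$ by Lemma \ref{lem.est1}(ii), because $\sum_\sigma\tau_\sigma|\D_{K,\sigma}V^k|^2=\sum_\sigma \m(\Delta_\sigma)|\na^{\Delta x}_\sigma V^k|^2/d$ by \eqref{2.mdelta} and \eqref{2.grad}. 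I then multiply by $\Delta t$ and sum over $k$. The dissipation terms are bounded by Lemma \ref{lem.est1}(iii), and the remaining terms are bounded by $T(C\eps^{-2}+C')\le C\eps^{-2}$ for $\eps\le1$. Finally, $\m(\Delta_\sigma)|\na_\sigma^{\Delta x}\sqrt{u}|^2 = d\,\tau_\sigma|\D_{K,\sigma}\sqrt u|^2$, again by \eqref{2.mdelta} and \eqref{2.grad}. This identity turns the bound on $\sum_k\Delta t\,\mathcal I_{\rm tot}^{\eps,k}$ into \eqref{5.grad1}.

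\textbf{Step 2 (product rule for \eqref{5.grad2}).} For the localized products I use the discrete Leibniz rule
\begin{align*}
  \D_{K,\sigma}(u\chi^\eps) = \chi^\eps_K\D_{K,\sigma}u + u_{K,\sigma}\D_{K,\sigma}\chi^\eps, \qquad
  \D_{K,\sigma}u = 2\overline u_\sigma^{1/2}\D_{K,\sigma}\sqrt u .
\end{align*}
Squaring and multiplying by $\tau_\sigma$ gives two contributions:
\begin{align*}
  4\tau_\sigma\overline u_\sigma(\chi_K^\eps)^2|\D_{K,\sigma}\sqrt u|^2
  \quad\mbox{and}\quad
  \tau_\sigma u_{K,\sigma}^2|\D_{K,\sigma}\chi^\eps|^2 .
\end{align*}
For the second contribution I use \eqref{5.chieps} and $\tau_\sigma\dist_\sigma^2=d\,\m(\Delta_\sigma)\le C\m(K)$. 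I also use that $\D_{K,\sigma}\chi^\eps\neq 0$ only on the transition layer $\Omega_D^\eps\setminus\Omega_D^{2\eps}$, whose measure is $O(\eps)$. I would exploit this layer structure, rather than the crude $\eps^{-2}$ estimate of Step 1, to reach the order $\eps^{-1}$. For the first contribution I plan to replace $(\chi_K^\eps)^2$ by the enlarged cutoff $(\widetilde\chi^\eps_K)^2$ from \eqref{5.enlarge1}--\eqref{5.enlarge2}, and then weight the Step 1 argument by $\overline u_\sigma$.

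\textbf{Main obstacle.} Both contributions carry an extra density factor: $\overline u_\sigma$ in the first and $u_{K,\sigma}^2$ in the second. They therefore require local $L^\infty$- or $L^2$-type control of $N,P,Q$ on the support of $\chi^\eps$. The a priori estimates give only $L\log L$ bounds for the densities (Lemma \ref{lem.est1}). From the proof of Lemma \ref{lem.second}, they give additionally a local $L^2$ bound on the net charge $R^k\widetilde\chi^\eps$, but not on the individual species. My first attempt would be a discrete Gagliardo--Nirenberg (or Sobolev) inequality applied to $\sqrt{u}\,\chi^\eps$. Its $H^1$ seminorm is controlled by \eqref{5.grad1} together with the term $\sqrt{u_{K,\sigma}}\,\D_{K,\sigma}\chi^\eps$, whose square summed over edges is bounded by $C\eps^{-2}$ times the mass. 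This would yield the needed integrability of $u\chi^\eps$, at least in low dimensions. The difficulty I anticipate is that this route seems to reproduce the factor $\eps^{-2}$ rather than the claimed $\eps^{-1}$, and it may not close in general dimension $d$. If it fails, the literal squared bound on $\na_\sigma^{\Delta x}(u\chi^\eps)$ would have to be obtained from the square-root quantities $\sqrt{u}\,\chi^\eps$, for which the two contributions above reduce to \eqref{5.grad1} plus a mass term. I regard establishing this step as the crux of the lemma.
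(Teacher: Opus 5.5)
Your Step~1 is the paper's proof of \eqref{5.grad1}, and it is correct. The paper uses the same ingredients: Lemma~\ref{lem.first} with Lemma~\ref{lem.second}, the identity $\m(\Delta_\sigma)|\na_\sigma^{\Delta x}v|^2=d\,\tau_\sigma|\D_{K,\sigma}v|^2$, and summation in time with Lemma~\ref{lem.est1}.

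For \eqref{5.grad2}, however, you do not have a proof. Step~2 stops at the obstacle, and your fallback of passing to $\sqrt{u}\,\chi^\eps$ bounds a different quantity. Your diagnosis is right. Bounding $\sum_\sigma\tau_\sigma|\D_{K,\sigma}(u\chi^\eps)|^2$ needs local $L^2$/$L^\infty$ control of each density, and the $L\log L$ bound plus a local Fisher-information bound does not give that. Already in the continuum, $\sqrt u\in H^1$ does not imply $\na u\in L^2$ when $d\ge 2$.

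The missing idea is that the squared bound is neither what the paper proves nor what it needs. The paper's argument establishes the unsquared, discrete $W^{1,1}$ estimate
\[
  \sum_{k=1}^{N_T}\Delta t\sum_{\sigma\in\E}\m(\Delta_\sigma)\big|\na_\sigma^{\Delta x}(u^k\chi^\eps)\big|
  =\sum_{k=1}^{N_T}\Delta t\sum_{\sigma\in\E}\m(\sigma)\big|\D_{K,\sigma}(u^k\chi^\eps)\big|\le\frac{C}{\eps}.
\]
This is exactly what enters the Aubin--Lions step of Lemma~\ref{lem.convgrad} through $\|N^k\chi^\eps\|_{1,1,\T}$. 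The squares in \eqref{5.grad2} should be read as a misprint.

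At the $L^1$ level the extra density factor is harmless. Use your Leibniz split and $\D_{K,\sigma}u=2\overline u_\sigma^{1/2}\D_{K,\sigma}\sqrt u$, together with $\m(\sigma)^2/\tau_\sigma=\m(\sigma)\dist_\sigma$. Cauchy--Schwarz in space--time then gives
\[
  \sum_{k}\Delta t\sum_{\sigma}\m(\sigma)\chi_K^\eps|\D_{K,\sigma}u^k|
  \le 2\Big(\sum_{k}\Delta t\sum_{\sigma}\tau_\sigma(\chi_K^\eps)^2|\D_{K,\sigma}\sqrt{u^k}|^2\Big)^{1/2}
  \Big(\sum_{k}\Delta t\sum_{\sigma}\m(\sigma)\dist_\sigma\,\overline u_\sigma^k\Big)^{1/2}.
\]
The first factor is $O(\eps^{-1})$ by \eqref{5.grad1}. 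The second factor is bounded by the mass, using $\overline u_\sigma\le\frac12(u_K+u_{K,\sigma})$, $\m(\sigma)\dist_\sigma=d\,\m(\Delta_\sigma)$ and $\m(\Delta_\sigma)\le\xi_1^{-1}\m(K)$.

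The cutoff term can be handled crudely with \eqref{5.chieps}: $\m(\sigma)u_{K,\sigma}|\D_{K,\sigma}\chi^\eps|\le C\eps^{-1}\m(\sigma)\dist_\sigma u_{K,\sigma}$, which again sums to at most $C\eps^{-1}$ times the mass. No thin-layer argument is needed.

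This also explains the rate you found puzzling. The $\eps^{-1}$ is just the square root of the $\eps^{-2}$ in \eqref{5.grad1}, produced by Cauchy--Schwarz. It is not a gain from the measure of the transition region.
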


\begin{proof}
We first prove \eqref{5.grad1}. A combination of Lemmas \ref{lem.first} and \ref{lem.second} leads to
\begin{align*}
  \mathcal I_{\rm tot}^{\eps,k} 
  \le C\big(\mathcal D[N^k,-V^k] + \mathcal D[P^k,V^k]
  + \mathcal D[Q^k,V^k]\big) + \frac{C}{\eps^2}\sum_{\sigma=K|L\in\E}
  \tau_\sigma|\D_{K,\sigma}V^k|^2 + C.
\end{align*}
We sum over $k=1,\ldots,N_T$ and use Lemma \ref{lem.est1} to obtain
$
  \sum_{k=1}^{N_T}\Delta t\mathcal I_{\rm tot}^{\eps,k} 
  \le C\eps^{-2},
$
which proves \eqref{5.grad1}. Next, the discrete product rule $\D_{K,\sigma}(N^k\chi^\eps) = \chi_K^\eps\D_{K,\sigma}N^k + N_{K,\sigma}\D_{K,\sigma}\chi^\eps$ implies that 
\begin{align*}
  & \sum_{k=1}^{N_T}\Delta t\sum_{\sigma=K|L\in\E}\m(\sigma)
  |\D_{K,\sigma}(N^k\chi^\eps)|\le I_6+I_7, \quad \mbox{where} \\
  & I_6 = \sum_{k=1}^{N_T}\Delta t\sum_{\sigma=K|L\in\E}\m(\sigma)
  \chi_K^\eps|\D_{K,\sigma}N^k|, \quad
  I_7 = \sum_{k=1}^{N_T}\Delta t\sum_{\sigma=K|L\in\E}\m(\sigma)
  N_{K,\sigma}^k|\D_{K,\sigma}\chi^\eps|.
\end{align*}
We first estimate $I_6$. We infer from
\begin{align*}
  \D_{K,\sigma}N^k = \big((N_K^k)^{1/2}+(N_{K,\sigma}^k)^{1/2}\big)
  \D_{K,\sigma}\sqrt{N^k} = 2(\overline{N}_\sigma)^{1/2}
  \D_{K,\sigma}\sqrt{N^k}
\end{align*}
and the Cauchy--Schwarz inequality that
\begin{align*}
  I_6 \le 2\bigg(\sum_{k=1}^{N_T}\Delta t\sum_{\sigma=K|L\in\E}
  \tau_\sigma|\D_{K,\sigma}\sqrt{N^k}|^2(\chi^\eps_K)^2\bigg)^{1/2}
  \bigg(\sum_{k=1}^{N_T}\Delta t\sum_{\sigma=K|L\in\E}\m(\sigma)
  \dist_\sigma(\overline{N}_\sigma^k)^2\bigg)^{1/2}.
\end{align*}
The first factor is bounded by $C/\eps$ thanks to \eqref{5.grad1}. For the second factor, we use $(\overline{N}_\sigma)^2 \le 2(N_K^k + N_{K,\sigma}^k)$, identity \eqref{2.mdelta}, and $\m(\Delta_\sigma)\le C\m(K)$ from Lemma \ref{lem.meshregul} in Appendix \ref{sec.app}:
\begin{align*}
  \sum_{\sigma\in\E}\m(\sigma)\dist_\sigma(\overline{N}_\sigma^k)^2
  = d\sum_{\sigma\in\E}\m(\Delta_\sigma)(\overline{N}_\sigma^k)^2
  \le C\sum_{K\in\T}\m(K)N_K^k.
\end{align*}
We conclude that $I_6\le C/\eps$. Next, we deduce from the cutoff bound \eqref{5.chieps}, the mesh regularity, and mass conservation that
\begin{align*}
  I_7 \le \frac{C}{\eps}\sum_{k=1}^{N_T}\Delta t\sum_{\sigma=K|L\in\E}
  \m(\sigma)\dist_\sigma N_{K,\sigma}^k \le \frac{C}{\eps}.
\end{align*}
Combining the estimates for $I_6$ and $I_7$ and proceeding analogously for $P$ and $Q$, we have shown \eqref{5.grad2}.
\end{proof}


\subsection{Localized discrete time derivative estimate}

We prove that the discrete time derivative $\pa_t^{\Delta t} u^k := (u^k-u^{k-1})/\Delta t$ is uniformly bounded.

\begin{lemma}
There exists $C>0$, independent of $\eps$ and $\eta$, such that
\begin{align}\label{5.time}
  \sum_{k=1}^{N_T}\Delta t\big(
  \|\pa_t^{\Delta t}(N^k\chi^\eps)\|_{-1,1,\T}^2
  + \|\pa_t^{\Delta t}(P^k\chi^\eps)\|_{-1,1,\T}^2
  + \|\pa_t^{\Delta t}(Q^k\chi^\eps)\|_{-1,1,\T}^2\big)
  \le \frac{C}{\eps^2}. 
\end{align}
\end{lemma}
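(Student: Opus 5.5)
The plan is to bound the dual norm by duality. The dual norm $\|\cdot\|_{-1,1,\T}$ is taken (as is standard) as the supremum of $\sum_K \m(K) w_K v_K$ over discrete test functions $v$ with $\|v\|_{1,\infty,\T}\le 1$, i.e.\ $|v_K|\le 1$ and $|\D_{K,\sigma}v|/\dist_\sigma\le 1$, with $v$ vanishing on Dirichlet edges. Fix such a $v$ and write
\begin{align*}
  \sum_{K}\m(K)\pa_t^{\Delta t}(N^k\chi^\eps)_K v_K
  = \sum_K \m(K)\pa_t^{\Delta t}N_K^k\,(\chi^\eps_Kv_K)
  = \sum_K\sum_{\sigma\in\E_K}\F_{K,\sigma}[N^k,-V^k]\chi^\eps_Kv_K .
\end{align*}
Then we apply \eqref{2.dibp}. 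Boundary contributions vanish: on Neumann edges the flux is zero, and $\chi^\eps$ vanishes near $\Gamma_D$, so no Dirichlet edge meets the support of $\chi^\eps$ once $\eta<\eps$. This yields $-\sum_\sigma \F_{K,\sigma}\D_{K,\sigma}(\chi^\eps v)$. The discrete product rule gives $|\D_{K,\sigma}(\chi^\eps v)|\le \dist_\sigma(1+C\eps^{-1})$ by \eqref{5.chieps}, and moreover the difference vanishes unless $\chi^\eps_K$ or $\chi^\eps_{K,\sigma}$ is nonzero.

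The core step is bounding $\sum_\sigma \m(\sigma)|\F_{K,\sigma}|$ restricted to edges near the support of $\chi^\eps$. We use the form \eqref{2.flux2} together with Cauchy--Schwarz:
\[
|\F_{K,\sigma}|=\big(\tau_\sigma\widetilde u_\sigma\,\mathcal P_{K,\sigma}\big)^{1/2},
\]
so that
\[
\sum_\sigma\dist_\sigma|\F_{K,\sigma}|\le\Big(\sum_\sigma\mathcal P_{K,\sigma}\Big)^{1/2}\Big(\sum_\sigma \m(\sigma)\dist_\sigma\widetilde u_\sigma\Big)^{1/2}\le C\,\mathcal D[N^k,-V^k]^{1/2},
\]
using $\widetilde u_\sigma\le u_K+u_L$, \eqref{2.mdelta}, Lemma \ref{lem.meshregul} and the mass bound of Lemma \ref{lem.est1}(i). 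Hence
\[
\|\pa_t^{\Delta t}(N^k\chi^\eps)\|_{-1,1,\T}\le C\eps^{-1}\mathcal D[N^k,-V^k]^{1/2}.
\]
Squaring, multiplying by $\Delta t$, summing over $k$, and invoking Lemma \ref{lem.est1}(iii) gives the bound $C\eps^{-2}$. The same computation applies to $P$ and $Q$. For $Q$ the no-flux condition holds on the whole boundary, so the cutoff is not even needed for the boundary terms.

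The main subtlety is the boundary and the cutoff. One option is to require $\chi^\eps$ to vanish on all cells touching $\Gamma_D$ (true for $\eta$ small relative to $\eps$). Alternatively, one can note that $\chi^\eps_\sigma=0$ on Dirichlet edges, which kills the boundary term in \eqref{2.dibp}. This localization is exactly why the estimate is stated for $N\chi^\eps$, $P\chi^\eps$ rather than for $N$, $P$. If the dual norm is instead defined with $W^{1,\infty}$ test functions, the same argument works after replacing $v_K$ by $v(x_K)$.
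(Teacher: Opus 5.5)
Your proposal is correct and follows the paper's proof essentially step by step. Both arguments test the scheme with $\chi^\eps\zeta$, integrate by parts discretely, bound $|\D_{K,\sigma}(\chi^\eps\zeta)|\le C\eps^{-1}\dist_\sigma$ via the product rule, and then apply Cauchy--Schwarz using $|\F_{K,\sigma}|^2=\tau_\sigma\widetilde u_\sigma\mathcal P_{K,\sigma}$ together with the mass bound. Your handling of the Dirichlet boundary edges is more careful than the paper's, and squaring correctly gives $C\eps^{-2}$; the paper's final display writes $C/\eps$, which is a slip.
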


\begin{proof}
Let $\zeta=(\zeta_K)\in\R^{\#\mathcal T}$ with $\|\zeta\|_{1,\infty,\T}\le 1$. We multiply equation \eqref{3.NPQ} for $N$ by $\chi_K^\eps\zeta_K$, sum over $K\in\T$, and apply discrete integration by parts:
\begin{align*}
  \sum_{K\in\T}\m(K)\pa_t^{\Delta t}(N_K^k\chi_K^\eps)\zeta_K
  = -\sum_{\sigma=K|L\in\E}\F_{K,\sigma}[N^k,-V^k]
  \D_{K,\sigma}(\chi^\eps\zeta).
\end{align*}
We use the discrete product rule, the bound $\|\zeta\|_{1,\infty,\T}\le 1$, and the cutoff estimate \eqref{5.chieps}:
\begin{align*}
  |\D_{K,\sigma}(\chi^\eps\zeta)|
  \le \chi_K^\eps|\D_{K,\sigma}\zeta| 
  + |\zeta_{K,\sigma}||\D_{K,\sigma}\chi^\eps|
  \le C(1+\eps^{-1})\dist_\sigma.
\end{align*}
This shows that
\begin{align*}
  \bigg|\sum_{K\in\T}\m(K)\pa_t^{\Delta t}
  (N_K^k\chi_K^\eps)\zeta_K\bigg|
  \le \frac{C}{\eps}\sum_{\sigma=K|L\in\E}\dist_\sigma
  |\F_{K,\sigma}[N^k,-V^k]|.
\end{align*}
We take the supremum over all admissible $\zeta$ and apply the Cauchy--Schwarz inequality:
\begin{align} 
  \|\pa_t^{\Delta t}(N^k\chi^\eps)\|_{-1,1,\T}
  &\le \frac{C}{\eps}\sum_{\sigma=K|L\in\E}\dist_\sigma
  |\F_{K,\sigma}[N^k,-V^k]| \nonumber \\
  &\le \frac{C}{\eps}\bigg(\sum_{\sigma=K|L\in\E}
  \frac{1}{\tau_\sigma\widetilde{N}_\sigma^k}
  |\F_{K,\sigma}[N^k,-V^k]|^2\bigg)^{1/2}
  \bigg(\sum_{\sigma\in\E}\tau_\sigma\widetilde{N}_\sigma^k
  \dist_\sigma^2\bigg)^{1/2} \label{5.aux} \\
  &\le \frac{C}{\eps}\mathcal D_{K,\sigma}[N^k,-V^k]^{1/2}
  \bigg(\sum_{\sigma\in\E}\tau_\sigma\widetilde{N}_\sigma^k
  \dist_\sigma^2\bigg)^{1/2}, \nonumber
\end{align}
where the last step follows from formula \eqref{2.P} and the definition of the dissipation term. Because of $\tau_\sigma = \m(\sigma)/\dist_\sigma$, we have $\tau_\sigma\dist_\sigma^2 = \m(\sigma)\dist_\sigma$, and the identity $d\m(\Delta_\sigma) = \m(\sigma)\dist_\sigma$ as well as the mesh regularity $\m(\Delta_\sigma)\le\xi_1^{-1}\m(K)$ from Lemma \ref{lem.meshregul} in Appendix \ref{sec.app} imply that
\begin{align*}
  \sum_{\sigma\in\E}\tau_\sigma\widetilde{N}_\sigma^k\dist_\sigma^2
  \le \sum_{\sigma\in\E}\m(\sigma)\dist_\sigma\widetilde{N}_\sigma^k
  = d\sum_{\sigma\in\E}\m(\Delta_\sigma)\widetilde{N}_\sigma^k
  \le C(\xi_1)\sum_{K\in\T}\m(K)N_K^k \le C,
\end{align*}
where the last step follows from mass conservation (see Lemma \ref{lem.est1}). We conclude from \eqref{5.aux} and Lemma \ref{lem.est1} after summation over $k=1,\ldots,N_T$ that 
\begin{align*}
  \sum_{k=1}^{N_T}\Delta t
  \|\pa_t^{\Delta t}(N^k\chi^\eps)\|_{-1,1,\T}^2
  \le \frac{C}{\eps}\sum_{k=1}^{N_T}\Delta t
  \mathcal D_{K,\sigma}[N^k,-V^k] \le \frac{C}{\eps}.
\end{align*}
The same arguments apply to $P^k$ and $Q^k$. This proves the lemma.
\end{proof}


\section{Compactness and convergence}\label{sec.comp}

Let $\mathcal M_m=(\T_m,\E_m,\mathcal P_m;\Delta t_m,\Delta x_m)$ be a sequence of admissible meshes with mesh size $\eta_m\to 0$ as $m\to\infty$, and let $(N^k,P^k,Q^k,V^k)_{k=0,\ldots,N_T^m}$ be the corresponding finite-volume solution to \eqref{3.NPQ}--\eqref{3.bc}. To simplify the notation, we set $\pa_t^m:=\pa_t^{\Delta t_m}$, $\na^m:=\na^{\Delta x_m}$, $\pi_m:=\pi_{\eta_m}$, and $\pi_m^*:=\pi_{\eta_m}^*$. We denote by $(N_m,P_m,Q_m,V_m):=(\pi_m N,\pi_m P,\pi_m Q,\pi_m V)$ the piecewise constant reconstructions and correspondingly $\na_m V:=\pi_m^*(\na^m V)$, $\na_m\sqrt{N}:=\pi_m^*(\na^m\sqrt{N})$ with analogous notations for $P$ and $Q$. 

\subsection{Compactness of the approximate solutions}

The first lemma shows the compactness of the approximate densities.

\begin{lemma}\label{lem.convgrad}
There exist nonnegative functions $N^*$, $P^*$, $Q^*\in L^1(\Omega_T)$ such that, up to a subsequence, as $m\to\infty$, 
\begin{align*}
  N_m\to N^*,\quad P_m\to P^*, \quad Q_m\to Q^*\quad
  \mbox{strongly in }L^1(\Omega_T). 
\end{align*} 
Furthermore, for every open subset $\omega\Subset\Omega\cup\Gamma_N$,
\begin{align*}
  \na_m\sqrt{N}\rightharpoonup\na\sqrt{N^*}, \quad
  \na_m\sqrt{P}\rightharpoonup\na\sqrt{P^*}, \quad
  \na_m\sqrt{Q}\rightharpoonup\na\sqrt{Q^*}
\end{align*}
weakly in $L^2((0,T)\times\omega;\R^d)$.
\end{lemma}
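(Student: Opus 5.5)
The plan is to first obtain local strong compactness on subdomains away from $\Gamma_D$ via a discrete Aubin--Lions argument, then upgrade it to global $L^1$ convergence using equi-integrability and the smallness of the boundary layer near $\Gamma_D$. The weak gradient limits are identified at the end through the standard consistency of the diamond gradient.

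\textbf{Step 1: local compactness of $N^k\chi^\eps$.} Fix $\eps>0$. By \eqref{5.grad2}, the reconstructions $\pi_m(N\chi^\eps)$ are bounded in a discrete $L^2(0,T;W^{1,1})$-type seminorm; the same discrete $L^1$-gradient bound also follows from Cauchy--Schwarz together with mass conservation. By \eqref{5.time}, the discrete time derivatives are bounded in $L^2(0,T;(W^{1,\infty})')$ in the discrete dual norm $\|\cdot\|_{-1,1,\T}$. These are exactly the hypotheses of the discrete Aubin--Lions lemma of Gallou\"et--Latch\'e (or of Andreianov--Canc\`es--Moussa, with $W^{1,1}$ compactly embedded in $L^1$). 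Applying it gives that, for each fixed $\eps$, $(N_m\chi^\eps)$ is relatively compact in $L^1(\Omega_T)$, and likewise for $P_m$ and $Q_m$.

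\textbf{Step 2: from local to global convergence.} Let $\eps=1/j$ and extract subsequences diagonally, so that $N_m\chi^{1/j}$ converges in $L^1$ for every $j$. To pass to $N_m$ itself, we need
\begin{align*}
  \|N_m(1-\chi^\eps)\|_{L^1(\Omega_T)}\le \int_0^T\int_{\{d(x,\Gamma_D)<2\eps\}}N_m\,dx\,dt\to 0
\end{align*}
as $\eps\to0$, uniformly in $m$. This follows from the uniform entropy bound in Lemma \ref{lem.est1}(ii): $\sup_k\sum_K\m(K)N_K^k\log N_K^k\le C$ gives equi-integrability by de la Vall\'ee-Poussin, and the boundary layer has measure $O(\eps)$. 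Hence $(N_m)$ is Cauchy in $L^1(\Omega_T)$ along the diagonal subsequence, and its limit $N^*$ is nonnegative. Alternatively, one can argue that the family is totally bounded in $L^1$ by combining local compactness with the uniform tail estimate.

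\textbf{Step 3: identifying the gradient limits.} For $\omega\Subset\Omega\cup\Gamma_N$, choose $\eps$ with $\chi^\eps\equiv1$ on a neighbourhood of $\omega$. For $m$ large, every diamond $\Delta_\sigma$ meeting $\omega$ then has $\chi^\eps_K=1$. Therefore \eqref{5.grad1} bounds $\na_m\sqrt{N}$ in $L^2((0,T)\times\omega)$, and we extract a weak limit $G$. Since $N_m\to N^*$ in $L^1$, the function $\sqrt{N_m}$ converges to $\sqrt{N^*}$ in $L^2$, because $|\sqrt a-\sqrt b|^2\le|a-b|$. We identify $G=\na\sqrt{N^*}$ by testing against $\Phi\in C_0^\infty((0,T)\times\omega;\R^d)$, which are allowed to be nonzero near $\Gamma_N$. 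Using the classical weak consistency of the diamond gradient (e.g.\ Chainais-Hillairet--Droniou), we get
\begin{align*}
  \int\na_m\sqrt N\cdot\Phi=-\int\sqrt{N_m}\,\diver\Phi+R_m,
\end{align*}
where the remainder satisfies $|R_m|\le C\Delta x_m\|\na_m\sqrt N\|_{L^1}\|\Phi\|_{W^{1,\infty}}\to0$ by \eqref{4.L1grad}. The boundary edges on $\Gamma_N$ contribute terms with $\Phi\cdot\nu$, and these need care.

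\textbf{Main obstacle.} The hardest part is Step 3 near $\Gamma_N$. For $Q$ the edge values $Q_\sigma$ on $\E^N_{\rm ext}$ are defined implicitly by the zero-flux condition and do not coincide with $Q_K$. One must therefore show that these boundary gradient terms are controlled, for instance that they are bounded by the same dissipation estimates and vanish in the weak consistency identity. A secondary difficulty is verifying that the dual norm in \eqref{5.time} matches the hypotheses of the discrete Aubin--Lions lemma, in particular that the test functions are compatible with the cutoff.
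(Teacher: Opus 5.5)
Your proposal is correct and follows essentially the paper's route. That route is: discrete Aubin--Lions compactness for $N\chi^\eps$ from \eqref{5.grad2} and \eqref{5.time}, a diagonal argument in $\eps$, and globalization through de la Vall\'ee-Poussin equi-integrability. Your direct Cauchy/tail argument is just Vitali's theorem, which the paper invokes. The gradient limit is then identified from the local bound \eqref{5.grad1} and the consistency argument of \cite[Lemma 4.4]{CLP03}, which you spell out.

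The ``main obstacle'' you flag near $\Gamma_N$ is not an actual obstacle. Since $\Gamma_N$ is Lebesgue-null, you can identify the weak limit in $L^2((0,T)\times\omega)$ with test functions $\Phi$ compactly supported in $(0,T)\times(\omega\cap\Omega)$, so no boundary edges enter the consistency identity. Your claim about $Q$ is also mistaken: on Neumann edges \eqref{3.bc} sets $V_{K,\sigma}=V_K$, so $\D_{K,\sigma}V=0$ and the zero-flux condition forces $Q_\sigma=Q_K$. The edge values of $Q$ that differ from $Q_K$ occur only on Dirichlet edges, where the cutoff vanishes.
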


\begin{proof}
We infer from \eqref{5.grad2} and \eqref{5.time} that
\begin{align*}
  \sum_{k=1}^{N_T^m}\Delta t_m\big(\|N^k\chi^\eps\|_{1,1,\T_m}
  + \|\pa_t^m(N^k\chi^\eps)\|_{-1,1,\T_m}\big) \le C(\eps). 
\end{align*}
Hence, by the discrete Aubin--Lions lemma \cite[Theorem 3.4]{GaLa12} and the arguments in \cite[Sec.~5.2]{JuZu23}, the sequence $(\pi_m(N\chi^\eps))_{m\in\N}$ is relatively compact in $L^1(\Omega_T)$. Thus, there exists a subsequence which is not relabeled such that, as $m\to\infty$,
\begin{align*}
  \pi_m(N\chi^\eps) \to N^\eps \quad\mbox{strongly in }L^1(\Omega_T). 
\end{align*}
It follows from $\chi^\eps\equiv 1$ in $\Omega_D^{2,\eps}$ that
\begin{align*}
  N_m = \pi_m N\to N^\eps \quad\mbox{strongly in }
  L^1((0,T)\times\Omega_D^{2\eps}).
\end{align*}
We apply as in \cite[Sec.~2.4]{JJZ23} a Cantor diagonal argument to obtain the existence of a function $N^*\in L^1(\Omega_T)$ such that
\begin{align*}
  N_m\to N^*\quad\mbox{strongly in }L^1_{\rm loc}(\Omega_T).
\end{align*}
 We claim that $(N_m)$ converges globally in $L^1(\Omega_T)$. Indeed, we know from Lemma \ref{lem.est1} that $(N_m)$ is uniformly bounded in $L\log L(\Omega_T)$ and, by the De la Vall\'ee--Poussin theorem, uniformly integrable in $\Omega_T$. We infer from Vitali's theorem that 
\begin{align*}
  N_m\to N\quad\mbox{strongly in }L^1(\Omega_T). 
\end{align*}

Next, let $\omega\Subset\Omega\cup\Gamma_N$. Choose $\eps>0$ such that $\omega\subset\Omega_D^{2\eps}$. The weak convergence $\pi_m^*\na^m \sqrt{N}\rightharpoonup\na \sqrt{N^*}$ in $L^2(0,T;L^2(\omega))$ (possibly for a subsequence) is a consequence of the uniform bound \eqref{5.grad1} and the arguments of \cite[Lemma 4.4]{CLP03}. 
\end{proof}

We state the compactness of the electric potential.

\begin{lemma}\label{lem.V}
There exists a function $V^*\in L^\infty(0,T;H^1(\Omega))$ such that, up to a subsequence,
\begin{align*}
  V_m\rightharpoonup V^*\quad\mbox{weakly* in }
  L^\infty(0,T;H^1(\Omega))\quad\mbox{as }m\to\infty.
\end{align*}
In particular, $\na_mV\rightharpoonup \na V^*$ weakly* in $L^\infty(0,T;L^2(\Omega;\R^d))$. 
\end{lemma}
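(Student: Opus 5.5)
The plan is to obtain a uniform discrete $H^1$ bound for $V^k$ that does not depend on $k$ or on the mesh, and then pass to the limit with a discrete compactness argument of the type in \cite{EGH00,CLP03}.

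\textbf{Step 1: uniform discrete $H^1$ bound.} Lemma \ref{lem.est1}(ii) gives
\begin{align*}
  \max_{k}\sum_{\sigma\in\E}\m(\Delta_\sigma)|\na_\sigma^{\Delta x}V^k|^2\le C,
\end{align*}
that is, $\max_k|V^k|_{1,2,\T}\le C$. To control the $L^2$ part I will write $V^k-V^{D}$. Its discrete Dirichlet trace vanishes on $\E_{\rm ext}^D$, and $\m(\Gamma_D)>0$ by (H1). The discrete Poincaré inequality with partial Dirichlet boundary \cite{EGH00} then gives $\|V^k-V^D\|_{0,2,\T}\le C|V^k-V^D|_{1,2,\T}$. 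Because $V^D$ is smooth enough by (H3), the discrete seminorm of its cellwise average is bounded uniformly. Together these give
\begin{align*}
  \sup_m\|V_m\|_{L^\infty(0,T;L^2(\Omega))}+\sup_m\|\na_mV\|_{L^\infty(0,T;L^2(\Omega))}\le C,
\end{align*}
using $\|\na_mV(t)\|_{L^2}^2=\sum_\sigma\m(\Delta_\sigma)|\na^m_\sigma V^k|^2$.

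\textbf{Step 2: weak* limits.} By Banach--Alaoglu, since $L^\infty(0,T;L^2)$ is the dual of $L^1(0,T;L^2)$, there are a subsequence and limits $V^*\in L^\infty(0,T;L^2(\Omega))$ and $G\in L^\infty(0,T;L^2(\Omega;\R^d))$ with $V_m\rightharpoonup^* V^*$ and $\na_mV\rightharpoonup^* G$.

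\textbf{Step 3: identify $G=\na V^*$.} This is the main step. I will follow the weak-gradient argument of \cite[Lemma 4.4]{CLP03} (see also \cite{EGH00}). Take $\Phi\in C_0^\infty(\Omega_T;\R^d)$ and compute $\int_{\Omega_T}\na_mV\cdot\Phi$. On each diamond $\Delta_\sigma$, use $d\m(\Delta_\sigma)=\m(\sigma)\dist_\sigma$ from \eqref{2.mdelta} to get
\begin{align*}
  \int_{\Delta_\sigma}\na^m_\sigma V\cdot\Phi\,dx=\m(\sigma)\D_{K,\sigma}V\,\nu_{K,\sigma}\cdot\Phi_{\Delta_\sigma},
\end{align*}
where $\Phi_{\Delta_\sigma}$ is the mean of $\Phi$ over $\Delta_\sigma$. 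Comparing with $-\int V_m\diver\Phi=-\sum_K V_K\sum_{\sigma\in\E_K}\int_\sigma\Phi\cdot\nu_{K,\sigma}$ and reordering by edges, the difference is
\begin{align*}
  \sum_\sigma\m(\sigma)\D_{K,\sigma}V\,\big(\Phi_{\Delta_\sigma}-\Phi_\sigma\big)\cdot\nu_{K,\sigma},
\end{align*}
with $\Phi_\sigma$ the mean of $\Phi$ over $\sigma$. Since $|\Phi_{\Delta_\sigma}-\Phi_\sigma|\le C\Delta x$, Cauchy--Schwarz and \eqref{2.mdelta} bound this by
\begin{align*}
  C\Delta x\,\|\na\Phi\|_\infty\Big(\sum_\sigma\tau_\sigma|\D_{K,\sigma}V|^2\Big)^{1/2}\Big(\sum_\sigma\m(\sigma)\dist_\sigma\Big)^{1/2}\le C\Delta x\to0.
\end{align*}
Passing to the limit yields $\int G\cdot\Phi=-\int V^*\diver\Phi$. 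Hence $G=\na V^*$, $V^*\in L^\infty(0,T;H^1(\Omega))$, and $V_m\rightharpoonup^* V^*$ weakly* in $L^\infty(0,T;H^1(\Omega))$. Boundary edges need no treatment because $\Phi$ has compact support in $\Omega$.

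The real obstacle is Step 3. The diamond-versus-edge averaging must be handled so that no factor $\dist_\sigma^{-1}$ is lost; the mesh regularity \eqref{2.meshreg} keeps the constants uniform. Recovering $V^*-V^D\in H^1_D$ would need a trace argument, but that is not required for this statement.
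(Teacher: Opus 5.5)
Your proposal is correct and follows the paper's route: the uniform bound from Lemma~\ref{lem.est1}, weak* compactness in $L^\infty(0,T;L^2)$, and identification of the limit of $\na_m V$ via the argument of \cite[Lemma 4.4]{CLP03}, which you carry out explicitly through the diamond-versus-edge averaging. You also supply the $L^2$ control through a discrete Poincar\'e inequality using $\m(\Gamma_D)>0$, which the paper leaves implicit. Note only that (H3) by itself controls $\log N^D-V^D$ and $\log P^D+V^D$ rather than $V^D$, so the regularity of $V^D$ you invoke is a tacit assumption, shared with the paper.
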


\begin{proof}
By Lemma \ref{lem.est1}, the sequence $(V_m)$ is bounded in $L^\infty(0,T;H^1(\Omega))$. Hence, there exists a subsequence (not relabeled) such that, as $m\to\infty$,
\begin{align*}
  V_m\rightharpoonup V^*\ \mbox{weakly* in }
  L^\infty(0,T;L^2(\Omega)), \quad
  \na_mV\rightharpoonup \Phi\ \mbox{weakly* in }
  L^\infty(0,T;L^2(\Omega;\R^d)).
\end{align*} 
The arguments of \cite[Lemma 4.4]{CLP03} show that $\Phi=\na V^*$. 
\end{proof}

Finally, we prove the convergence of the edge densities.

\begin{lemma}\label{lem.P12}
There exists a subsequence (not relabeled) such that
\begin{align*}
  \pi_m^*\big(\overline{P}^{1/2}\big)\to (P^*)^{1/2}, \quad
  \pi_m^*\bigg(\frac{\widehat{P}}{\overline{P}^{1/2}}\bigg)
  \to (P^*)^{1/2}\quad\mbox{strongly in }L^2(\Omega_T),
\end{align*}
and similar convergences hold for $\overline{N}$ and $\overline{Q}$. 
\end{lemma}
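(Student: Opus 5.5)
The plan is to compare the dual-mesh reconstructions $\pi_m^*(\overline P^{1/2})$ and $\pi_m^*(\widehat P/\overline P^{1/2})$ with the primal reconstruction $\pi_m(P^{1/2})=(P_m)^{1/2}$. That reconstruction already converges strongly: by Lemma \ref{lem.convgrad}, $P_m\to P^*$ in $L^1(\Omega_T)$, and the elementary inequality $|\sqrt a-\sqrt b|^2\le|a-b|$ then gives $(P_m)^{1/2}\to(P^*)^{1/2}$ in $L^2(\Omega_T)$. So it suffices to show that both differences
\begin{align*}
  \pi_m^*\big(\overline P^{1/2}\big)-\pi_m(P^{1/2}), \qquad
  \pi_m^*\big(\widehat P/\overline P^{1/2}\big)-\pi_m(P^{1/2})
\end{align*}
tend to zero in $L^2(\Omega_T)$. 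The cases of $N$ and $Q$ are identical.

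For the first difference, note that $\overline P_\sigma^{1/2}=\frac12(P_K^{1/2}+P_{K,\sigma}^{1/2})$, so on $\Delta_\sigma$ the difference with $P_K^{1/2}$ or $P_{K,\sigma}^{1/2}$ is at most $|\D_{K,\sigma}\sqrt P|$. I will not use the $L^2$ gradient bound, which is only local (\eqref{5.grad1}). Instead I use $|\D_{K,\sigma}\sqrt P|^2\le|\D_{K,\sigma}\sqrt P|\,(P_K^{1/2}+P_{K,\sigma}^{1/2})=|\D_{K,\sigma}P|$. This reduces the $L^2$ norm squared of the difference to
\begin{align*}
  \sum_k\Delta t\sum_\sigma \m(\Delta_\sigma)|\D_{K,\sigma}P^k|,
\end{align*}
which is a discrete translate of $P_m$ of size $\Delta x$. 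Its vanishing amounts to an $L^1$ space-translate estimate. I would argue this as follows:
\begin{itemize}
\item[(a)] On $\Omega_D^{2\eps}$, Cauchy--Schwarz with \eqref{5.grad2} bounds the localized part by $C(\eps)\,\Delta x$.
\item[(b)] Near $\Gamma_D$, the sum is bounded by $C\int_{(0,T)\times(\Omega\setminus\Omega_D^{2\eps})}P_m$ plus neighbouring values. This is small uniformly in $m$ by the equi-integrability from the $L\log L$ bound in Lemma \ref{lem.est1}, together with $\m(\Delta_\sigma)\le C\m(K)$ from Lemma \ref{lem.meshregul}.
\end{itemize}
Letting first $m\to\infty$ and then $\eps\to0$ gives the claim.

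For the second difference, there are two cases. If the upwind value is $P_K$, then
\begin{align*}
  \widehat P_\sigma/\overline P_\sigma^{1/2}-P_K^{1/2}=P_K^{1/2}\big(P_K^{1/2}-\overline P_\sigma^{1/2}\big)/\overline P_\sigma^{1/2}.
\end{align*}
The prefactor satisfies $P_K^{1/2}/\overline P_\sigma^{1/2}\le 2$, so this is bounded by $2|\D_{K,\sigma}\sqrt P|$. The other case is symmetric, with $P_{K,\sigma}^{1/2}/\overline P_\sigma^{1/2}\le2$. Hence the same estimate as in the first step applies. The $L^2$ bound $\|\pi_m^*(\widehat P/\overline P^{1/2})\|_{L^2}^2\le C\sum\m(\Delta_\sigma)(P_K+P_{K,\sigma})\le C$ also follows from the mass bound. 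This bound is useful if the translate argument only gives convergence in measure, since Vitali plus equi-integrability of the squares (controlled by $P$) can then upgrade it.

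The main obstacle is step (b): controlling $\sum\m(\Delta_\sigma)|\D_{K,\sigma}P|$ up to the Dirichlet boundary, where only the $L^1$ gradient bound \eqref{4.L1grad} for $\sqrt P$ is available. If the equi-integrability route fails there, I would fall back on $|\D\sqrt P|^2\le|\D\sqrt P|\cdot 2\max(\sqrt{P_K},\sqrt{P_{K,\sigma}})$. Then I would split on $\{P\le M\}$, using \eqref{4.L1grad} and a factor $\Delta x$ from $\m(\Delta_\sigma)=\m(\sigma)\dist_\sigma/d$, and on $\{P>M\}$, using the $L\log L$ tail.
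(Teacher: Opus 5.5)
Your proposal is correct and is essentially the paper's argument. You compare with $P_m^{1/2}$, bound the edgewise discrepancy by $|\D_{K,\sigma}\sqrt{P}|$, control the part away from $\Gamma_D$ by the localized gradient bounds times the mesh size, and control the boundary layer by equi-integrability from the $L\log L$ bound; the paper does the same, using \eqref{5.grad1} directly and the uniform integrability of $\pi_m^*(\overline{P})$, and taking $\eps=\eta_m^{1/2}$ instead of letting $m\to\infty$ and then $\eps\to0$. The fallback in your last paragraph is not needed, and in step (a) the $L^1$ form of \eqref{5.grad2}, which is what its proof actually establishes, already suffices without Cauchy--Schwarz, because $\m(\Delta_\sigma)=\m(\sigma)\dist_\sigma/d$ with $\dist_\sigma\le 2\Delta x$.
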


\begin{proof}
We split the difference of $\pi_m^*(\overline{P}^{1/2})$ and $P_m^{1/2}$ into two parts:
\begin{align*}
  & \int_0^T\int_\Omega\big|\pi_m^*(\overline{P}^{1/2}) 
  - P_m^{1/2}\big|^2 dxdt
  = J_{1,m}^\eps + J_{2,m}^\eps, \quad\mbox{where} \\
  & J_{1,m}^\eps := \int_0^T\int_{\Omega_D^\eps}
  \big|\pi_m^*(\overline{P}^{1/2}) - P_m^{1/2}\big|^2 dxdt, \\
  & J_{2,m}^\eps := \int_0^T\int_{\Omega\backslash\Omega_D^\eps}
  \big|\pi_m^*(\overline{P}^{1/2}) - P_m^{1/2}\big|^2 dxdt.
\end{align*}
For the first term, let $\sigma=K|L\in\E_K$ and $(t,x)\in(t_{k-1},t_k]\times\Delta_\sigma$. Then
\begin{align}\label{5.aux2}
  \big|\pi_m^*(\overline{P}^{1/2})(t,x) - P_m^{1/2}(t,x)\big|
  = \big|(\overline{P}_\sigma)^{1/2} - (P_K^k)^{1/2}\big|
  \le \frac12\big|(P_{L})^{1/2} - (P_K^k)^{1/2}\big|.
\end{align}
Hence, by \eqref{5.grad1},
\begin{align*}
  J_{1,m}^\eps \le C\eta_m^2\int_0^T\int_{\Omega_D^\eps}
  |\na_m \sqrt{P}|^2 dxdt \le \frac{C}{\eps^2}\eta_m^2.
\end{align*}
For the boundary-layer term, we infer from \eqref{5.aux2} that
\begin{align*}
  \big|\pi_m^*(\overline{P}^{1/2})(t,x) - P_m^{1/2}(t,x)\big|
  \le \frac12\big((P_{L}^k)^{1/2} + (P_K^k)^{1/2}\big)
  = 2(\overline{P}_\sigma^k)^{1/2} = 2\pi_m^*(\overline{P}^{1/2}).
\end{align*}
Moreover, using the convexity of $s\mapsto s^2\log s^2$, Jensen's inequality, and Lemma \ref{lem.est1},
\begin{align*}
  \int_0^T\int_\Omega&\pi_m^*(\overline{P})\log\pi_m^*(\overline{P})
  dxdt = \sum_{k=1}^{N_T^m}\Delta t_m\sum_{\sigma=K|L\in\E_m} 
  \m(\Delta_\sigma)\overline{P}_\sigma^k\log \overline{P}_\sigma^k \\
  &\le C\sum_{k=1}^{N_T^m}\Delta t_m\sum_{\sigma=K|L\in\E_m}
  \m(\Delta_\sigma)\big(P_L^k\log P_L^k + P_K^k\log P_K^k\big)
  \le C.,
\end{align*} 
Hence, the sequence $(\pi_m^*(\overline{P}))_m$ is uniformly integrable in $\Omega_T$, and thus, $\sup_{m\in\N}J_{2,m}^\eps\to 0$ as $\eps\to 0$. We choose $\eps=\eta_m^{1/2}$ to find that
\begin{align*}
  \pi_m^*(\overline{P}^{1/2}) - P_m^{1/2}\to 0 
  \quad\mbox{strongly in }L^2(\Omega_T). 
\end{align*}
We deduce from $P_m^{1/2}\to(P^*)^{1/2}$ strongly in $L^2(\Omega_T)$ (see Lemma \ref{lem.convgrad}) that
\begin{align*}
  \pi_m^*(\overline{P}^{1/2}) \to (P^*)^{1/2}\quad
  \mbox{strongly in }L^2(\Omega_T).
\end{align*}

The second convergence in the lemma follows from the same arguments, taking into account that
\begin{align*}
  \bigg|\frac{\widehat{P}_\sigma}{\overline{P}_\sigma^{1/2}}
  - \overline{P}_\sigma^{1/2}\bigg|
  = \frac{\widehat{P}_\sigma^{1/2} + \overline{P}_\sigma^{1/2}}{
  \overline{P}_\sigma^{1/2}}
  \big|\widehat{P}_\sigma^{1/2} - \overline{P}_\sigma^{1/2}\big|
  \le C|P_{K,\sigma}^{1/2} - P_K^{1/2}|
\end{align*}
for every edge $\sigma\in\E_K$.
\end{proof}


\subsection{Global convergence of the square-root gradients}

We now identify the limits of the discrete fluxes and derive the global convergence of the discrete gradients of the square root of the densities. We first note that the Bernoulli factor converges to one as $m\to\infty$.

\begin{lemma}\label{lem.B}
It holds that $\pi_m^*(B(|\D_{K,\sigma}V|))\to 1$ strongly in $L^2(\Omega_T)$ as $m\to\infty$. 
\end{lemma}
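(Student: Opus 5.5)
The plan is to use the elementary bound $0\le 1-B(s)\le \min\{1,s/2\}$ for $s\ge0$ and to show that the piecewise constant function $\pi_m^*(|\D_{K,\sigma}V|)$ tends to zero in $L^2(\Omega_T)$, with a rate of order $\Delta x_m$. Since $0\le 1-B\le 1$ and $1-B(s)\le s/2$, we have pointwise
\begin{align*}
  |1-\pi_m^*(B(|\D_{K,\sigma}V|))|^2 \le \frac14\,\pi_m^*\big(|\D_{K,\sigma}V|^2\big).
\end{align*}
Thus it suffices to bound the right-hand side in $L^1(\Omega_T)$.

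On a dual cell $\Delta_\sigma$ and time interval $(t_{k-1},t_k]$, the value is $|\D_{K,\sigma}V^k|^2$. Integrating and summing over edges and time steps gives
\begin{align*}
  \int_0^T\int_\Omega \pi_m^*\big(|\D_{K,\sigma}V|^2\big)dxdt
  = \sum_{k=1}^{N_T^m}\Delta t_m\sum_{\sigma\in\E_m}\m(\Delta_\sigma)|\D_{K,\sigma}V^k|^2.
\end{align*}
By \eqref{2.mdelta}, $\m(\Delta_\sigma)=\m(\sigma)\dist_\sigma/d=\tau_\sigma\dist_\sigma^2/d$. Moreover, $\dist_\sigma\le 2\Delta x_m$, since $x_K,x_L$ lie in the closures of their cells, or $\dist_\sigma\le\Delta x_m$ for boundary edges. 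Hence the sum is bounded by
\begin{align*}
  C(\Delta x_m)^2\sum_{k=1}^{N_T^m}\Delta t_m\sum_{\sigma\in\E_m}\tau_\sigma|\D_{K,\sigma}V^k|^2.
\end{align*}
The last double sum is at most $CT$ by Lemma \ref{lem.est1}(ii). Indeed, $\sum_\sigma\m(\Delta_\sigma)|\na_\sigma^{\Delta x}V^k|^2 = d\sum_\sigma\tau_\sigma|\D_{K,\sigma}V^k|^2$ by \eqref{2.grad} and \eqref{2.mdelta}. Therefore $\|1-\pi_m^*(B(|\D_{K,\sigma}V|))\|_{L^2(\Omega_T)}^2\le C(\Delta x_m)^2\to 0$.

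The only point requiring care, which I regard as the main (mild) obstacle, is the bookkeeping at the boundary edges. On Dirichlet edges, $\D_{K,\sigma}V^k=V_\sigma^D-V_K^k$ is included in the discrete $H^1$ seminorm controlled by Lemma \ref{lem.est1}(ii). That lemma actually bounds the seminorm of $V^k-V^D$, so I will add the term $\tau_\sigma|\D_{K,\sigma}V^D|^2$, which is bounded by $C\|\na V^D\|_{L^\infty}^2(\m(K)+\m(L))$ using the mesh estimate from the proof of Proposition \ref{prop.dei}. Strictly, (H3) only gives $\log N^D-V^D\in W^{1,\infty}$; if $V^D$ alone is not Lipschitz, I would instead use that the discrete $H^1$ norm of the projection $V^D_K$ is bounded by $C\|V^D\|_{H^1}$, which is the standard estimate. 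On Neumann edges, $\D_{K,\sigma}V^k=0$ by \eqref{3.bc}, so $B=1$ there and these edges contribute nothing. With these contributions accounted for, the uniform bound on $\sum_k\Delta t\sum_\sigma\tau_\sigma|\D_{K,\sigma}V^k|^2$ holds and the claim follows.
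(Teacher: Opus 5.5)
Your proof is correct and is essentially the paper's argument. The paper likewise uses $(1-B(s))^2\le s^2/4$, the relation $|\D_{K,\sigma}V^k|=\dist_\sigma|\na_\sigma^{\Delta x}V^k|/d\le C\eta_m|\na_mV|$ on each diamond, and the uniform $L^2(\Omega_T)$ bound on $\na_mV$ from Lemma~\ref{lem.est1}(ii), which gives an $O(\eta_m^2)$ bound. Your remark that the free energy controls only the seminorm of $V^k-V^D$ is legitimate bookkeeping that the paper glosses over: the seminorm of the projected $V^D$ must be added, which tacitly requires $V^D\in H^1(\Omega)$, an assumption the paper also uses without stating it. Your handling of the Dirichlet and Neumann edges is right.
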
 

\begin{proof}
The inequalities $0\le 1-B(s)\le s/2$ for $s\ge 0$ imply that
\begin{align*}
  \|\pi_m^*(B(|\D_{K,\sigma}V|))-1\|_{L^2(\Omega_T)}^2
  \le C\int_0^T\int_\Omega\eta_m^2|\na_m V|^2 dxdt \le C\eta_m^2
  \to 0,
\end{align*}
using the uniform boundedness of $(\na_m V)$ in $L^2(\Omega_T)$. 
\end{proof}

Next, we identify the limit of the flux combinations. Recall definition \eqref{4.widehatN} of $\widehat{N}_m$, $\widehat{P}_m$, and $\widehat{Q}_m$. We set $\D_\sigma V_m := (\D_{K,\sigma}V_K^k)_{K\in\T_m,k=1,\ldots,N_T^m}$.

\begin{lemma}\label{lem.flux2}
It holds that
\begin{align*}
  \pi_m^*\bigg(2B(|\D_{\sigma}V_m|)\na^m\sqrt{N}
  - \frac{\widehat{N}}{\overline{N}^{1/2}}\na^m V\bigg)
  &\rightharpoonup\Phi_N, \\
  \pi_m^*\bigg(2B|\D_{\sigma}V_m|)\na^m\sqrt{P}
  + \frac{\widehat{P}}{\overline{P}^{1/2}}\na^m V\bigg)
  &\rightharpoonup\Phi_P, \\
  \pi_m^*\bigg(2B(|\D_{\sigma}V_m|)\na^m\sqrt{Q}
  + \frac{\widehat{Q}}{\overline{Q}^{1/2}}\na^m V\bigg)
  &\rightharpoonup\Phi_Q
\end{align*}
weakly in $L^2(\Omega_T)$, where the limits are given by
\begin{align*}
  \Phi_N&= 2\na\sqrt{N^*} - \sqrt{N^*}\na V^*, \\
  \Phi_P&= 2\na\sqrt{P^*} + \sqrt{P^*}\na V^*, \\
  \Phi_Q&= 2\na\sqrt{Q^*} + \sqrt{Q^*}\na V^*.
\end{align*}
\end{lemma}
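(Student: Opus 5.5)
The plan is to combine a uniform $L^2$ bound, which gives weak compactness, with an identification of the limit in the distributional sense on interior sets. I treat $P$; the cases $N$ and $Q$ are identical, with the sign of the drift changed for $N$. Denote by $\Psi_m^P$ the reconstructed flux combination in the statement. By Proposition \ref{prop.square}, $\tau_\sigma^*\ge\tau_\sigma/4$, and the dissipation bound of Lemma \ref{lem.est1}(iii), estimate \eqref{4.dissP} of Lemma \ref{lem.flux} states that $(\Psi_m^P)$ is bounded in $L^2(\Omega_T;\R^d)$. Note that the factor $\na^mV$ is implicit there, via $\D_{K,\sigma}V$ and \eqref{2.grad}. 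Hence a subsequence converges weakly in $L^2(\Omega_T)$ to some $\Phi_P$. It then suffices to identify $\Phi_P$ on $(0,T)\times\omega$ for every $\omega\Subset\Omega\cup\Gamma_N$. These sets exhaust $\Omega$ up to the null set $\Gamma_D$, and $\Phi_P$ is already known to lie in $L^2(\Omega_T)$. Along the way, uniqueness of the limit makes the whole subsequence converge.

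To identify the limit, I fix $\omega\Subset\Omega\cup\Gamma_N$ and $\eps>0$ with $\omega\subset\Omega_D^{2\eps}$, and split $\Psi_m^P=2\pi_m^*(B(|\D_\sigma V|))\na_m\sqrt P+\pi_m^*(\widehat P/\overline P^{1/2})\na_mV$.

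For the first product, write $\pi_m^*(B)\na_m\sqrt P=\na_m\sqrt P+(\pi_m^*(B)-1)\na_m\sqrt P$. By Lemma \ref{lem.convgrad}, $\na_m\sqrt P\rightharpoonup\na\sqrt{P^*}$ weakly in $L^2((0,T)\times\omega)$. For the remainder, $|1-B(s)|\le s/2$ and $|\D_{K,\sigma}V|\le C\eta_m|\na_\sigma V|$ give
\[
\|(\pi_m^*(B)-1)\na_m\sqrt P\|_{L^1((0,T)\times\omega)}\le C\eta_m\|\na_mV\|_{L^2}\|\na_m\sqrt P\|_{L^2((0,T)\times\omega)}\le C\eta_m\eps^{-1}\to0,
\]
using Lemma \ref{lem.est1}(ii) and \eqref{5.grad1}. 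Since $B\in[0,1]$, the remainder is also bounded in $L^2$, so it tends to $0$ weakly in $L^2$. Lemma \ref{lem.B} could be used instead.

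The second product is strong times weak. By Lemma \ref{lem.P12}, $\pi_m^*(\widehat P/\overline P^{1/2})\to\sqrt{P^*}$ strongly in $L^2(\Omega_T)$. By Lemma \ref{lem.V}, $\na_mV\rightharpoonup\na V^*$ weakly in $L^2(\Omega_T)$. Hence the product converges weakly in $L^1$, that is, against $L^\infty$ test functions, to $\sqrt{P^*}\na V^*$.

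Together, the two pieces show that $\Psi_m^P\to2\na\sqrt{P^*}+\sqrt{P^*}\na V^*$ in the sense of distributions on $(0,T)\times\omega$. Since distributional limits are unique, $\Phi_P$ equals this expression on $(0,T)\times\omega$, and therefore a.e.\ in $\Omega_T$. As a by-product, $2\na\sqrt{P^*}+\sqrt{P^*}\na V^*\in L^2(\Omega_T)$ globally, even though $\na\sqrt{P^*}$ was only known locally.

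The main obstacle is the mismatch of topologies: the flux combination is bounded in $L^2$ up to $\Gamma_D$, but the square-root gradient is controlled in $L^2$ only locally, with a constant of order $\eps^{-2}$. I handle this by identifying the limit locally and relying on the global $L^2$ bound. This also requires the weak $L^2$ convergence of $\na_m\sqrt P$ on $\omega$ (interior consistency of the discrete gradient, \cite{CLP03}) and the strong convergence of the upwind edge values in Lemma \ref{lem.P12}, both of which I take as given.
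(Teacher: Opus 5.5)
Your proposal is correct and follows the paper's proof. The uniform $L^2$ bound from Lemma~\ref{lem.flux} gives a weak subsequential limit, which is identified by splitting into the Bernoulli part (local weak convergence of $\na_m\sqrt{P}$ from Lemma~\ref{lem.convgrad}) and the upwind part (strong convergence from Lemma~\ref{lem.P12} times weak convergence of $\na_m V$ from Lemma~\ref{lem.V}); the only cosmetic difference is that you treat the factor $1-\pi_m^*(B(|\D_\sigma V|))$ by a direct $L^1$ estimate with \eqref{5.grad1}, whereas the paper uses the strong convergence in Lemma~\ref{lem.B} and tests against $\varphi\in C_0^\infty(\Omega_T;\R^d)$, which amounts to the same thing.
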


\begin{proof}
We know from Lemma \ref{lem.flux} that the sequence
\begin{align*}
  \pi_m^*\bigg(2B(|\D_{\sigma}V_m|)\na^m\sqrt{P}
  + \frac{\widehat{P}}{\overline{P}^{1/2}}\na^m V\bigg)
\end{align*}
is bounded in $L^2(\Omega_T)$, so it converges (up to a subsequence) weakly in $L^2(\Omega_T)$ to some function $\Phi_P$. To identify this limit, let $\varphi\in C_0^\infty(\Omega_T;\R^d)$. We write
\begin{align*}
  & \int_0^T\int_\Omega\pi_m^*\bigg(2B(|\D_{\sigma}V_m|)\na^m\sqrt{P}
  + \frac{\widehat{P}}{\overline{P}^{1/2}}\na^m V\bigg)\cdot\varphi
  dxdt = J_{3,m} + J_{4,m}, \quad\mbox{where} \\
  & J_{3,m} = 2\int_0^T\int_\Omega \pi_m^*(B(|\D_{\sigma}V_m|))
  \na_m\sqrt{P}\cdot\varphi dxdt, \\
  & J_{4,m} = \int_0^T\int_\Omega\pi_m^*\bigg(\frac{\widehat{P}}{
  \overline{P}^{1/2}}\na^m V\bigg)\cdot\varphi dxdt.
\end{align*}
Lemma \ref{lem.B} implies that $\pi_m^*(B(|\D_\sigma V_m|))\to 1$ strongly in $L^2(\Omega_T)$, while Lemma \ref{lem.convgrad} gives $\na_m\sqrt{P}\rightharpoonup\na\sqrt{P^*}$ weakly in $L^2((0,T)\times\omega;\R^d)$ for every $\omega\Subset\Omega\cup\Gamma_N$. In particular, since $\varphi$ has compact support,
\begin{align*}
  \na_m\sqrt{P}\cdot\varphi\rightharpoonup\na\sqrt{P^*}\cdot\varphi
  \quad\mbox{weakly in }L^2(\Omega_T).
\end{align*}
Thus, the product of $\pi_m^*(B(|\D_\sigma V_m|))$ and $\na_m\sqrt{P}$ converges weakly in $L^1(\Omega_T)$:
\begin{align*}
  J_{3,m}\to 2\int_0^T\int_\Omega\na\sqrt{P^*}\cdot\varphi dxdt.
\end{align*}

For the term $J_{4,m}$, we observe that by Lemma \ref{lem.P12},
$\pi_m^*(\widehat{P}/\overline{P}^{1/2})\to \sqrt{P^*}$ strongly in $L^2(\Omega_T)$, while Lemma \ref{lem.V} yields $\na_m V\rightharpoonup\na V^*$ weakly* in $L^\infty(0,T;L^2(\Omega;\R^d))$. Hence,
\begin{align*}
  J_{4,m}\to \int_0^T\int_\Omega \sqrt{P^*}\na V^*\cdot\varphi dxdt.
\end{align*}
This shows the result for $P^*$. The cases $N^*$ and $Q^*$ are treated in the same way.
\end{proof}

We are able to extend the local result of Lemma \ref{lem.convgrad} globally in $\Omega$. 

\begin{lemma}
It holds that
\begin{align*}
  \na_m\sqrt{N}\rightharpoonup \na\sqrt{N^*}, \quad 
  \na_m\sqrt{P}\rightharpoonup \na\sqrt{P^*}, \quad
  \na_m\sqrt{Q}\rightharpoonup \na\sqrt{Q^*}
\end{align*}
weakly in $L^1(\Omega_T;\R^d)$. 
\end{lemma}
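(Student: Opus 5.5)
The proof has two steps. First we show that $(\na_m\sqrt{P})$ is weakly relatively compact in $L^1(\Omega_T;\R^d)$. Then we identify every weak limit as $\na\sqrt{P^*}$ by using the local result of Lemma \ref{lem.convgrad}. The arguments for $N$ and $Q$ are identical.

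\textbf{Step 1: equi-integrability.} Boundedness in $L^1$ alone (estimate \eqref{4.L1grad}) does not give weak compactness, so we need equi-integrability. We use the algebraic decomposition from the proof of Proposition \ref{prop.second}, applied pointwise on each diamond $\Delta_\sigma$:
\begin{align*}
  2\na_m\sqrt{P} = \pi_m^*\bigg(2B(|\D_\sigma V_m|)\na^m\sqrt{P}
  + \frac{\widehat{P}}{\overline{P}^{1/2}}\na^m V\bigg)
  + \pi_m^*\big(2(1-B(|\D_\sigma V_m|))\na^m\sqrt{P}\big)
  - \pi_m^*\bigg(\frac{\widehat{P}}{\overline{P}^{1/2}}\na^m V\bigg).
\end{align*}
We treat the three terms separately.
\begin{itemize}
\item The first term is bounded in $L^2(\Omega_T)$ by Lemma \ref{lem.flux}, hence it is equi-integrable.
\item For the third term we use $\widehat{P}_\sigma\le 4\overline{P}_\sigma$, so it is pointwise bounded by $4\pi_m^*(\overline{P}^{1/2})|\na_m V|$. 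The factor $\pi_m^*(\overline{P}^{1/2})$ converges strongly in $L^2$ by Lemma \ref{lem.P12}, so its square is equi-integrable, and $\na_m V$ is bounded in $L^2$. By Cauchy--Schwarz, for any measurable $E$ we get $\int_E|\cdot|\le 4\|\pi_m^*(\overline P^{1/2})\|_{L^2(E)}\|\na_mV\|_{L^2}$, which tends to $0$ uniformly in $m$ as $\m(E)\to0$.
\item The second term is the delicate one. Using $1-B(s)\le \min\{1,s/2\}$ and $|\D_{K,\sigma}\sqrt P|\le 2\overline P_\sigma^{1/2}$, we bound it by $\min\{|\na_m\sqrt P|,\, C\pi_m^*(\overline{P}^{1/2})|\na_m V|\}$, because $(1-B(|\D V|))|\D\sqrt P|/\dist_\sigma\le |\D V|\,\overline P^{1/2}/\dist_\sigma$. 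This has the same structure as the third term and is therefore equi-integrable.
\end{itemize}
Hence $(\na_m\sqrt P)$ is equi-integrable and bounded in $L^1$. By Dunford--Pettis, a subsequence converges weakly in $L^1(\Omega_T;\R^d)$ to some $G$.

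\textbf{Step 2: identification of the limit.} For every $\omega\Subset\Omega\cup\Gamma_N$, Lemma \ref{lem.convgrad} gives $G=\na\sqrt{P^*}$ a.e.\ in $(0,T)\times\omega$. Since these sets $\omega$ exhaust $\Omega$ up to a null set, $G$ is identified a.e. More directly, for $\varphi\in C_0^\infty(\Omega_T;\R^d)$ the discrete integration by parts of \cite[Lemma 4.4]{CLP03}, together with $\sqrt{P_m}\to\sqrt{P^*}$ in $L^2$, gives $\int G\cdot\varphi=-\int\sqrt{P^*}\diver\varphi$. Hence $G=\na\sqrt{P^*}$ in the distributional sense, $\sqrt{P^*}\in L^1(0,T;W^{1,1}(\Omega))$, and by uniqueness of the limit the whole subsequence converges.

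The main obstacle is the equi-integrability of the second term near $\Gamma_D$, where no $L^2$ gradient bound is available. I expect the bound $(1-B(s))\le s/2$ combined with the edge estimate $|\D\sqrt P|\le 2\overline P^{1/2}$ to handle it, reducing it to products of a strongly $L^2$-convergent factor and an $L^2$-bounded factor.
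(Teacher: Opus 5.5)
Your proof is correct. You use the same three-term splitting as the paper: its $J_{7,m}$, $J_{6,m}$, $J_{5,m}$ are, up to the factor $\frac12$ and a sign, your first, second and third terms. Where you differ is in how you close the argument.

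\textbf{The paper's route.} It identifies the weak limit of each piece separately.
\begin{itemize}
\item $J_{7,m}$ is handled by Lemma \ref{lem.flux2}, which itself relies on Lemma \ref{lem.B}.
\item $J_{5,m}$ is treated as the product of the strongly convergent $\pi_m^*(\widehat P/\overline P^{1/2})$ and the weakly convergent $\na_mV$.
\item The Bernoulli-defect term $J_{6,m}$ is shown to vanish, in fact in $L^1(\Omega_T)$ norm. On $\Omega_D^\eps$ the bound $|\D_{K,\sigma}V|\le C\eta_m|\na_mV|$ and the local estimate \eqref{5.grad1} give $O(\eta_m/\eps)$. The boundary layer is controlled by uniform integrability of $\pi_m^*(\overline P)$, and one then takes $\eps=\sqrt{\eta_m}$.
\end{itemize}

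\textbf{Your route.} You use the splitting only to get equi-integrability. For that, a single global pointwise bound suffices: $|(1-B(|\D_\sigma V_m|))\na_m\sqrt P|\le 2\pi_m^*(\overline P^{1/2})|\na_mV|$. You then apply Dunford--Pettis and identify the limit through the interior convergence of Lemma \ref{lem.convgrad} on an exhaustion of $\Omega$.

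\textbf{Comparison.} Your argument is more economical. It needs neither Lemma \ref{lem.flux2}, Lemma \ref{lem.B}, nor the $\eta_m/\eps$ balancing. The interior Fisher-information bound enters only through Lemma \ref{lem.convgrad}. The paper's argument gives more information: the explicit limit of each component, and the strong vanishing of the Bernoulli defect. It also reuses Lemma \ref{lem.flux2}, which the paper needs anyway to pass to the limit in the continuity equations.
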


\begin{proof}
Lemma \ref{lem.flux} provides an $L^1(\Omega_T)$ bound for $\na_m\sqrt{N}$, $\na\sqrt{P}$, and $\na_m\sqrt{Q}$. This is not sufficient to conclude weak convergence, and we need to exploit the dissipation structure in more detail. More precisely, we decompose $\na_m\sqrt{P} = J_{5,m}+J_{6,m}+J_{7,m}$, where
\begin{align*}
  J_{5,m} &= -\frac12\pi_m^*\bigg(\frac{\widehat{P}}{\overline{P}^{1/2}}\bigg)
  \na^m V, \quad
  J_{6,m} = \big(1-\pi_m^*(B(|\D_\sigma V_m|))\big)
  \na_m\sqrt{P}, \\
  J_{7,m} &= \frac12\pi_m^*\bigg(2B(|\D_{K,\sigma} V_m|)
  \na_m\sqrt{P} + \frac{\widehat{P}}{\overline{P}^{1/2}}
  \na^m V\bigg).
\end{align*}
It follows from Lemma \ref{lem.flux2} that
\begin{align*}
  J_{7,m} \rightharpoonup \na\sqrt{P^*} + \frac12\sqrt{P^*}\na V^*
  \quad \mbox{weakly in }L^2(\Omega_T;\R^d),
\end{align*}
and we infer from Lemmas \ref{lem.V} and \ref{lem.P12} that
\begin{align*}
  J_{5,m}\rightharpoonup -\frac12\sqrt{P^*}\na V^*
  \quad \mbox{weakly in }L^1(\Omega_T;\R^d).
\end{align*}
We claim that $J_{6,m}\rightharpoonup 0$ weakly in $L^1(\Omega_T;\R^d)$,
which shows the result.

To show the claim, let $\varphi\in L^\infty(\Omega_T;\R^d)$. Because of $0\le 1-B(s)\le s/2$ for $s\ge 0$, 
\begin{align*}
  \bigg|\int_0^T\int_\Omega J_{6,m}\cdot\varphi dxdt\bigg|
  \le \frac12\int_0^T\int_\Omega\pi_m^*(|\D_{K,\sigma}V_m|)
  |\na_m\sqrt{P}||\varphi|dxdt.
\end{align*}
Let $\eps>0$ and split the integral over $\Omega_T= ((0,T)\times\Omega_D^\eps)\cup((0,T)\times(\Omega\backslash\Omega_D^\eps))$. On the first set, we use $|\D_{K,\sigma} V_m|\le C\eta_m|\na_m V|$ and the local $L^2$ bound for $\na_m\sqrt{P}$ from \eqref{5.grad1}. This yields
\begin{align*}
  \bigg|\int_0^T\int_{\Omega_D^\eps}J_{6,m}\cdot\varphi dxdt\bigg|
  \le\frac{C}{\eps}\eta_m\|\varphi\|_{L^\infty(\Omega_T)}. 
\end{align*} 
On the boundary layer, we estimate $|\D_\sigma V_m||\na_m\sqrt{P}|\le 2|\na_m V|\pi_m^*(\overline{P}^{1/2})$ and use the $L^2(\Omega_T)$ bound for $\na_mV$ from Lemma \ref{lem.est1} as well as the uniform integrability of $\pi_m^*(\overline{P}^{1/2})$ from Lemma \ref{lem.P12} to conclude that
\begin{align*}
  \sup_{m\in\N}\bigg|\int_0^T\int_{\Omega\backslash\Omega_D^\eps}
  J_{6,m}\cdot\varphi dxdt\bigg|\to 0 \quad\mbox{as }\eps\to 0.
\end{align*} 
Thus, choosing $\eps=\sqrt{\eta_m}$, we obtain $J_{6,m}\rightharpoonup 0$ weakly in $L^1(\Omega_T;R^d)$. 
\end{proof}


\subsection{Passage to the limit}

We now pass to the limit in the finite-volume scheme. First, we provide an auxiliary lemma on the convergence of the discrete gradients. A weak* consistency of the reconstructed discrete gradient is established in \cite[Lemma 3.7]{ScSe22}. Here, we need an integrated first-order consistency estimate at the level of a diamond cell.

\begin{lemma}[Consistency of the discrete gradient]\label{lem.cons}
Let $\zeta\in C_0^\infty(\Omega_T)$ and define $\zeta_K^k:=\zeta(t_k,x_K)$ for $K\in\T_m$ and $k=0,\ldots,N_T^m$. Then, there exists $C>0$ independent of $m$ such that for $k=1,\ldots,N_T^m$ and $\sigma\in\E_m$,
\begin{align*}
  \bigg|\Delta t_m\m(\Delta_\sigma)\na_\sigma^m\zeta^k
  - d\int_{t_{k-1}}^{t_k}\int_{\Delta_\sigma}\na\zeta(t,x)dxdt\bigg|
  \le C\eta_m\Delta t_m\m(\Delta_\sigma)
  \|\zeta\|_{W^{2,\infty}(\Omega_T)}.
\end{align*}
\end{lemma}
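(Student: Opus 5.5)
The plan is to compare both sides through a Taylor expansion of $\zeta$ around a reference point of the space--time diamond $(t_{k-1},t_k]\times\Delta_\sigma$. For $\sigma=K|L\in\E_{\rm int}$ I take the intersection point $x_\sigma$ of the segment $\overline{x_Kx_L}$ with $\sigma$, and the time $t_k$. For exterior edges I take $x_L$ replaced by the orthogonal projection of $x_K$ onto $\sigma$; this is consistent with $\zeta_\sigma$, since $\zeta$ has compact support and $\zeta_\sigma=0$ for $\sigma$ in the support layer once $\eta_m$ is small.

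\emph{Step 1: time discretization.} I would write
\[
\int_{t_{k-1}}^{t_k}\int_{\Delta_\sigma}\na\zeta\,dxdt=\Delta t_m\int_{\Delta_\sigma}\na\zeta(t_k,x)dx+R_1,
\]
with $|R_1|\le \Delta t_m^2\m(\Delta_\sigma)\|\pa_t\na\zeta\|_\infty\le \eta_m\Delta t_m\m(\Delta_\sigma)\|\zeta\|_{W^{2,\infty}}$. After this step, everything is frozen at $t=t_k$.

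\emph{Step 2: spatial comparison.} Since $\operatorname{diam}(\Delta_\sigma)\le C\Delta x_m$ by the mesh regularity \eqref{2.meshreg}, I have $\int_{\Delta_\sigma}\na\zeta(t_k,x)dx=\m(\Delta_\sigma)\na\zeta(t_k,x_\sigma)+O(\eta_m\m(\Delta_\sigma)\|\zeta\|_{W^{2,\infty}})$. On the discrete side, $x_L-x_K=\dist_\sigma\nu_{K,\sigma}$ by admissibility. A second-order Taylor expansion of $\zeta(t_k,\cdot)$ around $x_\sigma$, evaluated at $x_K$ and $x_L$, gives
\[
\D_{K,\sigma}\zeta^k=\dist_\sigma\,\na\zeta(t_k,x_\sigma)\cdot\nu_{K,\sigma}+O(\dist_\sigma^2\|\zeta\|_{W^{2,\infty}}).
\]
Hence $\m(\Delta_\sigma)\na_\sigma^m\zeta^k=d\,\m(\Delta_\sigma)(\na\zeta(t_k,x_\sigma)\cdot\nu_{K,\sigma})\nu_{K,\sigma}+O(\eta_m\m(\Delta_\sigma))$. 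Multiplying by $\Delta t_m$ and using \eqref{2.mdelta} then shows that the normal components of the two sides agree up to $C\eta_m\Delta t_m\m(\Delta_\sigma)\|\zeta\|_{W^{2,\infty}}$.

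\emph{Step 3: tangential component (main obstacle).} It remains to control $d\Delta t_m\m(\Delta_\sigma)\,\Pi_\sigma\na\zeta(t_k,x_\sigma)$, where $\Pi_\sigma$ is the projection onto $\sigma$, because $\na_\sigma^m\zeta^k$ is purely normal. This term is not small by Taylor expansion alone: it is of order $\Delta t_m\m(\Delta_\sigma)$ rather than $\eta_m\Delta t_m\m(\Delta_\sigma)$. Note that the factor $d$ in $\na_\sigma^m$ is designed so that summing over the diamonds adjacent to a cell reproduces the full gradient, via $\sum_{\sigma\in\E_K}\m(\sigma)\nu_{K,\sigma}\otimes(x_\sigma-x_K)=\m(K)I$.

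My first attempt would be to write $\int_{\Delta_\sigma}\na\zeta\,dx=\int_{\pa\Delta_\sigma}\zeta\,n\,ds$ by the divergence theorem. Then I would pair the faces of $\pa\Delta_\sigma$, namely the two half-diamonds with apex $x_K$ and $x_L$, and compare this boundary integral directly with $\m(\sigma)(\zeta_L-\zeta_K)\nu_{K,\sigma}$. The goal would be to show that, by the symmetry of the two half-diamonds with respect to $\sigma$, the tangential boundary contributions combine into the normal one up to $O(\eta_m\m(\Delta_\sigma))$.

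If this cancellation does not close, the fallback is to carry Steps 1--2 through and isolate the tangential remainder explicitly. That remainder is exactly the quantity that must be shown to vanish when the estimate is used in the limit passage.
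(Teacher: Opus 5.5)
Your Steps 1 and 2 are correct, and the obstruction you isolate in Step 3 is real. It cannot be removed, because the statement is false for $d\ge 2$; it holds only for $d=1$.

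Here is a counterexample. On $\Omega=(0,1)^2$, take the uniform grid of squares of side $h_m=1/m$ with $x_K$ the cell centres. This mesh is admissible and satisfies \eqref{2.meshreg} with $\xi_1=\xi_2=\frac12$. Take $\zeta\in C_0^\infty(\Omega_T)$ with $\zeta(t,x)=x_2$ on $I\times B$, for an interval $I$ and a ball $B\Subset\Omega$. Consider any vertical edge $\sigma=K|L$ with $\Delta_\sigma\subset B$ and $(t_{k-1},t_k]\subset I$. Then $\zeta_L^k=\zeta_K^k$, hence $\na_\sigma^m\zeta^k=0$. On the other hand, $d\int_{t_{k-1}}^{t_k}\int_{\Delta_\sigma}\na\zeta\,dx\,dt=2\Delta t_m\m(\Delta_\sigma)e_2$. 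So the left-hand side equals $2\Delta t_m\m(\Delta_\sigma)$, which is not $O(\eta_m\Delta t_m\m(\Delta_\sigma))$.

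In general the left-hand side equals $d\,\Delta t_m\m(\Delta_\sigma)|\Pi_\sigma\na\zeta(t_k,x_\sigma)|+O(\eta_m\Delta t_m\m(\Delta_\sigma))$, exactly as you computed. This also shows that your divergence-theorem attempt cannot close. For $\zeta$ affine on $\Delta_\sigma$ one has $\int_{\pa\Delta_\sigma}\zeta n\,ds=\m(\Delta_\sigma)\na\zeta$ exactly, so no symmetry of the half-diamonds can cancel the tangential part.

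The paper's proof does not overcome this either. Its Taylor expansion yields only the scalar relation $\Delta t_m\m(\Delta_\sigma)(\zeta_K^k-\zeta_L^k)=(x_K-x_L)\cdot\int_{t_{k-1}}^{t_k}\int_{\Delta_\sigma}\na\zeta\,dx\,dt+O(\eta_m\dist_\sigma\Delta t_m\m(\Delta_\sigma))$, which is your Step 2. It then asserts the full vector estimate, but that follows only for the component along $\nu_{K,\sigma}$.

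What is true, and what your Steps 1--2 prove, is the estimate with $d\int_{t_{k-1}}^{t_k}\int_{\Delta_\sigma}\na\zeta\,dx\,dt$ replaced by $d\,(\nu_{K,\sigma}\otimes\nu_{K,\sigma})\int_{t_{k-1}}^{t_k}\int_{\Delta_\sigma}\na\zeta\,dx\,dt$. Equivalently, it is the stated bound after taking the scalar product with any vector parallel to $\nu_{K,\sigma}$.

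This weaker version is all the later argument needs. In $R_2$ and $R_4$ the difference is dotted with $L_m^k$ and $\na_\sigma^m V^k$. Both are multiples of $\nu_{K,\sigma}$ on $\Delta_\sigma$, so the tangential remainder is annihilated identically rather than merely vanishing in the limit. Your fallback is therefore the right repair, but it has to be stated as a corrected lemma (normal component only). The lemma as written cannot be proved.
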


\begin{proof}
Let $(t,x)\in(t_{k-1},t_k]\times\Delta_\sigma$ for $\sigma=K|L$. A Taylor expansion gives $\zeta_K^k-\zeta_{L}^k = \na\zeta(t,x)\cdot(x_K-x_{L}) + O(\eta_m\dist_\sigma)$ and hence,
\begin{align*}
  \Delta t_m\m(\Delta_\sigma)(\zeta_K^k-\zeta_{L}^k)
  = (x_K-x_L)\cdot\int_{t_{k-1}}^{t_k}\int_{\Delta_\sigma}\na\zeta dxdt
  + O(\eta_m\dist_\sigma\Delta t_m\m(\Delta_\sigma)).
\end{align*}
This shows that
\begin{align*}
  \bigg|&d\Delta t_m\m(\Delta_m)\na_\sigma^m\zeta^k
  - d\int_{t_{k-1}}^{t_k}\int_{\Delta_\sigma}\na\zeta(t,x)dxdt\bigg| \\
  &= d\bigg|\Delta t_m\m(\Delta_\sigma)
  \frac{\zeta_K^k-\zeta_L^k}{\dist_\sigma}\frac{x_K-x_L}{|x_K-x_L|}
  - \int_{t_{k-1}}^{t_k}\int_{\Delta_\sigma}\na\zeta(t,x)dxdt\bigg|  
  = O(\eta_m\Delta t_m\m(\Delta_\sigma)),
\end{align*}
ending the proof.
\end{proof}

\begin{lemma}[Identifying the continuity equations]
The limit function $(N^*,P^*,Q^*,V^*)$ satisfies the continuity equations in \eqref{1.eq}. 
\end{lemma}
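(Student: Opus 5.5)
We verify the weak formulation (iii) of Definition \ref{def.weak} for $P^*$; the cases $N^*$ and $Q^*$ are analogous. For $Q^*$ the test function $\phi$ may be nonzero up to $\Gamma_D$, which requires the global $L^1$ convergence of the square-root gradients from the last lemma rather than only the local one.

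\emph{Discrete weak form.} Let $\varphi\in C_0^\infty([0,T)\times(\Omega\cup\Gamma_N))$ and set $\varphi_K^k:=\varphi(t_k,x_K)$. For $m$ large, $\varphi$ vanishes on all cells adjacent to Dirichlet edges. We multiply the scheme \eqref{3.NPQ} for $P$ by $\Delta t_m\varphi_K^{k-1}$, sum over $K$ and $k$, and sum by parts in time. We then apply the discrete integration-by-parts formula \eqref{2.dibp}. The boundary terms vanish: on Neumann edges because $\F_{K,\sigma}=0$, and on Dirichlet edges because $\varphi_\sigma=0$. This yields
\begin{align*}
  -\sum_{k}\Delta t_m\sum_K \m(K)P_K^k\frac{\varphi_K^k-\varphi_K^{k-1}}{\Delta t_m}
  - \sum_K\m(K)P_K^0\varphi_K^0
  + \sum_k\Delta t_m\sum_{\sigma}\F_{K,\sigma}[P^k,V^k]\D_{K,\sigma}\varphi^{k-1} = 0 .
\end{align*}
The first two terms converge to $-\int_0^T\int_\Omega P^*\pa_t\varphi$ and $-\int_\Omega P^{\rm in}\varphi(0)$. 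This follows from the strong $L^1$ convergence in Lemma \ref{lem.convgrad}, the $L^1$ convergence of the cell averages $P^0$, and the uniform convergence of the difference quotients of smooth $\varphi$.

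\emph{Flux term.} By Proposition \ref{prop.square} and $\D_{K,\sigma}P=2\overline P_\sigma^{1/2}\D_{K,\sigma}\sqrt P$, the upwind form \eqref{2.flux3} gives
\begin{align*}
  \F_{K,\sigma}[P,V] = \tau_\sigma\overline P_\sigma^{1/2}\Big(2B(|\D_{K,\sigma}V|)\D_{K,\sigma}\sqrt P + \frac{\widehat P_\sigma}{\overline P_\sigma^{1/2}}\D_{K,\sigma}V\Big).
\end{align*}
Using $\tau_\sigma\dist_\sigma=d\,\m(\Delta_\sigma)/\dist_\sigma\cdot\dist_\sigma$ and definition \eqref{2.grad}, together with Lemma \ref{lem.cons}, the flux term equals
\begin{align*}
  \frac1d\int_0^T\int_\Omega \pi_m^*(\overline P^{1/2})\,\mathcal G_m\cdot\nabla\varphi\,dxdt + o(1),
\end{align*}
where $\mathcal G_m$ is the reconstructed combination in Lemma \ref{lem.flux2}. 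The consistency error is $O(\eta_m)$ times $\|\pi_m^*(\overline P^{1/2})\mathcal G_m\|_{L^1}$, and this norm is bounded by Cauchy--Schwarz using Lemma \ref{lem.flux} and the mass bound. The factor $d$ cancels with the one in \eqref{2.grad}, and the diamond-cell orientation makes $\nabla_\sigma$ aligned with $\nu_{K,\sigma}$.

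\emph{Passage to the limit (main obstacle).} The product $\pi_m^*(\overline P^{1/2})\mathcal G_m$ is a product of two sequences each converging only in $L^2$: the first strongly (Lemma \ref{lem.P12}), the second weakly to $\Phi_P$ (Lemma \ref{lem.flux2}). Since $\nabla\varphi\in L^\infty$, the strong--weak pairing converges to
\begin{align*}
  \int_0^T\int_\Omega \sqrt{P^*}\,\Phi_P\cdot\nabla\varphi
  = \int_0^T\int_\Omega(\nabla P^*+P^*\nabla V^*)\cdot\nabla\varphi ,
\end{align*}
using $2\sqrt{P^*}\nabla\sqrt{P^*}=\nabla P^*$. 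This is $J_P$, which lies in $L^2(0,T;L^1)$ by the same product structure. The delicate points are two. First, the shift between $\varphi^{k-1}$ and $\varphi^k$ and between the cell value and the diamond value must be absorbed, using the bound on $\pi_m^*(\overline P^{1/2})\mathcal G_m$ in $L^1$. Second, the convergence must hold up to $\Gamma_N$ and, for $Q$, up to all of $\partial\Omega$. Lemma \ref{lem.flux2} was proved for compactly supported test functions, so I would first rederive its identification of $\Phi_P$ against $\varphi$ supported up to $\Gamma_N$. For this I would use the global weak $L^1$ convergence of $\nabla_m\sqrt P$ from the last lemma together with Lemma \ref{lem.B}, which identifies the weak $L^2$ limit as a distribution on the whole of $\Omega_T$. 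This global identification is the step I expect to require the most care.
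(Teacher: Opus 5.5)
Your proposal is correct and follows the paper's proof essentially step by step: you test the scheme with $\varphi(t_{k-1},x_K)$ and sum by parts in time, you rewrite the flux via the upwind form as $\overline P_\sigma^{1/2}$ times the refined combination of Lemma \ref{lem.flux2}, you control the consistency error with Lemma \ref{lem.cons} and the $L^1$ bound from Lemmas \ref{lem.flux} and \ref{lem.P12}, and you pass to the limit by the strong--weak $L^2$ pairing. The step you flag as the main obstacle is not actually needed: the weak $L^2(\Omega_T)$ limit $\Phi_P$ is already identified a.e.\ in $\Omega_T$ by testing against $C_0^\infty(\Omega_T)$, so the pairing with $\pi_m^*(\overline P^{1/2})\nabla\varphi\to\sqrt{P^*}\nabla\varphi$ (strongly in $L^2$) works for test functions reaching $\Gamma_N$, or for $Q$ all of $\pa\Omega$, with no global re-identification (note also that the $1/d$ in your displayed flux term should not be there, since it cancels against the $d$ produced by Lemma \ref{lem.cons}).
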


\begin{proof}
Let $\varphi$ be an admissible test function in the weak formulation of the equation for $N^*$. We set $\varphi_K^k:=\varphi(t_k,x_K)$ for $K\in\T_m$ and $k=0,\ldots,N_T^m$. Multiply the equation for $N$ in \eqref{3.NPQ} by $\Delta t_m\phi_K^{k-1}$, sum over $K\in\T_m$ and $k=1,\ldots,N_T^m$, and perform discrete integration by parts to find that $J_{8,m}+J_{9,m}=0$, where
\begin{align*}
  J_{8,m} &= \sum_{k=1}^{N_T^m}\sum_{K\in\T_m}\m(K)
  (N_K^k-N_K^{k-1})\varphi_K^{k-1}, \\
  J_{9,m} &= \sum_{k=1}^{N_T^m}\Delta t_m\sum_{\sigma=K|L\in\E_m}
  \tau_\sigma\D_{K,\sigma}\varphi^{k-1}
  \big(B(|\D_{K,\sigma} V^k|)\D_{K,\sigma}N^k - \widehat{N}_\sigma^k
  \D_{K,\sigma}V^k\big).
\end{align*}

We reformulate the first term by using summation by parts:
\begin{align*}
  J_{8,m} &= -\sum_{k=1}^{N_T^m}\sum_{K\in\T_m}\m(K)N_K^k
  (\varphi_K^k-\varphi_K^{k-1}) 
  - \sum_{K\in\T_m}\m(K)N_K^0\varphi_K^0 \\
  &= -\int_0^T\int_\Omega N_m\pa_t\varphi dxdt
  - \int_\Omega N_m^0\varphi(0,\cdot)dx + R_1, \quad\mbox{where} \\
  R_1 &:= \sum_{k=1}^{N_T^m}\sum_{K\in\T_m}\m(K)\int_{t_{k-1}}^{t_k}
  N_K^k\bigg(\pa_t\varphi(t,x) 
  - \frac{\varphi_K^k-\varphi_K^{k-1}}{\Delta t_m}\bigg)dt.
\end{align*}
Since $\varphi$ is smooth and $(N_m)$ is bounded in $L^1(\Omega_T)$, we find that
\begin{align*}
  |R_1|\le C\Delta t_m\|\pa_t^2\varphi\|_{L^\infty(\Omega_T)}
  \|N_m\|_{L^1(\Omega_T)}\to 0\quad\mbox{as }m\to\infty.
\end{align*}

We reformulate $J_{9,m}$ as
\begin{align*}
  J_{9,m} &= \int_0^T\int_\Omega L_m\cdot\na\varphi dxdt
  + R_2, \quad\mbox{where} \\
  L_m &:= \pi_m^*(\overline{N}^{1/2})\pi_m^*\bigg(
  2B(|\D_\sigma V_m|)\na^m\sqrt{N} 
  - \frac{\widehat{N}_m}{\overline{N}_m^{1/2}}\na^m V\bigg), \\
  R_2 &:= \frac{1}{d}\sum_{k=1}^{N_T^m}\sum_{\sigma\in\E_\sigma}
  \bigg(\Delta t_m\m(\Delta_\sigma)\na_\sigma^m\varphi^{k-1}
  - d\int_{t_{k-1}}^{t_k}\int_{\Delta_\sigma}\na\varphi dxdt\bigg),
  \cdot L_m^k,
\end{align*}
where $L_m^k$ denotes the value of $L_m$ on $(t_{k-1},t_k]\times\Delta_\sigma$. We infer from Lemma \ref{lem.cons} that 
\begin{align*}
  |R_2|\le C\eta_m\|\varphi\|_{W^{2,\infty}(\Omega_T)}
  \|L_m\|_{L^1(\Omega_T)}. 
\end{align*}
The $L^1(\Omega_T)$ norm of $L_m$ is bounded since, by Lemmas \ref{lem.flux} and \ref{lem.P12}, 
\begin{align*}
  \|L_m\|_{L^1(\Omega_T)} 
  &\le \big\|\pi_m^*\big(\overline{N}^{1/2}\big)\big\|_{L^2(\Omega_T)}
  \bigg\|\pi_m^*\bigg(2B(|\D_{K,\sigma}V_m|)\na^m \sqrt{N}
  - \frac{\widehat{N}_m}{\overline{N}_m^{1/2}}\na^m V
  \bigg)\bigg\|_{L^2(\Omega_T)} \\
  &\le C.
\end{align*}
We conclude that $R_2\to 0$ as $m\to\infty$. 

Combining the previous estimates, we arrive at
\begin{align}\label{5.aux3}
  -\int_0^T\int_\Omega N_m\pa_t\varphi dxdt 
  - \int_\Omega N_m^0\varphi(0,\cdot)dx
  + \int_0^T\int_\Omega L_m\cdot\na\varphi dxdt = o(1).
\end{align}
We know from Lemma \ref{lem.convgrad} that $N_m\to N^*$ strongly in $L^1(\Omega_T)$ and clearly $N_m^0\to N^{\rm in}$ strongly in $L^1(\Omega)$. Moreover, by Lemmas \ref{lem.P12} and \ref{lem.flux2}, 
\begin{align*}
  L_m\rightharpoonup \sqrt{N^*}\big(2\na\sqrt{N^*}
  - \sqrt{N^*}\na V^*\big) = \na N^* - N^*\na V^*
  \quad\mbox{weakly in }L^1(\Omega_T).
\end{align*}
Thus, the limit $m\to\infty$ implies that
\begin{align*}
  -\int_0^T\int_\Omega N^*\pa_t\varphi dxdt 
  - \int_\Omega N^{\rm in}\varphi(0,\cdot)dx
  + \int_0^T\int_\Omega(\na N^* - N^*\na V^*)\cdot\na\varphi dxdt = 0.
\end{align*}
This is the weak formulation of the first continuity equation in \eqref{1.eq}. The continuity equations for $P^*$ and $Q^*$ are derived in the same way.
\end{proof}

\begin{lemma}[Identification of the Poisson equation]
The limit function $(N^*,P^*,Q^*,$ $V^*)$ satisfies the Poisosn equation in \eqref{1.eq}.
\end{lemma}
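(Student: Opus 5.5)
The plan is to test the discrete Poisson equation \eqref{3.V} with a discretized test function and pass to the limit using the strong $L^1$ convergence of the densities (Lemma \ref{lem.convgrad}) and the weak* convergence of the potential gradients (Lemma \ref{lem.V}). We take $\psi\in C^\infty(\overline\Omega)$ vanishing in a neighborhood of $\Gamma_D$; such functions are dense in $H^1_D(\Omega)$. We also take $\theta\in C_0^\infty(0,T)$ and set $\zeta(t,x)=\theta(t)\psi(x)$ with $\zeta_K^k:=\zeta(t_k,x_K)$. For $m$ large, $\zeta$ vanishes on all cells adjacent to $\Gamma_D$, so the Dirichlet boundary edges give no contribution. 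On Neumann edges we have $\D_{K,\sigma}V^k=0$ by \eqref{3.bc}.

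\textbf{Step 1: discrete identity.} We multiply \eqref{3.V} by $\Delta t_m\zeta_K^k$, sum over $K\in\T_m$ and $k$, and apply \eqref{2.dibp}. This gives
\begin{align*}
  -\lambda^2\sum_{k}\Delta t_m\sum_{\sigma=K|L}\tau_\sigma\D_{K,\sigma}V^k\D_{K,\sigma}\zeta^k
  = \sum_k\Delta t_m\sum_K\m(K)(N_K^k-P_K^k-Q_K^k+A_K)\zeta_K^k .
\end{align*}
By \eqref{2.mdelta} and \eqref{2.grad}, we have $\tau_\sigma\D_{K,\sigma}V\D_{K,\sigma}\zeta=\frac1d\m(\Delta_\sigma)\na_\sigma V\cdot\na_\sigma\zeta$. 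The left-hand side therefore equals $-\frac{\lambda^2}{d}\int_0^T\int_\Omega\na_mV\cdot\na_m\zeta\,dxdt$ up to the time-sampling error of $\zeta$, which is $O(\Delta t_m)$ times $\|\na_mV\|_{L^1}$.

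\textbf{Step 2: the gradient term.} By Lemma \ref{lem.cons}, $\frac1d\na_m\zeta$ differs from the cell averages of $\na\zeta$ over the diamonds by $O(\eta_m)$ in $L^\infty$. The sequence $(\na_mV)$ is bounded in $L^2(\Omega_T)$ by Lemma \ref{lem.est1}, so replacing $\frac1d\na_m\zeta$ by $\na\zeta$ costs $o(1)$. Lemma \ref{lem.V} then yields convergence to $-\lambda^2\int_0^T\int_\Omega\na V^*\cdot\na\zeta$.

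Keeping track of the factor $d$ is the one place requiring care. The normalisation in \eqref{2.grad} is such that $\sum_\sigma\m(\Delta_\sigma)\na_\sigma u\cdot\na_\sigma v = d\sum_\sigma\tau_\sigma\D u\D v$. With the convention of Lemma \ref{lem.cons}, the factors combine consistently.

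\textbf{Step 3: the right-hand side.} $N_m,P_m,Q_m$ converge strongly in $L^1(\Omega_T)$, and $\pi_m\zeta\to\zeta$ uniformly. The piecewise constant $A_K$ converge to $A$ in $L^2(\Omega)$. Hence the right-hand side tends to $\int_0^T\int_\Omega(N^*-P^*-Q^*+A)\zeta$.

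\textbf{Step 4: conclusion.} The argument so far gives the equation integrated in time against every $\theta$. Since $\theta$ is arbitrary, the identity holds for a.e.\ $t$, up to the sign convention in Definition \ref{def.weak}, which is consistent with \eqref{3.V} after integration by parts. Density of smooth $\psi$ in $H^1_D(\Omega)$ extends it to all $\psi\in H^1_D(\Omega)$, using $N^*(t)\in L^1$ only through smooth $\psi$ first. The Dirichlet trace $V^*-V^D\in L^\infty(0,T;H^1_D)$ follows from applying the same weak-limit argument to $V_m-\pi_mV^D$, whose discrete $H^1$ seminorm is bounded and whose boundary values vanish on $\E^D_{\rm ext}$, as in \cite{CLP03}.

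The main obstacle I expect is the extension to general $\psi\in H^1_D(\Omega)$. The charge $N^*-P^*-Q^*$ is only in $L^1$ (indeed $L\log L$) in space, so $\int N^*\psi$ need not make sense for merely $H^1$ test functions when $d\ge3$. I would first establish the identity for $\psi\in W^{1,\infty}$ vanishing near $\Gamma_D$. For $d\le 2$, or where $H^1\subset$ Orlicz dual of $L\log L$ holds, I would extend it by density via the $L\log L$ bound from Lemma \ref{lem.est1}. Otherwise I would interpret the definition for smooth $\psi$.
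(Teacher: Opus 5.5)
Your proposal is correct and follows the paper's own argument. You test \eqref{3.V} with point values of a smooth function, rewrite $\sum_\sigma\tau_\sigma\D_{K,\sigma}V^k\D_{K,\sigma}\zeta^k$ through the diamond gradients, absorb the consistency error with Lemma~\ref{lem.cons} and the $L^2$ bound on $\na_mV$, and pass to the limit with Lemmas~\ref{lem.convgrad} and~\ref{lem.V} and $\pi_mA\to A$. Your explicit handling of the boundary edges, the separated test functions $\theta(t)\psi(x)$ giving the identity for a.e.\ $t$, and your caveat that $\int_\Omega N^*(t)\psi\,dx$ need not exist for unbounded $\psi\in H^1_D(\Omega)$ when $d\ge 3$ are all points the paper passes over.

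One caution, shared with the paper: the vector statement you quote from Lemma~\ref{lem.cons} is only true for the normal component, since $\na_\sigma\zeta^k$ is parallel to $\nu_{K,\sigma}$ and does not see the tangential part of the diamond average of $\na\zeta$. This is all you need, because $\na_\sigma V^k$ is also parallel to $\nu_{K,\sigma}$. Hence $\frac1d\na_\sigma\zeta^k\cdot\na_\sigma V^k=\na\zeta(t,x)\cdot\na_\sigma V^k+O(\eta_m)|\na_\sigma V^k|$ on $(t_{k-1},t_k]\times\Delta_\sigma$, and your Step 2 goes through as written.
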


\begin{proof}
Let $\psi$ be a test function in the weak formulation of the Poisson equation and set $\psi_K^k:=\psi(t_k,x_K)$ for $K\in\T_m$, $k=0,\ldots,N_T^m$. We multiply the discrete Poisson equation \eqref{3.V} by $\Delta t_m\psi_K$ and sum over $K\in\T_m$, $k=1,\ldots,N_T^m$. Then $J_{10,m}+J_{11,m}=0$, where
\begin{align*}
  J_{10,m} 
  &:= \lambda^2\sum_{k=1}^{N_T^m}\Delta t_m\sum_{\sigma=K|L\in\E_m}
  \tau_\sigma\D_{K,\sigma}\psi^k\D_{K,\sigma}V^k, \\
  J_{11,m} &:= \sum_{k=1}^{N_T^m}\Delta t_m\sum_{K\in\T_m}\m(K)
  \psi_K(N_K^k-P_K^k-Q_K^k+A_K).
\end{align*}
The second term becomes
\begin{align*}
  & J_{11,m} = \int_0^T\int_\Omega(N_m-P_m-Q_m+\pi_m A)\psi dxdt + R_3,
  \quad\mbox{where} \\
  & |R_3|\le C\eta_m\|\psi\|_{W^{1,\infty}(\Omega_T)}
  \big(\|N_m\|_{L^1(\Omega_T)} + \|P_m\|_{L^1(\Omega_T)}
  + \|Q_m\|_{L^1(\Omega_T)} + \|A\|_{L^1(\Omega)}\big),
\end{align*}
and consequently $R_3\to 0$ as $m\to\infty$. For the first term, we write
\begin{align*}
  & J_{10,m} = \lambda^2\int_0^T\int_\Omega\na_m V\cdot\na\psi dxdt
  + R_4, \quad\mbox{where} \\
  & R_4 := \frac{\lambda^2}{d}\sum_{k=1}^{N_T^m}\sum_{\sigma\in\E_m}
  \bigg(\Delta t_m\m(\Delta_\sigma)\na_\sigma^m\psi^k
  - d\int_{t_k}^{t_{k+1}}\int_{\Delta_\sigma}\na\psi dxdt\bigg)
  \cdot\na_\sigma^m V^k.
\end{align*}
It follows from Lemma \ref{lem.cons} and the uniform $L^2(\Omega_T)$ bound for $\na_m V$ that
\begin{align*}
  |R_4|\le C\eta_m\|\psi\|_{W^{2,\infty}(\Omega_T)}
  \|\na_m V\|_{L^1(\Omega_T)}\to 0 \quad\mbox{as }m\to\infty.
\end{align*}

The previous arguments show that 
\begin{align*}
  \lambda^2\int_0^T\int_\Omega\na_m V\cdot\na\psi dxdt
  + \int_0^T\int_\Omega(N_m-P_m-Q_m+\pi_m A)\psi dxdt = o(1).
\end{align*}
The convergence results in Lemmas \ref{lem.convgrad} and \ref{lem.V} as well as the convergence $\pi_m A\to A$ strongly in $L^2(\Omega)$ yield in the limit $m\to\infty$ that
\begin{align*}
  \lambda^2\int_0^T\int_\Omega\na V^*\cdot\na\psi dxdt
  + \int_0^T\int_\Omega(N^*-P^*-Q^*+A)\psi dxdt = 0,
\end{align*}
which is the weak formulation of the Poisson equation. 
\end{proof}

It remains to identify the Dirichlet boundary conditions. 

\begin{lemma}[Dirichlet boundary conditions]
It holds that
\begin{align*}
  & (N^*)^{1/2} - (N^D)^{1/2}, \quad
  (P^*)^{1/2} - (P^D)^{1/2}\in L^1(0,T;W_D^{1,1}(\Omega)), \\
  & V^*-V^D\in L^\infty(0,T;H_D^1(\Omega)). 
\end{align*}
\end{lemma}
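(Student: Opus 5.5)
The plan is to prove the traces by the standard finite-volume device: extend the discrete functions by their Dirichlet values in a neighbourhood outside $\Omega$ near $\Gamma_D$, and show that the extended gradients converge weakly. The limit's extension then has no jump across $\Gamma_D$.

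\emph{Step 1: extension of $\Omega$.} Fix a bounded open set $\widetilde\Omega\supset\Omega$ with $\widetilde\Omega\cap\pa\Omega\subset\Gamma_D\cup\Omega$, that is, the enlargement only crosses the Dirichlet part. Extend $N^D,P^D,V^D$ to $\widetilde\Omega$; by (H3) we may assume $\sqrt{N^D},\sqrt{P^D}\in W^{1,1}$ and $V^D\in H^1$ there. Define $\widetilde N_m:=\sqrt{N_m}$ in $\Omega_T$ and $\widetilde N_m:=\sqrt{N^D}$ in $(0,T)\times(\widetilde\Omega\setminus\Omega)$.

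\emph{Step 2: weak gradient of the extension.} For $\varphi\in C_0^\infty((0,T)\times\widetilde\Omega;\R^d)$, discrete integration by parts on $\T_m$ gives
\[
\int\widetilde N_m\,\diver\varphi
= -\int_0^T\!\!\int_\Omega\na_m\sqrt N\cdot\varphi
-\int_0^T\!\!\int_{\widetilde\Omega\setminus\Omega}\na\sqrt{N^D}\cdot\varphi+R_m .
\]
Here the Dirichlet edges contribute the difference $\sqrt{N_\sigma^D}-\sqrt{N^D}$ on $\sigma$. The remainder $R_m$ collects consistency errors of the type in Lemma \ref{lem.cons}. It is bounded by $C\eta_m\|\varphi\|_{W^{1,\infty}}$ times $\sum_k\Delta t\sum_\sigma\m(\sigma)|\D_{K,\sigma}\sqrt N|$, plus the boundary average error. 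The key input is the global $L^1$ bound \eqref{4.L1grad}, which includes the Dirichlet edges $\sigma\in\E_{\rm ext}^D$ with $\D_{K,\sigma}\sqrt N=\sqrt{N^D_\sigma}-\sqrt{N_K}$. Hence $R_m\to0$.

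Passing to the limit requires two facts:
\begin{itemize}
\item $\sqrt{N_m}\to\sqrt{N^*}$ strongly in $L^1$, which holds by Lemma \ref{lem.convgrad} and $|\sqrt a-\sqrt b|\le|a-b|^{1/2}$;
\item $\na_m\sqrt N\rightharpoonup\na\sqrt{N^*}$ weakly in $L^1(\Omega_T)$, which is the previous lemma.
\end{itemize}
The limit identity shows that the extension of $\sqrt{N^*}$ by $\sqrt{N^D}$ lies in $L^1(0,T;W^{1,1}(\widetilde\Omega))$. Its trace from inside $\Omega$ on $\Gamma_D$ must therefore equal that from outside, namely $\sqrt{N^D}$. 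This gives $\sqrt{N^*}-\sqrt{N^D}\in L^1(0,T;W^{1,1}_D(\Omega))$. The argument for $P$ is identical.

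\emph{Step 3: the potential.} For $V$ we argue in the same way with the $L^2$ bound of Lemma \ref{lem.est1}(ii), which controls $\sum_\sigma\tau_\sigma|\D_{K,\sigma}V^k|^2$ including Dirichlet edges. The extension of $V^*$ by $V^D$ then belongs to $L^\infty(0,T;H^1(\widetilde\Omega))$, and lower semicontinuity gives the $L^\infty$ in time bound.

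\emph{Main obstacle.} The delicate part is Step 2 at Dirichlet edges. We only have $L^1$ control of the square-root gradient there, not $L^2$. So we must check that the term $\sum\m(\sigma)|\sqrt{N^D_\sigma}-\sqrt{N_K}|\,|\varphi(x_\sigma)-\varphi(x_K)|$ is $O(\eta_m)$ using \eqref{4.L1grad} alone. We also need weak $L^1$ (rather than merely bounded) convergence of the gradients, so that the limit is a function and not a measure. The latter is precisely the content of the preceding lemma.
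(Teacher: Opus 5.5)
Your proposal is correct and follows essentially the paper's route. You extend across $\Gamma_D$ only (extending $\sqrt{N_m}$ by $\sqrt{N^D}$ is equivalent to the paper's zero extension of $Z_m=N_m^{1/2}-(\pi_mN^D)^{1/2}$, and likewise for $V$), then pass to the limit using the strong convergence of $\sqrt{N_m}$, the global weak $L^1$ convergence of $\na_m\sqrt{N}$ from the preceding lemma, and the $L^2$ bound on $\na_m V$ from Lemma~\ref{lem.est1}; where the paper only invokes ``the standard finite-volume extension argument,'' you spell it out by discrete integration by parts with the Dirichlet edges controlled by \eqref{4.L1grad}, and you rightly stress that weak (not merely bounded) $L^1$ convergence of the gradients is what rules out a singular jump part on $\Gamma_D$.
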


\begin{proof}
The sequence $W_m:=V_m-\pi_m V^D$ vanishes on the Dirichlet boundary and is uniformly bounded in $L^\infty(0,T;H^1(\Omega))$ by Lemma \ref{lem.est1}. Extending $W_m$ by zero across $\Gamma_D$ and using the standard finite-volume extension argument, we infer that, up to a subsequence,
\begin{align*}
  W_m\rightharpoonup W\quad\mbox{weakly* in }L^\infty(0,T;H^1(\Omega'))
\end{align*} 
for a slightly larger domain $\Omega'\supset\Omega$ with $W=0$ outside of $\Omega$ near $\Gamma_D$. We conclude from $W_m\rightharpoonup V^*-V^D$ on $\pa\Omega$ that $V^*-V^D\in L^\infty(0,T;H_D^1(\Omega))$. 

Next, we consider the boundary values for the densities. The function $Z_m=N_m^{1/2}-(\pi_m N^D)^{1/2}$ vanishes on $\Gamma_D$. We know from Lemma \ref{lem.convgrad} that $N_m^{1/2}$ converges strongly in $L^2(\Omega_T)$, and Lemma \ref{lem.P12} gives the weak convergence of their gradients in $L^1(\Omega_T)$. Extending $Z_m$ by zero across $\Gamma_D$ and applying the same trace argument as before, we infer that $(N^*)^{1/2}-(N^D)^{1/2}\in L^1(0,T;W^{1,1}_D(\Omega))$. Similarly, it follows that $(P^*)^{1/2}-(P^D)^{1/2}\in L^1(0,T;W^{1,1}_D(\Omega))$.
\end{proof}


\section{Numerical experiments}\label{sec.num}

We illustrate the behavior of the numerical scheme by performing some experiments. Our objective is not to develop a quantitatively accurate physical model, but rather to illustrate the qualitative behavior of the oxygen-vacancy density.

We consider the two-dimensional square $\Omega=(0,1)^2$ and choose a uniform grid of $2\times 80\times 80 = 12,800$ triangles. Taking the physical parameters as in \cite[Sec.~6]{JJZ23}, the scaled Debye length becomes $\lambda^2 = 2.86\cdot 10^{-4}$, corresponding to a silicon device with the size $50\,\mbox{nm}\times 50\,\mbox{nm}$. The other physical parameters are:
\begin{itemize}
\item Doping profile: The bulk semiconductor is undoped with $A=0$, while we take $A=10$ close to the Ohmic contacts; see Figure \ref{fig.dev}. 
\item Vacancy profile: Two highly doped vacancy seeds are placed in the middle of the bulk with $Q=500$, while $Q=50$ close to the contacts and $Q=0$ elsewhere.
\item Applied bias: The electric potential equals $U_0=3$ at the bottom electrode and $U_0=2$ at the top electrode, corresponding to a rather small 26\,mV potential difference.
\end{itemize}
We observe that the inclusion of oxygen seeds is common in experimental studies \cite{KCD26}. The Dirichlet data and the built-in potential are taken as in \cite[Sec.~6]{JJZ23}. The initial state is defined by assuming local charge neutrality ($N - P - Q + A = 0$) and thermal equilibrium ($N P = 1$), which avoids nonphysical transients at $t = 0$. 

\begin{figure}[ht]
    \centering
    \begin{tikzpicture}[scale=5]
        \draw[thick] (0,0) rectangle (1,1);
        \fill[blue!10] (0,0.8) rectangle (0.5,1);
        \fill[blue!10] (0,0) rectangle (1,0.15);
        \fill[red!10] (0.5, 0.6) ellipse (0.1 and 0.05);
        \fill[red!10] (0.5, 0.4) ellipse (0.1 and 0.05);
        \draw[blue] (0,0.8) -- (0.5,0.8) -- (0.5,1);
        \draw[blue] (0,0.15) -- (1,0.15);
        \draw[thick, red] (0.5, 0.6) ellipse (0.1 and 0.05);
        \draw[thick, red] (0.5, 0.4) ellipse (0.1 and 0.05);
        \node[align=center] at (0.25, 0.9) {\small $A=10$\\ \small $Q=50$};
        \node at (0.5, 0.075) {$A=10, Q=50$};
        \node at (0.7, 0.25) {$A=0, Q=0$};
        \node[right] at (0.65, 0.5) {$Q=500$};
        \draw[->, thick, shorten >=2pt] (0.65, 0.5) -- (0.58, 0.58);
        \draw[->, thick, shorten >=2pt] (0.65, 0.5) -- (0.58, 0.42);
        \draw[very thick, fill=gray!20] (0.1, 1) rectangle (0.4, 1.05);
        \draw[thick] (0.25, 1.05) -- (0.25, 1.1);
        \draw[very thick, fill=gray!20] (0.2, 0) rectangle (0.8, -0.05);
        \draw[thick] (0.5, -0.05) -- (0.5, -0.1);
    \end{tikzpicture}
\caption{Schematic presentation of the doping profile $A(x)$ and initial oxygen vacancy density $Q(x)$, featuring two localized high-concentration vacancy seeds in the bulk.}
\label{fig.dev}
\end{figure}
Because of the small Debye length, the fully implicit nonlinear discrete system \eqref{3.NPQ}--\eqref{3.bc}, which needs to be solved in every time step, is highly stiff such that a monolithic implicit solver would be computationally very expensive. Therefore, at each time step, we solve equations \eqref{3.NPQ}--\eqref{3.bc} by a Gummel iteration. More precisely, we stabilize the discrete Poisson equation by solving at the $\ell$th iteration
\begin{align*}
  -\lambda^2&\sum_{\sigma\in\E_K}\tau_\sigma\D_{K,\sigma}V^{(\ell+1)}
  + \m(K)\big(N_K^{(\ell)}+P_K^{(\ell)}+Q_K^{(\ell)}\big)
  V_K^{(\ell+1)} \\
  &= - \m(K)\big(N_K^{(\ell)}-P_K^{(\ell)}-Q_K^{(\ell)}+A_K\big)
  + \m(K)\big(N_K^{(\ell)}+P_K^{(\ell)}+Q_K^{(\ell)}\big)V_K^{(\ell)}.
\end{align*} 

This adjustment ensures that the system matrix is diagonally dominant, successfully suppressing spurious oscillations in the electrostatic field. The $(\ell+1)$th iteration of the electron density is computed as follows:
\begin{align*}
  \frac{\m(K)}{\Delta t}(N^{(\ell+1)}-N^k) 
  - \sum_{\sigma\in\E_K}\F_{K,\sigma}(N^{(\ell+1)},-V^{(\ell+1)}) = 0,
\end{align*}
with an analogous approach for $P^{(\ell+1)}$ and $Q^{(\ell+1)}$. The iteration repeats until the relative difference is smaller than $10^{-3}$ or the 200th iteration is reached.

The simulation starts with the very small initial time step size $\Delta t = 10^{-7}$ to ensure that the Gummel fixed-point solver converges during the initial highly nonlinear transient phase, where the gradients are very steep. After the first 5 time steps, $\Delta t$ dynamically increases by 10\% per time step to improve computational efficiency, capped at a maximum of $\Delta t_{\rm max} = 5\cdot 10^{-5}$.

First, we verify our implementation by computing the numerical convergence rate in one space dimension. We use the configuration shown in Figure \ref{fig.one} and the simulation time $T=0.1$. The numerical solution $Q^{[N]}$ is computed on uniform grids with $N\in\{50,100,200,400,800\}$ spatial cells. The discrete error is evaluated against a reference solution, calculated with $N=2500$ cells. The reference solution $Q_{\rm ref}$ is linearly interpolated onto the cell centers of each coarser mesh. The $L^1$-error is defined by ${\rm err}_N = \sum_{K\in\T_h}\m(K)|Q_K^{[N]}-Q_{\rm ref}(x_K)|$. Figure \ref{fig.rate} shows that the spatial convergence rate is approximately one as expected. 

\begin{figure}[htbp]
    \centering
    \begin{tikzpicture}[x=12cm, y=1.2cm] 
        \draw[thick, fill=white] (0,0) rectangle (1,1);
        \fill[blue!10] (0,0) rectangle (0.15,1);
        \fill[blue!10] (0.85,0) rectangle (1,1);
        \fill[red!10] (0.35,0) rectangle (0.45,1);
        \fill[red!10] (0.55,0) rectangle (0.65,1);
        \draw[thick, blue] (0.15,0) -- (0.15,1);
        \draw[thick, blue] (0.85,0) -- (0.85,1);
        \draw[thick, red] (0.35,0) -- (0.35,1);
        \draw[thick, red] (0.45,0) -- (0.45,1);
        \draw[thick, red] (0.55,0) -- (0.55,1);
        \draw[thick, red] (0.65,0) -- (0.65,1);
        \draw[thick] (0,0) rectangle (1,1);
        \foreach \x in {0, 0.15, 0.35, 0.45, 0.55, 0.65, 0.85, 1} {
            \draw (\x, 0) -- (\x, -0.1) node[below, font=\scriptsize] {\x};
        }
        \node[below] at (0.5,-0.6) {Spatial coordinate $x$};
        \node[align=center] at (0.075, 0.5) {\scriptsize $A=10$\\ \scriptsize $Q=50$};
        \node[align=center] at (0.925, 0.5) {\scriptsize $A=10$\\ \scriptsize $Q=50$};
        \node[align=center] at (0.4, 0.5) {\scriptsize $Q=500$};
        \node[align=center] at (0.6, 0.5) {\scriptsize $Q=500$};
        \node[align=center] at (0.25, 0.5) {\scriptsize $A=0$\\ \scriptsize $Q=0$};
        \node[align=center] at (0.75, 0.5) {\scriptsize $A=0$\\ \scriptsize $Q=0$};
        \node[align=center] at (0.5, 0.5) {\scriptsize $A=0$\\ \scriptsize $Q=0$};
    \end{tikzpicture}
    \caption{Schematic presentation of the one-dimensional domain used for the spatial convergence analysis.}
    \label{fig.one}
\end{figure}

\begin{figure}[ht]
\includegraphics[width=80mm]{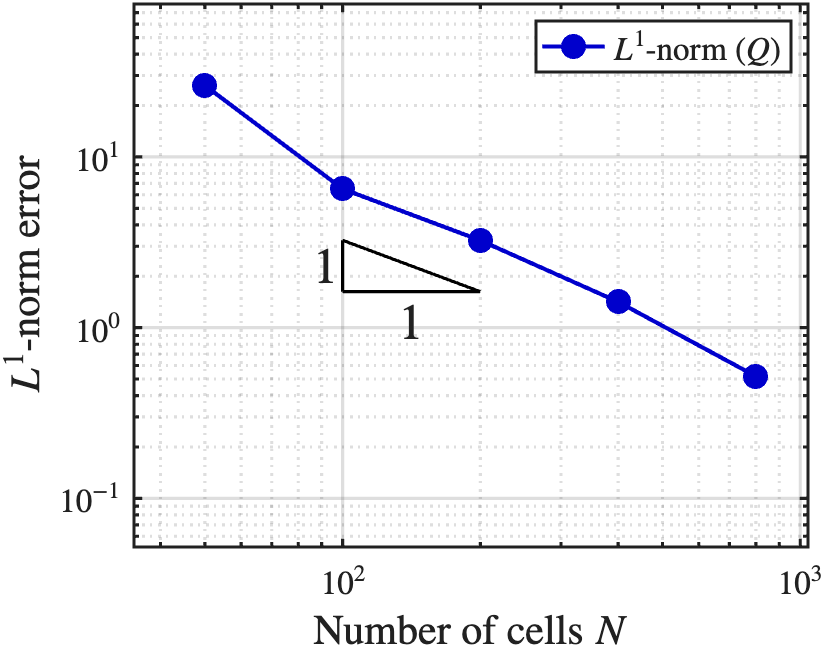}
\caption{Discrete $L^1$-error of the oxygen vacancy density $Q$ as a function of the number of spatial cells $n$ in a log-log plot.}
\label{fig.rate}
\end{figure}

Next, we compute the relative energy associated with the oxygen vacancy density $Q^k$ at discrete time $t_k=k\Delta t$,
\begin{align*}
  \mathcal{E}(t_k) = \sum_{K\in\T_h}\m(K)\bigg(Q_K^k
  \log\frac{Q_K^k}{Q^\infty} - Q_K^k + Q^\infty\bigg),
\end{align*}
where $Q^\infty$ is the equilibrium density. The discrete evolution of $\mathcal{E}(t_k)$ is shown in Figure \ref{fig.ent}. We observe that the numerical scheme preserves the monotonicity of the internal energy of the oxygen vacancy density. 

\begin{figure}[ht]
\includegraphics[width=120mm]{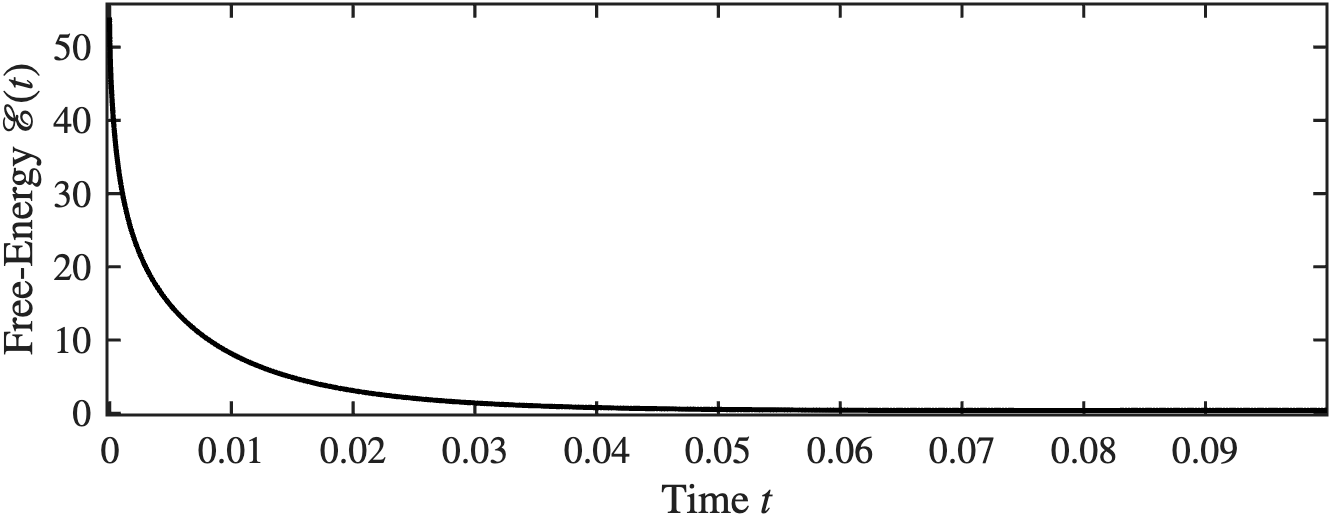}
\caption{Evolution of the relative free energy associated with the oxygen vacancy density.}
\label{fig.ent}
\end{figure}

Figure \ref{fig.oxy} illustrates the spatial distribution of the oxygen-vacancy density over time. The vacancies diffuse from the seed regions and form a high-density pathway connecting the top and bottom electrodes, which may be interpreted as a conductive filament analogous to those observed experimentally. Over time, however, this structure broadens due to diffusion, and the vacancy density converges towards a nonhomogeneous equilibrium state. The results indicate that a stable filament cannot be sustained within the present diffusion-only framework and that additional physical mechanisms would be required to stabilize the filamentary structure. More realistic simulations reported in the literature are typically based on atomistic approaches \cite{AKS26} or kinetic Monte--Carlo methods \cite{BrSh26}. Since the development of such models lies beyond the scope of the present work, we leave this extension for future research.

\begin{figure}[ht]
\includegraphics[width=150mm]{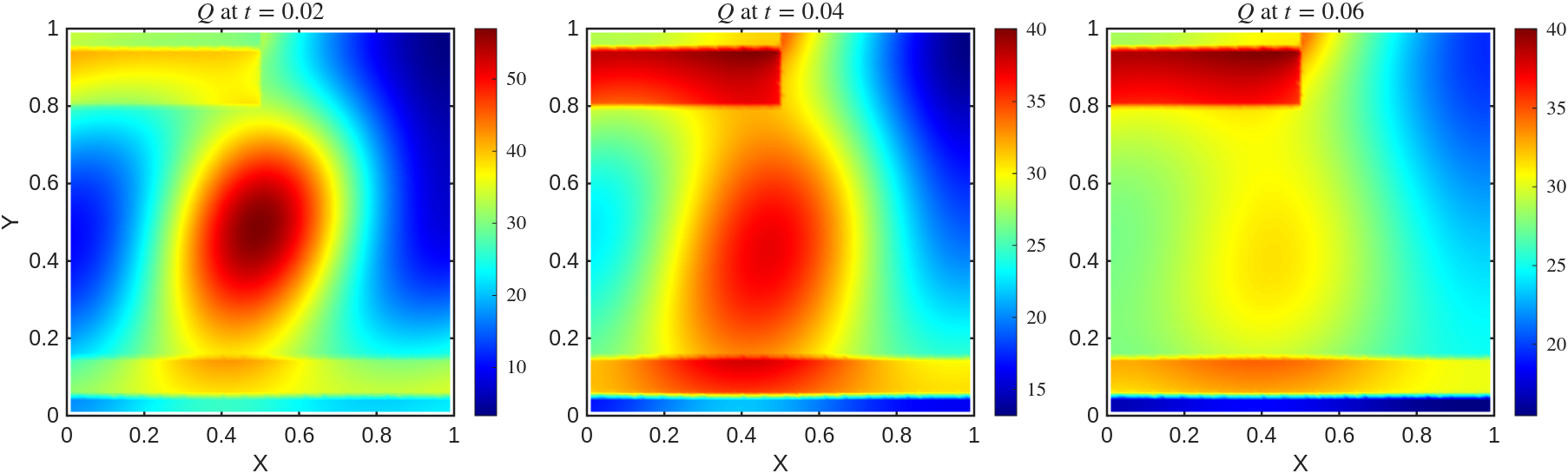}
\caption{Snapshots of the oxygen vacancy density $Q$ at times $t=0.02$ (left), $t=0.04$ (middle), and $t=0.06$ (right).}
\label{fig.oxy}
\end{figure}


\begin{appendix}
\section{Auxiliary results}\label{sec.app}

We first prove some consequences of the mesh regularity \eqref{2.meshreg}.

\begin{lemma}\label{lem.meshregul}
Let $(\T,\E,\mathcal X)$ be an admissible mesh satisfying \eqref{2.meshreg} and set $h_K:=\max_{\sigma\in\E_K}\dist_\sigma$. Then
\begin{align*}
  \m(\Delta_\sigma)\le \xi_1^{-1}\m(K), \quad
  \frac{\tau_\sigma h_K^2}{\m(K)}\le \frac{d}{\xi_1\xi_2^2}
  \quad\mbox{for }K\in\T,\ \sigma\in\E_K.
\end{align*}
\end{lemma}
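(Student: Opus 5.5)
The plan is to use the geometry of the dual cell $\Delta_\sigma$ together with the two inequalities in \eqref{2.meshreg}; no analytic input from the scheme is needed.

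\emph{First inequality.} Fix $K\in\T$ and $\sigma\in\E_K$. The dual cell $\Delta_\sigma$ splits into the half-diamond $\Delta_{K,\sigma}$, the convex hull of $\sigma\cup\{x_K\}$, and, for an interior edge, the companion $\Delta_{L,\sigma}$. Since $\overline{x_Kx_L}\perp\sigma$, we have $d\,\m(\Delta_{K,\sigma})=\m(\sigma)\,d(x_K,\sigma)$. Combining this with \eqref{2.mdelta} and the first inequality in \eqref{2.meshreg} gives
\begin{align*}
  d\,\m(\Delta_\sigma)=\m(\sigma)\dist_\sigma\le \xi_1^{-1}\m(\sigma)\,d(x_K,\sigma)
  = \xi_1^{-1}d\,\m(\Delta_{K,\sigma}).
\end{align*}
The half-diamonds $\Delta_{K,\sigma'}$, $\sigma'\in\E_K$, are pairwise disjoint subsets of $K$, because $K$ is convex and, for admissible meshes, $x_K\in\overline K$. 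Hence $\m(\Delta_{K,\sigma})\le\m(K)$, and the first claim $\m(\Delta_\sigma)\le\xi_1^{-1}\m(K)$ follows. For a boundary edge, $\dist_\sigma=d(x_K,\sigma)$ and the bound is immediate.

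\emph{Second inequality.} The second inequality in \eqref{2.meshreg} gives $h_K\le\xi_2^{-1}\dist_\sigma$, so
\begin{align*}
  \tau_\sigma h_K^2 \le \xi_2^{-2}\,\frac{\m(\sigma)}{\dist_\sigma}\,\dist_\sigma^2
  = \xi_2^{-2}\m(\sigma)\dist_\sigma = d\,\xi_2^{-2}\m(\Delta_\sigma)
  \le \frac{d}{\xi_1\xi_2^2}\m(K),
\end{align*}
where the last step uses the first part. Dividing by $\m(K)$ gives the second claim.

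\emph{Main point requiring care.} The only delicate step is the identity $d\,\m(\Delta_{K,\sigma})=\m(\sigma)\,d(x_K,\sigma)$ together with the inclusion $\Delta_{K,\sigma}\subset K$. Both rely on the admissibility convention that $x_K$ lies in $\overline K$, or at least that these half-cones do not overlap. If the mesh definition allows $x_K$ outside $K$, I would instead state this as an assumption consistent with \cite[Definition 9.1]{EGH00} as used in the paper.
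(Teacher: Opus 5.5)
Your proof is correct and follows the paper's argument: the half-diamond identity $d\,\m(\Delta_{K,\sigma})=\m(\sigma)d(x_K,\sigma)$ together with $\m(\Delta_{K,\sigma})\le\m(K)$ gives the first bound, and the second follows from $h_K\le\xi_2^{-1}\dist_\sigma$ (the paper combines the same three facts, only in a slightly different order). Your caveat is unnecessary, because \cite[Definition 9.1]{EGH00} explicitly requires $x_K\in\overline K$.
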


\begin{proof}
Denoting by $\Delta_{K,\sigma}$ the half-diamond associated with $K$ and $\sigma$, we have
\begin{align}\label{a.aux}
  \m(\Delta_{K,\sigma}) = \frac{1}{d}\m(\sigma)d(x_K,\sigma), \quad
  \sum_{\sigma\in\E_K}\m(\Delta_{K,\sigma}) = \m(K).
\end{align}
Hence, because of \eqref{2.mdelta} and \eqref{2.meshreg},
\begin{align*}
  d\m(\Delta_\sigma) = \m(\sigma)\dist_\sigma
  \le\xi_1^{-1}\m(\sigma)d(x_K,\sigma) 
  = d\xi_1^{-1}\m(\Delta_{K,\sigma})
  \le d\xi_1^{-1}\m(K),
\end{align*}
which proves the first statement. Next, we use \eqref{a.aux} and then \eqref{2.meshreg} to estimate
\begin{align*}
  \frac{\tau_\sigma h_K^2}{\m(K)} 
  = \frac{\m(\sigma)}{\dist_\sigma}\frac{h_K^2}{\m(K)} 
  \le \frac{\m(\sigma)}{\dist_\sigma}\frac{h_K^2}{\m(\Delta_{K,\sigma})}
  = \frac{d}{\dist_\sigma}\frac{h_K^2}{d(x_K,\sigma)}
  \le \frac{d}{\xi_1}\frac{h_K^2}{\dist_\sigma^2}
  \le \frac{d}{\xi_1\xi_2^2},
\end{align*}
which shows the second statement.
\end{proof}

Next, we construct the discrete enlarged cutoff function used in the proof of Lemma \ref{lem.first}. 

\begin{lemma}\label{lem.enlarge}
Let $\chi^\eps\in C_0^\infty(\Omega_D^\eps)$ be as defined in Section \ref{sec.chi}. Set $\chi_K^\eps=\chi^\eps(x_K)$ and $\chi_\sigma^\eps=\chi^\eps(x_\sigma)$ for $K\in\T$, $\sigma\in\E_{\rm ext}$, where $x_\sigma=\m(\sigma)\int_\sigma xdx$. We construct the enlarged cutoff $\widetilde\chi^\eps$ by
\begin{align*}
  (\widetilde\xi_K^\eps)^2 := \max\Big\{(\chi_K^\eps)^2,
  \max_{\sigma\in\E_K}(\chi^\eps_{K,\sigma})^2\Big\},
\end{align*}
which satisfies estimates \eqref{5.enlarge1}--\eqref{5.enlarge2}. 
\end{lemma}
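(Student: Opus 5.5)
We check \eqref{5.enlarge1} and \eqref{5.enlarge2} directly from the definition. The typo $\widetilde\xi$ is read as $\widetilde\chi$, and we take $\widetilde\chi_K^\eps\ge 0$ as the nonnegative square root.

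\textbf{Inequality \eqref{5.enlarge1}.} This is immediate, since the maximum defining $(\widetilde\chi_K^\eps)^2$ contains $(\chi_K^\eps)^2$ as one of its entries.

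\textbf{The two quantities in \eqref{5.enlarge2}.} Both are controlled by one elementary estimate. For $\sigma\in\E_K$, write
\begin{align*}
  |\D_{K,\sigma}(\chi^\eps)^2| = |\chi^\eps_{K,\sigma}-\chi^\eps_K|\,(\chi^\eps_{K,\sigma}+\chi^\eps_K).
\end{align*}
By \eqref{5.chieps}, the first factor is at most $C\eps^{-1}\dist_\sigma\le C\eps^{-1}h_K$. Here we use that $\chi^\eps$ is smooth with $|\na\chi^\eps|\le C/\eps$, and that $|x_{K,\sigma}-x_K|=\dist_\sigma$, with $x_\sigma$ used for boundary edges; up to a constant, $d(x_K,x_\sigma)$ is controlled by $\dist_\sigma$ through the mesh regularity. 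The second factor is at most $2\widetilde\chi_K^\eps$, because both $\chi_K^\eps$ and $\chi^\eps_{K,\sigma}$ lie in the set over which the maximum is taken. This gives
\begin{align*}
  |\D_{K,\sigma}(\chi^\eps)^2|\le C\eps^{-1}\widetilde\chi_K^\eps h_K .
\end{align*}

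For the second term, the maximum is attained at $\chi_K^\eps$ or at some neighbour value $\chi^\eps_{K,\sigma'}$ with $\sigma'\in\E_K$. Hence
\begin{align*}
  0\le (\widetilde\chi_K^\eps)^2-(\chi_K^\eps)^2\le \max_{\sigma'\in\E_K}|\D_{K,\sigma'}(\chi^\eps)^2|,
\end{align*}
and the bound just proved applies with $\sigma'$ in place of $\sigma$. Adding the two estimates yields \eqref{5.enlarge2}.

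\textbf{Main obstacle.} The only delicate point is getting the factor $\widetilde\chi_K^\eps$ rather than a constant. This is exactly why the enlargement is needed: at a cell with $\chi_K^\eps=0$ but a neighbour with $\chi^\eps_L>0$, the enlarged value $\widetilde\chi_K^\eps$ is positive and absorbs the transition. A second point requiring care is that the Lipschitz bound on $\chi^\eps$ covers boundary edges. This holds because $x_\sigma$ lies within distance $\le h_K$ of $x_K$ (by convexity of $K$ and \eqref{2.meshreg}), so $|\chi^\eps_\sigma-\chi^\eps_K|\le \|\na\chi^\eps\|_\infty |x_\sigma-x_K|\le C\eps^{-1}h_K$.
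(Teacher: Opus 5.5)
Your proof is correct and follows the paper's argument. Inequality \eqref{5.enlarge1} is read off from the definition, and both parts of \eqref{5.enlarge2} come from $|\D_{K,\sigma}(\chi^\eps)^2| = (\chi^\eps_{K,\sigma}+\chi^\eps_K)|\D_{K,\sigma}\chi^\eps|$ together with $\chi^\eps_{K,\sigma}+\chi^\eps_K\le 2\widetilde\chi^\eps_K$ and \eqref{5.chieps}. Your reduction of $(\widetilde\chi_K^\eps)^2-(\chi_K^\eps)^2$ to $\max_{\sigma'\in\E_K}|\D_{K,\sigma'}(\chi^\eps)^2|$ is only a cosmetic variant of the paper's factorization $(\widetilde\chi_K^\eps+\chi_K^\eps)|\widetilde\chi_K^\eps-\chi_K^\eps|$.

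One caveat concerns your side justification of \eqref{5.chieps} on boundary edges. Convexity and \eqref{2.meshreg} do not give $|x_\sigma-x_K|\lesssim h_K$ in general: a cell with a long boundary face can have that face's barycenter far from $x_K$ relative to $h_K$. This is harmless, because the paper simply assumes \eqref{5.chieps} and your proof only needs that assumption.
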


\begin{proof}
Estimate \eqref{5.enlarge1} follows from the definition of $\widetilde\chi^\eps$. To prove \eqref{5.enlarge2}, we deduce from
\begin{align*}
  |\D_{K,\sigma}(\chi^\eps)^2|
  = |(\chi^\eps_{K,\sigma})^2 - (\chi^\eps_K)^2|
  = (\chi^\eps_{K,\sigma}+\chi^\eps_K)|\D_{K,\sigma}\chi^\eps|,
  \quad \chi_{K,\sigma}^\eps+\chi_K^\eps\le 2\widetilde\chi^\eps_K,
\end{align*}
and inequality \eqref{5.chieps} that
\begin{align*}
  |\D_{K,\sigma}(\chi^\eps)^2| \le 2\widetilde\chi_K^\eps  
  |\D_{K,\sigma}\chi^\eps| \le C\eps^{-1}\widetilde\chi_K^\eps h_K,
\end{align*}
using $\dist_\sigma\le h_K$ in the last step. Next, we estimate
\begin{align*}
  |(\widetilde\chi_K^\eps)^2 - (\chi_K^\eps)^2|
  &= (\widetilde\chi_K^\eps 
  + \chi^\eps_K)|\widetilde\chi^\eps_K-\chi^\eps_K|
  \le 2\widetilde\chi_K^\eps|\widetilde\chi^\eps_K-\chi^\eps_K| \\
  &\le 2\widetilde\chi_K^\eps\max_{\sigma\in\E_K}|\D_{K,\sigma}\chi^\eps|
  \le C\eps^{-1}\widetilde\chi_K^\eps h_K,
\end{align*}
finishing the proof.
\end{proof}

\end{appendix}


\end{document}